\title{Generating Residual Intersections of Determinantal Ideals}
\author{Yevgeniya Tarasova}
\date{}
\DeclareMathOperator{\depth}{depth}
\DeclareMathOperator{\height}{ht}
\DeclareMathOperator{\grade}{grade}
\DeclareMathOperator{\Ext}{Ext}
\DeclareMathOperator{\Hom}{Hom}
\DeclareMathOperator{\bideg}{bideg}
\DeclareMathOperator{\initial}{in}
\DeclareMathOperator{\Ass}{Ass}
\DeclareMathOperator{\support}{supp}
\DeclareMathOperator{\lcm}{lcm}
\newtheorem{thm}{Theorem}[section]
\newtheorem{lem}[thm]{Lemma}
\newtheorem{cor}[thm]{Corollary}
\newtheorem{notation}[thm]{Notation}
\begin{document}

\maketitle

\begin{abstract}
   If $I$ is a perfect ideal in a local Cohen-Macaulay ring, the generators of ideals linked to $I$ are well understood. However, the generators of the residual intersections of $I$ have only been computed in a few special cases. In this paper, we show that the $n$-residual intersections of determinantal ideals of generic $2\times n$ matrices are sums of links. 
\end{abstract}

\section{Introduction}\label{Intro}

Let $(R,m)$ be a local Cohen-Macaulay ring and $I$ and $J$ be proper $R$-ideals. If there exists an ideal $\mathfrak{a}$ generated by a regular sequence such that $I = \mathfrak{a}:J$ and $J = \mathfrak{a}:I$, then $I$ and $J$ are said to be \textit{linked}. If, in addition, $\height(I+J) \geq \height(I)+1$, then we say $I$ and $J$ are \textit{geometrically linked} .

Residual intersections are a generalization of linkage. We say that $J$ is an \textit{$s$-residual intersection} of $I$ if there exists an ideal $\mathfrak{a} = (a_1,...,a_s) \subseteq I$ such that $J = \mathfrak{a}:I$ and $\height(J)\geq s$. The residual intersection is called \textit{geometric} if $\height(I+J) \geq s+1$. This notion of residual intersections was introduced by Artin and Nagata \cite{AN}.

Linkage has a long history and is well understood, particularly in connection to Cohen-Macaulay and Gorenstein properties. Notably, in 1974 Peskine and Szpiro \cite{PS} showed that if $R$ is a local Gorenstein ring and $I$ and $J$ are linked then $R/I$ is Cohen-Macaulay if and only if $R/J$ is Cohen-Macaulay. In the same paper Peskine and Szpiro \cite{PS} also showed that the canonical module of $R/I$ is $J/\mathfrak{a}$. Similarly, the canonical module of $R/J$ is $I/\mathfrak{a}$. It should also be noted that, if $I$ is a perfect ideal, then we can use a mapping cone construction to compute the generators of $J$.

Thus comes the natural question: Can we generalize these results to residual intersections? If $R$ is a local Gorenstein ring, $\mathfrak{a}\subset I$ and $J$ are $R$-ideals such that $J = \mathfrak{a}:I$ is an $s$-residual intersection and $R/I$ is Cohen-Macaulay, then $R/J$ is not necessarily Cohen-Macaulay. For example, if $R$ is $k[x_1,\dots,x_n,y_1,\dots,y_n]$ localized at its homogeneous maximal ideal and $n \geq 4$, the residual intersections of ideals generated by $2\times 2$ minors of generic $2 \times n$ matrices are not Cohen-Macaulay unless they are links.

However, under certain technical assumptions, it is possible to ensure that $R/J$ is Cohen-Macaulay. In a 1983 paper Huneke showed that if $I$ is strongly Cohen-Macaulay and satisfies the $G_s$ condition then $R/J$ is Cohen-Macaulay \cite{H}.  Later Herzog, Vasconcelos and Villarreal \cite{HVV} as well as Hunke and Ulrich \cite{HU} explored weakening the strongly Cohen-Macaulay assumption. Ulrich further generalized this work in 1994 and found settings where it is possible to compute the canonical module of $R/J$ \cite{U}. Research is still being done on settings where $R/J$ is Cohen-Macaulay and on the computation of the canonical module of $R/J$ \cite{HN, CNT}. Recently there has also been research on the relationship between residual intersections and the Gorenstein properties of Rees algebras \cite{EU}.

Unlike in the case of linkage, we only know the generators of residual intersections in a few special cases. If $R$ is Cohen-Macaulay, the generators are known in the following cases: When $I$ is a complete intersection \cite{HU}, when $I$ is a perfect ideal of height two \cite{H}, when $R/I$ is Gorenstein and $I$ is of height three \cite{KU}, and for certain $(\height(I)+1)$-residual intersections \cite{KMU}. Bouca and Hassanzadeh gave a formula to compute an $s$-residual intersection of $I$ when $I$ is an almost complete intersection by applying their more general results \cite{BH}. It has also been shown that one may express the residual intersections of ideals $I$ in terms of sums of links if the deviation of $I$ is less than or equal to 3, and some other technical assumptions are satisfied \cite{T}. 

The goal of this paper is to prove the following main result:

Let $\mathbf{k}$ be a field of characteristic 0. For an integer $n \geq 4$, let $R$ be $\mathbf{k}[x_1,\dots,x_n,y_1,\dots,y_n]$ localized at its homogeneous maximal ideal, let $M = \begin{pmatrix}
  x_1 \dots x_n \\
  y_1 \dots y_n
\end{pmatrix}$ be a $2\times n$ generic matrix, and let $I \subsetneq R$ be the ideal generated by the $2\times 2$ minors of $M$.

\begin{restatable*}{thm}{mainResult}
  \label{p2-main-result}%
  For any $n$-residual intersection $\mathfrak{a}:I$ of $I$, we can select a generating set $\{a_1,\dots,a_n\}$ of $\mathfrak{a}$ such that, for all $i$, $ (a_1,\dots,\widehat{a_i},\dots,a_n):I$ is a link and $\mathfrak{a}:I = \sum_{i=1}^n (a_1,\dots,\widehat{a_i},\dots,a_n):I$.
\end{restatable*}

In order to prove our main result, we first prove it in a special case. The special case is described in Section \ref{Sum of Links}. For this case, we select a particularly ``nice" $\mathfrak{a} = (a_1,\dots,a_n)$. The generators of $\mathfrak{a}$ are selected so that, for all $1 \leq i \leq n$,  $(a_1,\dots,\widehat{a_i},\dots,a_n):I$ is a geometric link, and we use graph theory to show that $\mathfrak{a}:I$ is a residual intersection (Lemma \ref{is Res Int}). Our choice of $\mathfrak{a}$ also provides an easier computation of the Gr\"obner basis of $\sum_{i=1}^n (a_1,\dots,\widehat{a_i},\dots,a_n):I$ when using reverse lexicographical order.

We use the results of Section \ref{Sec Links} to compute the generators of $\sum_{i=1}^n (a_1,\dots,\widehat{a_i},\dots,a_n):I$ (Lemma \ref{link is sum of monomials}). Then, after computing the Gr\"obner basis of $\sum_{i=1}^n (a_1,\dots,\widehat{a_i},\dots,a_n):I$ in Lemma \ref{Grobner basis of J}, we use the Gr\"obner Basis to prove that $\mathfrak{a}:I = \sum_{i=1}^n (a_1,\dots,\widehat{a_i},\dots,a_n):I$ (Theorem \ref{sum of links}).

This brings us to Section \ref{Main Section} wherein we prove our main result. In this section, we also prove Theorem \ref{general specialization}, which provides sufficient conditions for colon ideas to commute with surjective maps. This theorem, while interesting on its own, is also critical to proving our main result as it allows us to between our special case, generic residual intersections, and arbitrary residual intersections. 

\section{Links}\label{Sec Links}

The purpose of this section is to establish notation and compute the generators for a special case of links of determinantal ideals of $2 \times n$ matrices, so that we may use these results in Section \ref{Sum of Links}.

Let $\mathbf{k}$ be a field of characteristic 0 with $n \geq 4$, and let $R = \mathbf{k}[x_1,\dots,x_n,y_1,\dots,y_n]$. It should be noted that $R$ is naturally bigraded with $\bideg(x_i)=(1,0)$ and $\bideg(y_i)=(0,1)$ for all integers $i$.

Throughout this chapter we will be using the following notation: Let $[i,j]$ be the set of integers, $t$, such that $i \leq t \leq j$ where $i$ and $j$ are integers, and $i \leq j$. If $i > j$, then we set $[i,j] = \emptyset$. Let $\{i_1,\dots,i_k\} \subseteq [ 1,n]$ be a nonempty set. We define $X_{\{i_1,\dots,i_k\}}$ as the product $x_{i_1}\dots x_{i_k}$ and $Y_{\{i_1,\dots,i_k\}}$ as the product $y_{i_1}\dots y_{i_k}$. We define $X_\emptyset$ and $Y_\emptyset$ as 1. Notice that this definition implies that $X_{\{i_1,\dots,i_k\}}$ and $Y_{\{i_1,\dots,i_k\}}$ are square free monomials.

Let $M = \begin{pmatrix}
  x_1 \dots x_n \\
  y_1 \dots y_n
\end{pmatrix}$. When $i \neq j$, we denote the $2 \times 2$ minors of $M$ by $\Delta_{i,j} = x_iy_j-x_jy_i$. Note that when $i \neq j$, $\Delta_{i,j}$ is a bihomogeneous element of $R$. For notational convenience we set $\Delta_{i,i} = 0$. Let $I = I_2(M)$ be the ideal generated by the $2 \times 2$ minors of $M$, and let $\mathfrak{a} = (\Delta_{1,2}, \Delta_{2,3}, \dots, \Delta_{n-1,n})$. In this section, we will show that $\mathfrak{a}:I$ is a link (Lemma \ref{linkgens}) and compute its generators (Theorem \ref{generators of link}).

Note that the following lemma implies that $\mathfrak{a}:I$ is a link.

\begin{lem}
\label{linkgens}
  The sequence $\Delta_{1,2}, \Delta_{2,3}, \dots, \Delta_{n-1,n}$ is a regular sequence.
\end{lem}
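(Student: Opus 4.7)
The strategy is to reduce the regular sequence condition to a height computation and then bound the height from below via an initial monomial ideal.

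Since $R$ is regular (hence Cohen--Macaulay), a sequence of elements in the maximal ideal is regular if and only if the ideal they generate has height equal to the number of generators. Thus it suffices to show $\height(\mathfrak{a}) = n-1$. The upper bound $\height(\mathfrak{a}) \leq n-1$ is immediate from Krull's height theorem, and also follows from $\mathfrak{a}\subseteq I$ together with the classical formula $\height(I) = n-1$ for the ideal of $2\times 2$ minors of a generic $2\times n$ matrix.

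For the reverse inequality, I pass to the ambient polynomial ring and compute in an initial ideal. Equip $\mathbf{k}[x_1,\dots,x_n,y_1,\dots,y_n]$ with the lexicographic term order in which $x_1 > \cdots > x_n > y_1 > \cdots > y_n$. Then $\initial(\Delta_{i,i+1}) = x_iy_{i+1}$, so
\[
  \initial(\mathfrak{a}) \;\supseteq\; (x_1 y_2,\, x_2 y_3,\, \dots,\, x_{n-1} y_n).
\]
These $n-1$ monomials are pairwise coprime: even the adjacent pair $x_iy_{i+1}$ and $x_{i+1}y_{i+2}$ shares no variable. Pairwise coprime monomials form a regular sequence (an easy induction, since each new monomial involves variables untouched by the previous ones), so the monomial ideal on the right has height $n-1$. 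The standard equality $\height(\mathfrak{a}) = \height(\initial(\mathfrak{a}))$ --- which comes from the flat degeneration $\dim R/\mathfrak{a} = \dim R/\initial(\mathfrak{a})$ and transfers to the localization since $\mathfrak{a}$ lies in the homogeneous maximal ideal --- then gives $\height(\mathfrak{a}) \geq n-1$.

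Combining the two bounds yields $\height(\mathfrak{a}) = n-1$, and Cohen--Macaulayness of $R$ delivers the regular sequence. I do not anticipate any real obstacle; the only step requiring thought is the choice of term order ensuring that the initial forms of the $\Delta_{i,i+1}$ are pairwise coprime monomials, and any lex or weighted order favoring the $x$-variables over the $y$-variables does the job.
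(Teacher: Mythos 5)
Your proof is correct, but it reaches the crucial lower bound $\height(\mathfrak{a})\geq n-1$ by a genuinely different route than the paper. The paper constructs an explicit specialization $\phi\colon R\to \mathbf{k}[x_1,\dots,x_n]$ sending the matrix to $\begin{pmatrix} x_1 \dots x_{n-1}\; 0 \\ 0 \; x_1 \dots x_{n-1}\end{pmatrix}$, observes that $\phi(\mathfrak{a})=(x_1^2,\,x_2^2-x_1x_3,\dots,x_{n-1}^2)$ has radical $(x_1,\dots,x_{n-1})$ and hence height $n-1$, and then uses that height can only drop under a surjection whose kernel is a regular sequence. You instead degenerate to an initial ideal: under lex with the $x$'s dominating, $\initial(\Delta_{i,i+1})=x_iy_{i+1}$, and these $n-1$ monomials are pairwise coprime (their variable sets are disjoint), so $\initial(\mathfrak{a})$ contains a height-$(n-1)$ complete intersection and $\height(\mathfrak{a})=\height(\initial(\mathfrak{a}))\geq n-1$. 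Both arguments then finish identically via $\height=\grade$ in the Cohen--Macaulay ring $R$ (and, as in the paper, one should note that the elements are homogeneous so that ``grade equals the number of generators'' really does yield regularity of the given sequence in the given order). Your version is somewhat more self-contained for this lemma --- the coprimality of the lead terms is immediate, whereas the paper must compute $\sqrt{\phi(\mathfrak{a})}$ --- and it harmonizes with the Gr\"obner-basis machinery the paper develops later in Section \ref{sec-p2-gb}; the cost is invoking the dimension-preservation theorem for initial ideals, which the paper's elementary specialization avoids. (The remark about transferring to a localization is unnecessary here: in Section \ref{Sec Links} the ring $R$ is the polynomial ring itself.)
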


\begin{proof}
  Let $\mathfrak{a} = (\Delta_{1,2}, \Delta_{2,3}, \dots, \Delta_{n-1,n})$. Define $\phi: R \rightarrow \mathbf{k}[x_1,\dots,x_n] $ as the following specialization map:
  \begin{equation*}
    \begin{pmatrix}
      x_1 \dots x_n \\
      y_1 \dots y_n
    \end{pmatrix} \rightarrow \begin{pmatrix}
      x_1 \dots x_{n-1}\: 0 \\
      0 \: x_1 \dots x_{n-1}
    \end{pmatrix}.
  \end{equation*}

Note that $\phi$ is surjective, that $\phi(\mathfrak{a}) = (x_1^2,x_2^2-x_1x_3,x_3^2 - x_2x_4, \dots, x_{n-2}^2-x_{n-3}x_{n-1}, x_{n-1}^2)$, and that $\sqrt{\phi(\mathfrak{a})} = (x_1, \dots, x_{n-1})$. Thus, $\height(\phi(\mathfrak{a})) = n-1$. Since $R$ is a Cohen-Macaulay ring, and since $\phi$ is surjective with a kernel generated by a regular sequence in $R$, $\height(\mathfrak{a}) \geq \height(\phi(\mathfrak{a})) = n-1$.  Furthermore, as $R$ is a Cohen-Macaulay ring, $\height(\mathfrak{a})=\grade(\mathfrak{a})$. Thus, $\Delta_{1,2}, \Delta_{2,3}, \dots, \Delta_{n-1,n}$ is a  regular sequence.\end{proof}

The following lemma will be used to help show that our candidate for the generating set of $\mathfrak{a}:I$ is contained in $\mathfrak{a}:I$.

\begin{lem}
\label{in a}
Fix integers $i$ and $j$ such that $1 \leq i < j \leq n$ and let $\mathfrak{a} = (\{\Delta_{t,t+1}\:\vert \: 1 \leq t \leq n-1 \})$. Let $K$ and $L$ be sets such that $K\cup L = [ i+1, j-1]$ and $K \cap J = \emptyset$. Then $X_{K}Y_{L}\Delta_{i,j} \in \mathfrak{a}$.
\end{lem}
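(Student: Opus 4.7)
My plan is to prove this by induction on $j-i$, using two elementary Plücker-type identities to reduce the length of the minor.

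\textbf{Base case.} If $j-i=1$, then $[i+1,j-1]=\emptyset$, forcing $K=L=\emptyset$ and $X_K Y_L \Delta_{i,j} = \Delta_{i,i+1}$, which is one of the generators of $\mathfrak{a}$.

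\textbf{Key identities.} For any indices $i,j,k$, direct expansion using $\Delta_{p,q}=x_p y_q - x_q y_p$ gives
\begin{equation*}
x_k \Delta_{i,j} \;=\; x_i \Delta_{k,j} + x_j \Delta_{i,k}, \qquad y_k \Delta_{i,j} \;=\; y_i \Delta_{k,j} + y_j \Delta_{i,k}.
\end{equation*}
These relations are the engine of the induction: they trade a factor of $x_k$ or $y_k$ for a replacement of $\Delta_{i,j}$ (of length $j-i$) by shorter minors $\Delta_{i,k}$ and $\Delta_{k,j}$ (of lengths $k-i$ and $j-k$, both strictly less than $j-i$ when $i<k<j$).

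\textbf{Inductive step.} Assume the statement for all pairs with smaller gap, and suppose $j-i\geq 2$. Choose any $k\in [i+1,j-1]$; since $K\sqcup L=[i+1,j-1]$, we have $k\in K$ or $k\in L$. In the first case, write $X_K = x_k X_{K\setminus\{k\}}$ and apply the $x_k$-identity to expand
\begin{equation*}
X_K Y_L \Delta_{i,j} \;=\; x_i \bigl(X_{K\setminus\{k\}} Y_L \Delta_{k,j}\bigr) \;+\; x_j \bigl(X_{K\setminus\{k\}} Y_L \Delta_{i,k}\bigr);
\end{equation*}
the case $k\in L$ is analogous using the $y_k$-identity. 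Now split the index set $[i+1,j-1]\setminus\{k\}$ as the disjoint union $[i+1,k-1]\sqcup [k+1,j-1]$. The variables indexed in $[i+1,k-1]$ can be factored out of the term containing $\Delta_{k,j}$ (they are not among its ``middle'' indices), and the remaining monomial $X_{(K\setminus\{k\})\cap[k+1,j-1]}\,Y_{L\cap[k+1,j-1]}$ is exactly of the form to which the induction hypothesis applies with $\Delta_{k,j}$. A symmetric argument handles the term with $\Delta_{i,k}$, with the roles of the two halves of the partition swapped. Both terms therefore lie in $\mathfrak{a}$, and so does their sum.

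\textbf{Main obstacle.} The argument itself is straightforward once the identities are in hand; the only delicate point is the bookkeeping that shows, after splitting off the ``outside'' variables, the remaining monomial factor partitions the correct interval $[k+1,j-1]$ (respectively $[i+1,k-1]$) into $X$- and $Y$-indices, so that the induction hypothesis truly applies. This just amounts to intersecting $K$ and $L$ with the appropriate subintervals and noting disjointness is preserved.
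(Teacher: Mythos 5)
Your proof is correct, and it rests on the same three-term Pl\"ucker relation that drives the paper's argument (the paper's identities $y_{j+1}\Delta_{i,j}+y_i\Delta_{j,j+1}=y_j\Delta_{i,j+1}$ and its $x$-analogue are exactly your identities specialized to $k=j$). The difference is in how the induction is organized. The paper fixes $i$ and inducts on $j$, always splitting at the far endpoint so that one of the two resulting terms is literally a generator $\Delta_{j,j+1}$ of $\mathfrak{a}$ and the other is the immediately preceding case; this produces each element $X_KY_L\Delta_{i,j}$ as an explicit telescoping combination of generators, a form the paper reuses in its later Gr\"obner basis computations (Lemmas \ref{lemma 1} and \ref{lemma 2}). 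You instead induct on the gap $j-i$ and split at an arbitrary interior index $k$, recursing on both halves $\Delta_{i,k}$ and $\Delta_{k,j}$; this divide-and-conquer version is slightly more flexible and makes the symmetry of the situation clearer, at the cost of not exhibiting an explicit expression in terms of the generators. Your bookkeeping for the inductive step --- intersecting $K\setminus\{k\}$ and $L$ with $[i+1,k-1]$ and $[k+1,j-1]$ and factoring the outside variables out of each term --- is exactly right, so the argument goes through.
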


\begin{proof}
Fix $i$. We will proceed by induction on $j$. Our induction begins with $j = i+1$ as $j>i$. By definition $\Delta_{i,i+1}\in \mathfrak{a}$.

Inductive step: Now, pick $j < n$ and suppose for $\{i_1,\dots,i_k\} \cup \{j_1,\dots,j_l\} = [ i+1, j-1 ]$ and $\{i_1,\dots,i_k\} \cap \{j_1,\dots,j_l\} = \emptyset$, $X_{ \{i_1,\dots,i_k\}}Y_{\{j_1,\dots,j_l\}}\Delta_{i,j} \in \mathfrak{a}$. Note that $\Delta_{j,j+1} \in \mathfrak{a}$.

We have the following equalities:
\begin{equation*}
  y_{j+1}\Delta_{i,j}+y_i\Delta_{j,j+1} = y_j \Delta_{i,j+1}
\end{equation*}
and
\begin{equation*}
  x_{j+1}\Delta_{i,j}+x_i\Delta_{j,j+1} = x_j \Delta_{i,j+1}.
\end{equation*}
Thus, we have:
\begin{equation*}
  y_{j+1}(X_{\{i_1,\dots,i_k\}}Y_{\{j_1,\dots,j_l\}}\Delta_{i,j}) + y_{i}X_{\{i_1,\dots,i_k\}}Y_{\{j_1,\dots,j_l\}}(\Delta_{j,j+1}) = X_{\{i_1,\dots,i_k\}}Y_{\{j_1,\dots,j_l,j\}}\Delta_{i,j+1}
\end{equation*}
and
\begin{equation*}
  x_{j+1}(X_{\{i_1,\dots,i_k\}}Y_{\{j_1,\dots,j_l\}}\Delta_{i,j}) + x_{i}X_{\{i_1,\dots,i_k\}}Y_{\{j_1,\dots,j_l\}}(\Delta_{j,j+1}) = X_{\{i_1,\dots,i_k,j\}}Y_{\{j_1,\dots,j_l\}}\Delta_{i,j+1}.
\end{equation*}
Note that the left hand side of both equalities is in $\mathfrak{a}$ by induction, which implies that the right hand side of both equalities is in $\mathfrak{a}$. As $\{i_1,\dots,i_k\} \cup \{j_1,\dots,j_l, j\} = \{i_1,\dots,i_k,j\} \cup \{j_1,\dots,j_l\} = [i+1, j]$, we are done. \end{proof}

In the following theorem we give an explicit computation for the generators of $\mathfrak{a}:I$.

\begin{thm}
  \label{generators of link}
  Let $\mathfrak{a} = (\{\Delta_{t,t+1}\:\vert \:1 \leq t \leq n-1\})$ and $M = \{X_KY_L \: \vert \: K\cup L = [ 2, n-1 ],\; K\cap L = \emptyset \}$. Then for any subset $\{m_1, \dots, m_{n-1}\}$ of $M$, where the bidegree of $m_i$ is not equal to the bidegree $m_j$ for any $i \neq j$, $J = \mathfrak{a}:I = \mathfrak{a}+ (\{m_1, \dots, m_{n-1}\})$.
\end{thm}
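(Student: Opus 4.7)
The plan is to establish both inclusions separately.

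For the inclusion $\mathfrak{a} + (m_1, \ldots, m_{n-1}) \subseteq \mathfrak{a}:I$, it suffices to check that $m \cdot \Delta_{i,j} \in \mathfrak{a}$ for each $m = X_K Y_L \in M$ and each generator $\Delta_{i,j}$ of $I$ with $1 \le i < j \le n$. Set $K'' = K \cap [i+1,j-1]$, $L'' = L \cap [i+1,j-1]$, $K' = K \setminus K''$, $L' = L \setminus L''$; since $K \sqcup L = [2,n-1]$, we have $K'' \sqcup L'' = [i+1,j-1]$. By Lemma \ref{in a}, $X_{K''} Y_{L''} \Delta_{i,j} \in \mathfrak{a}$, and multiplying by $X_{K'} Y_{L'}$ yields $m \cdot \Delta_{i,j} \in \mathfrak{a}$, as required.

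For the reverse inclusion I would invoke Peskine-Szpiro linkage theory. Since $\mathfrak{a}$ is a regular sequence of length $n - 1 = \height(I)$ inside $I$ (Lemma \ref{linkgens}) and $R/I$ is Cohen-Macaulay, the quotient $(\mathfrak{a}:I)/\mathfrak{a}$ is isomorphic to the canonical module $\omega_{R/I}$ up to a bigraded shift. The Cohen-Macaulay type of $R/I$, read off the tail of the Eagon-Northcott resolution of $I_2$ of a generic $2 \times n$ matrix, is exactly $n - 1$. Hence $(\mathfrak{a}:I)/\mathfrak{a}$ is minimally generated by $n - 1$ bihomogeneous elements, and a bigraded Hilbert series computation (or a direct analysis of the last term of the Eagon-Northcott complex with its bigrading) shows that these generators occupy pairwise distinct bidegrees, one of the form $(k, n-2-k)$ for each $k \in \{0, 1, \ldots, n-2\}$.

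It remains to prove that any admissible choice $m_1, \ldots, m_{n-1}$ forms such a minimal generating set. First, no $m_i$ lies in $\mathfrak{a}$: the specialization $y_t \mapsto x_t$ sends $\mathfrak{a}$ to zero but sends each $m_i$ to the nonzero monomial $x_2 x_3 \cdots x_{n-1}$. Second, since the $m_i$'s have pairwise distinct bidegrees and there are exactly $n-1$ admissible bidegrees $(k, n-2-k)$ in $M$, the set $\{m_1, \ldots, m_{n-1}\}$ must contain exactly one representative of each such bidegree. Consequently, their images in the bigraded $\mathbf{k}$-vector space $(\mathfrak{a}:I)/(\mathfrak{a} + \mathfrak{m}\cdot(\mathfrak{a}:I))$ are nonzero and live in $n-1$ distinct bidegrees, hence are linearly independent. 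Bigraded Nakayama then forces them to minimally generate $(\mathfrak{a}:I)/\mathfrak{a}$, yielding the desired equality.

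The principal obstacle is the bidegree step: verifying that the minimal generators of $\omega_{R/I}$ occupy exactly the bidegrees $(k, n-2-k)$ for $k = 0, \ldots, n-2$. This requires either tracking bigradings through the tail of the Eagon-Northcott complex or computing the bigraded Hilbert series of $\omega_{R/I}$ directly. Once this bidegree bookkeeping is settled, the rest of the argument is formal.
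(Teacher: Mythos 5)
Your proof is correct and essentially matches the paper's: the containment $\mathfrak{a}+(m_1,\dots,m_{n-1})\subseteq \mathfrak{a}:I$ is the same application of Lemma \ref{in a}, and the reverse inclusion in both treatments comes down to knowing (from the tail of the Eagon--Northcott resolution, i.e.\ the mapping cone) that $J/\mathfrak{a}\cong\omega_{R/I}$ needs exactly $n-1$ generators, all of total degree $n-2$, so that nonzero elements in $n-1$ distinct bidegrees must minimally generate --- the only real divergence being that the paper proves $m_i\notin\mathfrak{a}$ via primality of $I$ rather than your specialization $y_t\mapsto x_t$, which also works. The ``principal obstacle'' you flag is actually unnecessary: since $\omega_{R/I}$ is generated in the single total degree $n-2$, the submodule $\mathfrak{m}\cdot(J/\mathfrak{a})$ vanishes in that degree, so nonvanishing in $J/\mathfrak{a}$ together with distinctness of bidegrees already gives linear independence modulo $\mathfrak{m}$, with no need to determine which bidegrees the minimal generators of $\omega_{R/I}$ occupy.
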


\begin{proof}
  Pick an $m_i$. Let $\Delta_{c,d}$ be a generator of $I$. Note that $c$ and $d$ are integers with $1 \leq c < d \leq n$. Note that by definition $m_i$ has a factor of the form $X_{K'}Y_{L'}$ such that $K' \cup L' = [c+1,d-1]$ and $K' \cap L' = \emptyset$. Thus, by \ref{in a}, $m_i \Delta_{c,d} \in \mathfrak{a}$. So $m_i \in J$. Thus $\mathfrak{a} + (\{m_1, \dots, m_{n-1}\}) \subseteq J$.

  The mapping cone construction tells us that a minimal generating set of $J$ has $n-1$ generators of degree 2 and $n-1$ generators of degree $n-2$, thus is we find linearly independent elements of $J$ then we have a generating set of $J$. Note that the set $\{\Delta_{1,2}, \dots, \Delta_{n-1,n}, m_1,\dots, m_{n-1}\}$ meets this criteria and is contained in $J$, so all that remains is to prove linear independence over $\mathbf{k}$.

  Note that every $\Delta_{i,i+1}$ is a homogeneous polynomial of bidegree $(1,1)$. Every $m_i$ has bidegree $(k,l)$ where $k+l = n-2$. The set $\{\Delta_{1,2}, \dots, \Delta_{n-1,n}\}$ is clearly linearly independent, so all that remains is to show that, for all $l$, $m_l \neq \sum_{i=1}^{n-1}b_i\Delta_{i,i+1} + \sum_{j=1, i \neq l}^{n-1} c_jm_j$.

  Suppose $m_l = \sum_{i=1}^{n-1}b_i\Delta_{i,i+1} + \sum_{j=1, i \neq l}^{n-1} c_jm_j$. As the $\Delta_{i,i+1}$ are homogeneous polynomials of degree 2 and the $m_j$ are monomials of degree $n-2$, and $m_l$ is a monomial of degree $n-2$, we may assume that the $b_i$ and $c_j$ are all homogeneous polynomials and moreover that that $c_j$ all have degree $0$. However, as by assumption the bidegree of $m_l$ is not equal to the bidegree of any of the $m_j$ where $j \neq l$, this implies $m_l = \sum_{i=1}^{n-1}b_i\Delta_{i,i+1}$.

  So, $m_l \in \mathfrak{a}\subseteq I$. Note that $I$ is prime \cite[Theorem 2.10]{BV}, so this implies $x_i$ or $y_i$ is in $I$ for some $i$. However $I$ is generated by homogeneous binomials of degree 2, so this is a contradiction. Thus $m_l \notin \mathfrak{a}$.

  Thus $m_l \neq \sum_{i=1}^{n-1}b_i\Delta_{i,i+1} + \sum_{j=1, i \neq l}^{n-1} c_jm_j$, and we are done. \end{proof}

\section{Special Case}\label{Sum of Links}

In this section we establish our special case and prove our main result, Theorem \ref{p2-main-result}, is true for this case. Then, in the following section, we will generalize the special case from Section \ref{Main Section} in order to prove Theorem \ref{p2-main-result}.

Let $\mathbf{k}$ be a field of characteristic 0. Let $R = \mathbf{k}[x_1,\dots,x_n,y_1,\dots,y_n,z_1,\dots,z_n]$, for an integer $n \geq 4$. Let $M = \begin{pmatrix}
  x_1 \dots x_n \\
  y_1 \dots y_n
\end{pmatrix}$ be a $2\times n$ generic matrix, and let $I \subsetneq R$ be the ideal generated by the $2\times 2$ minors of $M$. Recall that for our special case, we want to select  $\mathfrak{a} = (a_1,\dots,a_n)$ so that $\mathfrak{a}:I$ is an $n$-residual intersections, and, for all $1 \leq i \leq n$, $(a_1,\dots,\widehat{a_i},\dots,a_n):I$ is a geometric link. Thus, we begin by giving the notation used for the generators of $\mathfrak{a}$.
\begin{notation}\label{gens of a}
  Define the following binomials: $g_1 = z_1\Delta_{2,1},\; g_n = z_n\Delta_{n,n-1}$ and for\; $2 \leq i \leq n-1$, $g_i = z_i\Delta_{i+1,i-1}$. Let $\mathfrak{a} = (g_1,\dots,g_n)$ and let $\mathfrak{a}_i = (g_1, \dots, \widehat{g}_i, \dots, g_n)$. Let $J = \mathfrak{a}:I$ and $J_i = \mathfrak{a}_i:I$. 
\end{notation}
Using this notation, the specialization of Theorem \ref{p2-main-result} can be stated as:

\begin{restatable}{thm}{linksum}
\label{sum of links}
The ideal $J$ is a residual intersection equal to $\sum_{i=1}^n J_i$ and each $J_i$ is a link.
\end{restatable}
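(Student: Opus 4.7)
The plan is to verify three claims in order: (i) each $J_i$ is a link, (ii) $J$ is an $n$-residual intersection, and (iii) $J = \sum_{i=1}^n J_i$. The containment $\sum_{i=1}^n J_i \subseteq J$ is immediate from $\mathfrak{a}_i \subseteq \mathfrak{a}$, so the real content of (iii) is the reverse inclusion, which I would address last.

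For (i), each $g_j = z_j \cdot \Delta_{\cdot,\cdot}$ lies in $I$, so $\mathfrak{a}_i \subseteq I$, and since $R$ is Cohen-Macaulay it suffices to prove $\height(\mathfrak{a}_i) \geq n-1$. I would follow the template of Lemma \ref{linkgens}: specialize $\phi \colon R \to \mathbf{k}[x_1,\dots,x_n,y_1,\dots,y_n]$ by $z_j \mapsto 1$ for all $j$, whose kernel $(z_1-1,\dots,z_n-1)$ is a regular sequence on $R$. The image $\phi(\mathfrak{a}_i)$ is a sequence of $n-1$ distinct $2\times 2$ minors of $M$; applying a further specialization of the type used in Lemma \ref{linkgens} (collapsing the bottom row to a shifted copy of the top) converts these into polynomials in $n-1$ variables whose radical is the homogeneous maximal ideal, yielding the height bound, which lifts back through the two specializations.

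For (ii), the only nontrivial point is $\height(\mathfrak{a}) \geq n$. Since each $g_i = z_i \cdot \Delta_{\cdot,\cdot}$ with $z_i$ a fresh variable, any minimal prime of $\mathfrak{a}$ is determined by a choice, for each $i$, of whether to contain $z_i$ or the companion minor, so the height bound reduces to a combinatorial claim about heights of subsets of the minors $\{\Delta_{2,1}, \Delta_{3,1}, \Delta_{4,2}, \dots, \Delta_{n, n-2}, \Delta_{n,n-1}\}$. The corresponding index pairs form a specific graph on $[1,n]$ whose connectivity gives exactly the required height bounds; this is the graph-theoretic content of the forthcoming Lemma \ref{is Res Int}.

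For (iii), I would first use Theorem \ref{generators of link} to write each $J_i$ explicitly: omitting $g_i$ from $\mathfrak{a}$ leaves $n-1$ minors whose link is generated by $\mathfrak{a}_i$ together with square-free monomials $X_K Y_L$ of total $x$/$y$-degree $n-2$ (each scaled by an appropriate $z$-monomial). Summing over $i$ yields an explicit, essentially monomial, generating set for $\sum_{i=1}^n J_i$, which will be the content of Lemma \ref{link is sum of monomials}. Next I would verify that in reverse lexicographic order this generating set is already a Gröbner basis, using Lemma \ref{in a} and the exchange identities $y_{j+1}\Delta_{i,j} + y_i \Delta_{j,j+1} = y_j \Delta_{i,j+1}$ from Section \ref{Sec Links} (and their $x$-analogues) to reduce all relevant $S$-polynomials to zero. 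Finally, $J = \sum_i J_i$ follows by comparing initial ideals: the containment $\initial(J) \subseteq \initial(\sum J_i)$, which is what needs proof, should be forced by the residual intersection structure from step (ii), which restricts the possible leading monomials of elements of $J$ modulo $\mathfrak{a}$. The Gröbner basis verification is the combinatorial heart of the argument and the principal obstacle, as the square-free $X_K Y_L$ generators from Section \ref{Sec Links} threading through $n$ different linked ideals require careful bookkeeping.
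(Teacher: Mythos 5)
Your steps (i) and (ii) are essentially the paper's (modulo two slips: for (i) the paper handles the height bound by relabeling automorphisms reducing to Lemma \ref{linkgens} rather than by a direct specialization of the skip minors $\Delta_{i+1,i-1}$, which would require a new radical computation; and for (ii) what must be shown is $\height(\mathfrak{a}:I)\geq n$, not $\height(\mathfrak{a})\geq n$ --- the latter is impossible since $\mathfrak{a}\subseteq I$ and $\height(I)=n-1$). The real problem is the final step of (iii). You assert that the containment $\initial(J)\subseteq\initial(\sum_i J_i)$ ``should be forced by the residual intersection structure,'' but you give no mechanism, and I do not see one: knowing that $J$ is an $n$-residual intersection does not by itself constrain the leading monomials of elements of $J$ modulo $\mathfrak{a}$, and to compare initial ideals you would need independent knowledge of $\initial(J)$ (say, a Hilbert series or multiplicity computation for $R/J$), which you have not supplied. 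As written, the step that actually proves the reverse inclusion $J\subseteq\sum_i J_i$ is missing.

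The paper uses the Gr\"obner basis for a different purpose entirely: since every element of the basis is a sum of squarefree monomials, $\initial(J')$ is a squarefree monomial ideal, hence $J'=\sum_i J_i$ is \emph{reduced} and $\Ass(J')=\Min(J')$. The equality $J=J'$ is then proved by localization at these minimal primes. At any prime $q$ not containing $m=(x_1,\dots,x_n,y_1,\dots,y_n)$, the ideal $I_q$ is a complete intersection and $J_q=J'_q$ by \cite[Corollary 2.18]{KMU}; and no minimal prime of $J'$ can contain $m$, because $J'\subseteq J\subseteq m$ would force $m$ to be a minimal prime of $J$, contradicting the fact that $J$ is unmixed of height $n$ (\cite[Proposition 1.7]{U}, using that $I$ is Cohen--Macaulay and $G_{2n}$) while $\height(m)=2n$. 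If you want to salvage your outline, you should replace the vague initial-ideal comparison with this reducedness-plus-localization argument, or else supply an honest multiplicity computation for $R/J$ to compare against $R/\initial(J')$.
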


In order to prove the above theorem, we first establish that each $J_i$ is a link (Lemma \ref{a link}) and that $J$ is a residual intersection (Lemma \ref{is Res Int}). Once we have an explicit description of the generators of each $J_i$ (Lemma \ref{link is sum of monomials}), we use this to a compute a Gr\"obner basis of $\sum_{i=1}^n J_i$ (Lemma \ref{Grobner basis of J}). This computation allows us to prove Theorem \ref{sum of links}.

\begin{lem}
  \label{a link}
  Each $J_i$ is a link.
\end{lem}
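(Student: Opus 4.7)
The plan is to show that the $n-1$ generators of $\mathfrak{a}_i$ themselves form a regular sequence; since $\mathfrak{a}_i \subseteq I$ and $\height(I) = n-1$, this immediately makes $J_i = \mathfrak{a}_i : I$ a link of $I$. Because $R$ is Cohen-Macaulay and $\mathfrak{a}_i$ has exactly $n-1$ generators, the whole task reduces to showing $\height(\mathfrak{a}_i) = n-1$. The upper bound $\height(\mathfrak{a}_i) \le n-1$ is immediate from $\mathfrak{a}_i \subseteq I$, so all the content lies in the lower bound.

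For the lower bound I would imitate the specialization argument of Lemma~\ref{linkgens}. The key combinatorial observation is that the index pairs $\{a_j, b_j\}$ appearing in the $\Delta_{a_j, b_j}$ of Notation~\ref{gens of a} form an edge set on $[n]$ in which every vertex has degree exactly $2$, so the resulting graph is a Hamiltonian cycle on $[n]$. Removing the edge corresponding to $g_i$ leaves a Hamiltonian path $P_i$, so there is a permutation $\sigma$ of $[n]$ listing the vertices of $P_i$ in order. The variable relabeling $x_k \leftrightarrow x_{\sigma(k)}$, $y_k \leftrightarrow y_{\sigma(k)}$ is an automorphism of $R$, and after applying it the generators of $\mathfrak{a}_i$ take the form $z_{j_1}\Delta_{1,2}, z_{j_2}\Delta_{2,3}, \ldots, z_{j_{n-1}}\Delta_{n-1,n}$ for a reordering $j_1, \ldots, j_{n-1}$ of $[n] \setminus \{i\}$.

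At that point I would define $\phi \colon R \to \mathbf{k}[x_1, \ldots, x_{n-1}]$ by substituting $z_j \mapsto 1$ for all $j$ and composing with the map from Lemma~\ref{linkgens}. Its kernel is generated by $\{z_j - 1\}_{j=1}^n$ together with the regular sequence exhibited in the proof of Lemma~\ref{linkgens}; this combined list is a regular sequence in $R$ because the $z_j - 1$ are linear in variables disjoint from the $x, y$ variables. Under $\phi$, each $z_{j_k}\Delta_{k,k+1}$ maps to $\phi(\Delta_{k,k+1})$, so $\phi(\mathfrak{a}_i)$ is exactly the ideal $(x_1^2,\, x_2^2 - x_1 x_3,\, \ldots,\, x_{n-1}^2)$ that appears in the proof of Lemma~\ref{linkgens}, whose radical is the maximal ideal and whose height is therefore $n-1$. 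The standard inequality $\height(\mathfrak{a}_i) \ge \height(\phi(\mathfrak{a}_i))$ for specializations with regular-sequence kernels then yields $\height(\mathfrak{a}_i) \ge n-1$, completing the argument.

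The only step requiring care will be the combinatorial one: verifying that the multiset of pairs $\{a_j, b_j\}$ really is a Hamiltonian cycle and constructing $\sigma$ explicitly. This comes down to a short case analysis at the boundary indices $i \in \{1, 2, n-1, n\}$, where the irregular endpoint formulas $g_1 = z_1 \Delta_{2,1}$ and $g_n = z_n \Delta_{n, n-1}$ interact with the uniform middle formula $g_j = z_j \Delta_{j+1, j-1}$ differently than elsewhere; for all other $i$ the path traversal is the straightforward alternation $\sigma = (1, 3, 5, \ldots, n{-}1, n, n{-}2, \ldots, 4, 2)$ (or its reverse) with one edge excised.
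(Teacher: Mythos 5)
Your proposal is correct and follows essentially the same route as the paper: reduce to showing $\mathfrak{a}_i$ has height $n-1$, strip off the $z$-variables, and transport $\mathfrak{a}_i$ onto the ``path'' ideal $(\Delta_{1,2},\dots,\Delta_{n-1,n})$ of Lemma~\ref{linkgens} by a permutation of indices. The paper simply writes out those permutations explicitly in eight parity cases, whereas your Hamiltonian-cycle observation (every vertex of the edge set $\{1,2\},\{n-1,n\},\{j-1,j+1\}$ has degree two, and deleting one edge leaves a Hamiltonian path) is a cleaner way of producing the same automorphisms.
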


\begin{proof}

In order to prove that $J_i$ is a link we only need that $\mathfrak{a}_i$ is a regular sequence. Let $R' = \mathbf{k}[x_1,\dots,x_n,y_1,\dots,y_n]$. Let $g_1' = \Delta_{2,1}$, $g_n' = \Delta_{n,n-1}$, and for $2 \leq i \leq n-1$, $g_i' = \Delta_{i+1,i-1}$. As the $z_j$ are indeterminates, it is enough to show that $\mathfrak{g}_i' = \{g_1', \dots, \widehat{g}_i',\dots,g_n'\}$ is a regular sequence over $R'$. By \ref{linkgens} it is enough to show that, up to sign, each $\mathfrak{g}_i'$ is obtained from $\{ \Delta_{t,t+1}\:\vert \:1 \leq t \leq n-1 \}$ via a ring automorphism.

First, for $\mathfrak{g}_n'$ we have the following ring automorphisms:

If $n$ is odd, let $m = \frac{n+1}{2}$ and define $\phi: R' \rightarrow R'$ by having $\phi$ permute the indices of the $x_i$ and $y_i$ as follows: If $i$ is odd, $i \mapsto m+ \frac{i-1}{2}$. If $i$ is even, $i \mapsto m- \frac{i}{2}$. Thus,
\begin{align*}
  \phi(\Delta_{2,1}) &= \Delta_{m-1,m};\\
  \phi(\Delta_{i+1,i-1}) &= -\Delta_{m + \frac{i}{2}-1,m + \frac{i}{2} } \quad \text{ if } i \text{ is even};\\
  \phi(\Delta_{i+1,i-1}) &= \Delta_{m - \frac{i+1}{2},m - \frac{i-1}{2}} \quad \text{ if } i \text{ is odd}.
\end{align*}

If $n$ is even, let $m = \frac{n}{2}$ and define $\phi: R' \rightarrow R'$ by having $\phi$ permute the indices of the $x_i$ and $y_i$ as follows: If $i$ is odd, $i \mapsto m - \frac{i-1}{2}$. If $i$ is even, $i \mapsto m + \frac{i}{2}$. Thus,
\begin{align*}
  \phi(\Delta_{2,1}) &= -\Delta_{m,m+1}&;\\
  \phi(\Delta_{i+1,i-1}) &= \Delta_{m - \frac{i}{2},m - \frac{i}{2} + 1 } \quad \text{ if } i \text{ is even};\\
  \phi(\Delta_{i+1,i-1}) &= -\Delta_{m + \frac{i-1}{2},m + \frac{i+1}{2}} \quad \text{ if } i \text{ is odd}.
\end{align*}

For $\mathfrak{g}_{n-1}'$ we have the following:

If $n$ is odd, let $m = \frac{n-1}{2}$ and define $\phi: R' \rightarrow R'$ by having $\phi$ permute the indicies of the $x_i$ and $y_i$ as follows: $n \mapsto n$. If $i$ is odd and $i \neq n$, $i \mapsto m - \frac{i-1}{2}$. If $i$ is even, $i \mapsto m + \frac{i}{2}$. Thus,
\begin{align*}
  \phi(\Delta_{2,1}) &= -\Delta_{m,m+1};\\
  \phi(\Delta_{n,n-1}) &= -\Delta_{n-1,n};\\
  \phi(\Delta_{i+1,i-1}) &= \Delta_{m - \frac{i}{2},m - \frac{i}{2} + 1 } \quad \text{ if } i \text{ is even};\\
  \phi(\Delta_{i+1,i-1}) &= -\Delta_{m + \frac{i-1}{2},m + \frac{i+1}{2}} \quad \text{ if } i \text{ is odd}.
\end{align*}

If $n$ is even, let $m = \frac{n}{2}$ and define $\phi: R' \rightarrow R'$ by having $\phi$ permute the indices of the $x_i$ and $y_i$ as follows: $n \mapsto n$. If $i$ is odd, $i \mapsto m + \frac{i-1}{2}$. If $i$ is even and $i \neq n$, $i \mapsto m - \frac{i}{2}$. Thus,
\begin{align*}
  \phi(\Delta_{2,1}) &= \Delta_{m-1,m};\\
  \phi(\Delta_{n,n-1}) &= -\Delta_{n-1,n};\\
  \phi(\Delta_{i+1,i-1}) &= -\Delta_{m + \frac{i}{2}-1,m + \frac{i}{2}} \quad \text{ if } i \text{ is even};\\
  \phi(\Delta_{i+1,i-1}) &= \Delta_{m - \frac{i+1}{2},m - \frac{i-1}{2}} \quad \text{ if } i \text{ is odd}.
\end{align*}

For $\mathfrak{g}_{i}'$, where $n-1<i<2$ we have the following:

If $n$ is even and $i$ is even, let $m = \frac{n}{2}$, let $k = \frac{i}{2}$, and define $\phi: R' \rightarrow R'$ by having $\phi$ permute the indices of the $x_j$ and $y_j$ as follows:
If $j$ is odd and $j\leq i-1$, $j \mapsto n-k + \frac{j+1}{2}$.
If $j$ is odd $j\geq i+1$, $j \mapsto \frac{j+1}{2} - k$.
If $j$ is even, $j \mapsto n-k+1 -\frac{j}{2}$. Thus,
\begin{align*}
  \phi(\Delta_{2,1}) &= \Delta_{n-k,n-k+1};\\
  \phi(\Delta_{n,n-1}) &= -\Delta_{m-k,m-k+1};\\
  \phi(\Delta_{j+1,j-1}) &= -\Delta_{n-k + \frac{j}{2}, n-k + \frac{j}{2}+1} \quad \text{ if } j \leq i-1 \text{ is even};\\
  \phi(\Delta_{j+1,j-1}) &= -\Delta_{\frac{j}{2}-k,\frac{j}{2}-k+1} \quad \text{ if } j \geq i+1 \text{ is even};\\
  \phi(\Delta_{j+1,j-1}) &= \Delta_{n+1 - k  - \frac{j+1}{2},n+1 - k - \frac{j-1}{2}} \quad \text{ if } j \text{ is odd}.
\end{align*}

If $n$ is even and $i$ is odd, let $m = \frac{n}{2}$, let $k = \frac{i-1}{2}$, and define $\phi: R' \rightarrow R'$ by having $\phi$ permute the indicies of the $x_j$ and $y_j$ as follows:
If $j$ is even and $j\leq i-1$, $j \mapsto n-k + \frac{j}{2}$.
If $j$ is even $j\geq i+1$, $j \mapsto \frac{j}{2} - k$.
If $j$ is odd, $j \mapsto n-k -\frac{j-1}{2}$. Thus,
\begin{align*}
  \phi(\Delta_{2,1}) &= \Delta_{n-k,n-k+1};\\
  \phi(\Delta_{n,n-1}) &= \Delta_{m-k,m-k+1};\\
  \phi(\Delta_{j+1,j-1}) &= \Delta_{n-k -\frac{j}{2},n-k - \frac{j}{2}+1} \quad \text{ if } j \text{ is even};\\
  \phi(\Delta_{j+1,j-1}) &= -\Delta_{n-k + \frac{j-1}{2}, n-k + \frac{j+1}{2}} \quad \text{ if } j \leq i-1 \text{ is odd};\\
  \phi(\Delta_{j+1,j-1}) &= -\Delta_{\frac{j-1}{2}-k,\frac{j+1}{2}-k} \quad \text{ if } j \geq i+1 \text{ is odd}.
\end{align*}

If $n$ is odd and $i$ is even, let $m = \frac{n+1}{2}$, let $k = \frac{i}{2}$, and define $\phi: R' \rightarrow R'$ by having $\phi$ permute the indices of the $x_j$ and $y_j$ as follows:
If $j$ is odd and $j\leq i-1$, $j \mapsto n-k + \frac{j+1}{2}$.
If $j$ is odd $j\geq i+1$, $j \mapsto \frac{j+1}{2} - k$.
If $j$ is even, $j \mapsto n-k+1 -\frac{j}{2}$. Thus,
\begin{align*}
  \phi(\Delta_{2,1}) &= \Delta_{n-k,n-k+1};\\
  \phi(\Delta_{n,n-1}) &= \Delta_{m-k,m-k+1};\\
  \phi(\Delta_{j+1,j-1}) &= -\Delta_{n-k + \frac{j}{2}, n-k + \frac{j}{2}+1} \quad \text{ if } j \leq i-1 \text{ is even};\\
  \phi(\Delta_{j+1,j-1}) &= -\Delta_{\frac{j}{2}-k,\frac{j}{2}-k+1} \quad \text{ if } j \geq i+1 \text{ is even};\\
  \phi(\Delta_{j+1,j-1}) &= \Delta_{n+1-k-\frac{j+1}{2},n+1-k-\frac{j-1}{2}} \quad \text{ if } j \text{ is odd}.
\end{align*}

If $n$ is odd and $i$ is odd, let $m = \frac{n-1}{2}$, let $k = \frac{i-1}{2}$, and define $\phi: R' \rightarrow R'$ by having $\phi$ permute the indices of the $x_j$ and $y_j$ as follows:
If $j$ is even and $j\leq i-1$, $j \mapsto n-k + \frac{j}{2}$.
If $j$ is even $j\geq i+1$, $j \mapsto \frac{j}{2} - k$.
If $j$ is odd, $j \mapsto n-k -\frac{j-1}{2}$. Thus,
\begin{align*}
  \phi(\Delta_{2,1}) &= -\Delta_{n-k,n-k+1};\\
  \phi(\Delta_{n,n-1}) &= -\Delta_{m-k,m-k+1};\\
  \phi(\Delta_{j+1,j-1}) &= \Delta_{n-k -\frac{j}{2},n-k - \frac{j}{2}+1} \quad \text{ if } j \text{ is even};\\
  \phi(\Delta_{j+1,j-1}) &= -\Delta_{n-k + \frac{j-1}{2}, n-k + \frac{j+1}{2}} \quad \text{ if } j \leq i-1 \text{ is odd};\\
  \phi(\Delta_{j+1,j-1}) &= -\Delta_{\frac{j-1}{2}-k,\frac{j+1}{2}-k} \quad \text{ if } j \geq i+1 \text{ is odd}.
\end{align*}

For $\mathfrak{g}_{2}'$ we have the following:

If $n$ is odd, let $m = \frac{n+1}{2}$ and define $\phi: R' \rightarrow R'$ by having $\phi$ permute the indices of the $x_i$ and $y_i$ as follows: $1 \mapsto 1$. If $i$ is odd and $i \neq 1$, $i \mapsto n + 1 - \frac{i-1}{2}$. If $i$ is even, $i \mapsto 1 + \frac{i}{2}$. Thus,
\begin{align*}
  \phi(\Delta_{2,1}) &= -\Delta_{1,2};\\
  \phi(\Delta_{n,n-1}) &= -\Delta_{m,m+1};\\
  \phi(\Delta_{i+1,i-1}) &= \Delta_{n+1-\frac{i}{2},n+2-\frac{i}{2}} \quad \text{ if } i \text{ is even};\\
  \phi(\Delta_{i+1,i-1}) &= -\Delta_{\frac{i-1}{2}+1,\frac{i+1}{2}+1} \quad \text{ if } i \text{ is odd}.
\end{align*}

If $n$ is even, let $m = \frac{n}{2}$ and define $\phi: R' \rightarrow R'$ by having $\phi$ permute the indices of the $x_i$ and $y_i$ as follows: $1 \mapsto 1$. If $i$ is odd and $i \neq 1$, $i \mapsto n + 1 - \frac{i-1}{2}$. If $i$ is even, $i \mapsto 1 + \frac{i}{2}$. Thus,
\begin{align*}
  \phi(\Delta_{2,1}) &= -\Delta_{1,2};\\
  \phi(\Delta_{n,n-1}) &= \Delta_{m+1,m+2};\\
  \phi(\Delta_{i+1,i-1}) &= \Delta_{n+1-\frac{i}{2},n+2-\frac{i}{2}} \quad \text{ if } i \text{ is even};\\
  \phi(\Delta_{i+1,i-1}) &= -\Delta_{\frac{i-1}{2}+1,\frac{i+1}{2}+1} \quad \text{ if } i \text{ is odd}.
\end{align*}

For $\mathfrak{g}_{1}'$ we have the following:

If $n$ is odd, let $m = \frac{n+1}{2}$ and define $\phi: R' \rightarrow R'$ by having $\phi$ permute the indices of the $x_i$ and $y_i$ as follows: If $i$ is odd, $i \mapsto 1 + \frac{i-1}{2}$. If $i$ is even, $i \mapsto n+1 - \frac{i}{2}$. Thus,
\begin{align*}
  \phi(\Delta_{n,n-1}) &= \Delta_{m,m+1};\\
  \phi(\Delta_{i+1,i-1}) &= -\Delta_{\frac{i}{2},\frac{i}{2}+1} \quad \text{ if } i \text{ is even};\\
  \phi(\Delta_{i+1,i-1}) &= \Delta_{n+1-\frac{i+1}{2},n+1-\frac{i-1}{2}} \quad \text{ if } i \text{ is odd}.
\end{align*}

If $n$ is even, let $m = \frac{n}{2}$ and define $\phi: R' \rightarrow R'$ by having $\phi$ permute the indices of the $x_i$ and $y_i$ as follows: If $i$ is odd, $i \mapsto 1 + \frac{i-1}{2}$. If $i$ is even, $i \mapsto n+1 - \frac{i}{2}$. Thus,
\begin{align*}
  \phi(\Delta_{n,n-1}) &= -\Delta_{m,m+1};\\
  \phi(\Delta_{i+1,i-1}) &= -\Delta_{\frac{i}{2},\frac{i}{2}+1} \quad \text{ if } i \text{ is even};\\
  \phi(\Delta_{i+1,i-1}) &= \Delta_{n+1-\frac{i+1}{2},n+1-\frac{i-1}{2}} \quad \text{ if } i \text{ is odd}.
\end{align*} \end{proof}

Note that in the proof of the above lemma, we applied ring automorphisms to the link from Section \ref{Sec Links} rather than using a similar proof to the proof of Lemma \ref{linkgens}. We do this so we can apply these same automorphisms in Lemma \ref{link is sum of monomials} to give an explicit description of the generators of each $J_i$.

However, before we can give an explicit description for the generators of each $J_i$, we must introduce further notation: Let $\{i_1,\dots,i_k\} \subseteq [ 1,n]$. Let $Z_{\{i_1,\dots,i_k\}} = z_{i_1}\dots z_{i_k}.$  Notice that this definition implies that $Z_{\{i_1,\dots,i_k\}}$ is a square free monomial. We define $Z_\emptyset$ as $1$.

\begin{notation}\label{monomials}
  Define $m_{i,j}$ as follows:
  \begin{equation*}
    m_{i,j} = \begin{cases}
    X_{[3,j-1]}Y_{[j,n]}Z_{[2,n]} & \text{if } i = 1 \text{ and } 3 \leq j \leq n+1;\\
    X_{[1,j-1]}Y_{[j,n-2]}Z_{[1,n-1]} & \text{if } i=n \text{ and } 1 \leq j \leq n-1;\\
    X_{[1,j-1]\setminus\{i-1,i+1\}}Y_{[j,n]\setminus\{i-1,i+1\}}Z_{[1,n]\setminus \{i\}} & \text{if } 2 \leq i \leq n-1$ \text{ and } $1 \leq j \leq n+1.
  \end{cases}
  \end{equation*}
\end{notation}

Note that for $i \notin \{1,n\}$, $m_{i,i} = m_{i,i-1}$ and $m_{i,i+1} = m_{i,i+2}$.

\begin{lem}
\label{link is sum of monomials}
Fix an integer $i$ such that $1 \leq i \leq n$. Then $J_i = \mathfrak{a}_i + (\{m_{i,j}\})$ for $j$ such that $3 \leq j \leq n+1\,$ if $\,i = 1$, $1 \leq j \leq n-1\,$ if $\,i = n$, and $\,1 \leq j \leq n+1\,$ otherwise.
\end{lem}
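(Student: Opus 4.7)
The plan is to factor the problem through $R' = \mathbf{k}[x_1,\dots,x_n,y_1,\dots,y_n]$, where Theorem \ref{generators of link} already computes the relevant link, and then to transport the result to $R$ via the ring automorphism of Lemma \ref{a link} combined with multiplication by $Z_* := \prod_{k\neq i} z_k$.

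First I would set up the $z$-factorization. Writing $g_k = z_k g_k'$ with $g_k' \in R'$, we have $\mathfrak{a}_i = (z_k g_k' : k\neq i)$, and every $m_{i,j}$ factors as $m_{i,j} = Z_* \cdot m_{i,j}^{(0)}$ with $m_{i,j}^{(0)}$ a squarefree monomial in the $x$'s and $y$'s; for interior $i$ this is $X_{[1,j-1]\setminus\{i-1,i+1\}}Y_{[j,n]\setminus\{i-1,i+1\}}$. The ring automorphism $\phi$ from Lemma \ref{a link} sends $\mathfrak{g}_i' := (g_k' : k\neq i)$ onto the Section \ref{Sec Links} ideal $\mathfrak{a} = (\Delta_{t,t+1})$ up to sign while fixing $I$, so it induces an isomorphism $\mathfrak{g}_i' : I \cong \mathfrak{a} : I$ in $R'$. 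Theorem \ref{generators of link} identifies $\mathfrak{a} : I$ as $\mathfrak{a} + (\{m_1,\dots,m_{n-1}\})$ for any $n-1$ monomials in $M$ of distinct bidegrees, and a case check against the explicit permutations in Lemma \ref{a link} confirms that $\{\phi(m_{i,j}^{(0)})\}$ is exactly such a family. Pulling back gives $m_{i,j}^{(0)} \in \mathfrak{g}_i' : I$, so $m_{i,j}^{(0)}\Delta_{c,d} \in \mathfrak{g}_i'$ in $R'$; multiplying by $Z_*$ and using $Z_* g_k' = (Z_*/z_k)(z_k g_k') \in \mathfrak{a}_i$ yields $m_{i,j}\Delta_{c,d} \in \mathfrak{a}_i$, establishing $\mathfrak{a}_i + (\{m_{i,j}\}) \subseteq J_i$.

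For the reverse inclusion I would count minimal generators. By Lemma \ref{a link}, $\mathfrak{a}_i$ is a length-$(n-1)$ regular sequence inside $I$ and $\height I = n-1$, so classical linkage gives $J_i/\mathfrak{a}_i \cong \omega_{R/I}$ up to a degree shift. Since $R/I = (R'/I_2(M))[z_1,\dots,z_n]$ and $R'/I_2(M)$ has Cohen-Macaulay type $n-1$ (read off from the Eagon--Northcott resolution), it follows that $J_i$ has $2(n-1)$ minimal generators: $n-1$ from $\mathfrak{a}_i$ of tridegree $(1,1,1)$, and $n-1$ more of tridegree $(a,b,n-1)$ with $a+b = n-2$. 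After the collapsings $m_{i,i} = m_{i,i-1}$ and $m_{i,i+1} = m_{i,i+2}$ noted in the text, our candidate set $\mathfrak{a}_i \cup \{m_{i,j}\}$ supplies exactly $2(n-1)$ elements, and a tridegree and linear-independence argument in the spirit of the end of the proof of Theorem \ref{generators of link}---using primality of $I$ to rule out any nontrivial $\mathbf{k}$-linear combination of the $m_{i,j}^{(0)}$ lying in $\mathfrak{g}_i'$---shows that they form a minimal generating set.

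The main obstacle will be the bookkeeping in the containment step: Lemma \ref{a link} records the automorphism in several subcases depending on $i$ and the parities of $n$ and $i$, and in each one we must verify that the explicit permutation sends $\{m_{i,j}^{(0)}\}$ bijectively onto a family of monomials $X_KY_L \in M$ with $K\cup L = [2,n-1]$ and pairwise distinct bidegrees. This is a finite but tedious index-tracking exercise that follows the formulas recorded in Lemma \ref{a link}.
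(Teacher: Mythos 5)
Your proposal is correct, and the first half coincides with the paper's: both transport Theorem \ref{generators of link} through the permutation automorphisms of Lemma \ref{a link} to identify the $z$-free link $\mathfrak{a}_i':I$ as $\mathfrak{a}_i'$ plus the $n-1$ monomials $m_{i,j}^{(0)}$. Where you diverge is in reintroducing the $z$'s. The paper does this in one stroke with a comparison of Koszul complexes: the map $\gamma_1\colon e_j\mapsto z_je_j'$ gives $u_h=\det(\gamma_1)u_h'$, hence $I_1(u_h)=Z_{[1,n]\setminus\{i\}}I_1(u_h')$, and the mapping cone then yields the exact equality $J_i=\mathfrak{a}_i+Z_{[1,n]\setminus\{i\}}I_1(u_h')$ with no further work. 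You instead prove the two inclusions separately: containment by clearing the $z$'s ($Z_*g_k'=(Z_*/z_k)g_k\in\mathfrak{a}_i$, so $m_{i,j}\Delta_{c,d}\in\mathfrak{a}_i$), and equality by a generator count from graded linkage ($J_i/\mathfrak{a}_i\cong\omega_{R/I}$ up to shift, type $n-1$ from Eagon--Northcott) followed by the linear-independence argument of Theorem \ref{generators of link} (distinct bidegrees plus primality of $I$ reduce everything to $m_{i,j}\notin\mathfrak{a}_i$). Both routes are valid. What the paper's method buys is that the degree bookkeeping is automatic: $\det(\gamma_1)=Z_{[1,n]\setminus\{i\}}$ tells you the new generators sit in $z$-degree exactly $n-1$. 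In your version this is the one point needing genuine care: knowing only that $J_i/\mathfrak{a}_i$ is generated by $n-1$ elements in a single total degree $D$, the containment $m_{i,j}\in J_i\setminus\mathfrak{a}_i$ gives $D\le 2n-3$ but not equality, so you must actually compute the shift in the tri-graded isomorphism $J_i/\mathfrak{a}_i\cong\omega_{R/I}(\delta)$ (routine, since each $g_k$ has tridegree $(1,1,1)$, but not omittable). What your method buys is that it avoids constructing the comparison map of complexes altogether and stays at the level of ideals and Hilbert-function/type data.
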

\begin{proof}
  Let $g_1' = \Delta_{2,1}$, $g_n' = \Delta_{n,n-1}$ and for $2 \leq i \leq n-1$, $g_i' = \Delta_{i+1,i-1}$. And let $\mathfrak{a}_i' = (g_1', \dots, \widehat{g}_i',\dots,g_n')$, $J_i' = a_i':I$.

  Let $M = \begin{cases}
      \{X_KY_L \: \vert \: K\cup L = [3,n], K\cap L = \emptyset \}& \text{if } \:i = 1 \\
      \{X_KY_L \: \vert \: K\cup L = [1,n-2],K\cap L = \emptyset \}& \text{if }\: i = n \\
      \{X_KY_L \: \vert \: K\cup L = [1, n] \setminus\{i+1,i-1\}, K\cap L = \emptyset \}&  \text{if }\: 2 \leq i \leq n-1.
   \end{cases}$

  Using the maps $\phi$ from the proof of \ref{a link} we that $\phi$ maps the indices $\{1,2\}$ to $\{1,n\}$ if $i=1$, $\phi$ maps the indices $\{n-1,n\}$ to $\{1,n\}$ if $i=n$, and $\phi$ maps the indices $\{i+1,i-1\}$ to $\{1,n\}$ otherwise. Thus, applying \ref{generators of link}, we get that for any subset $\{m_1 \dots m_{n-1}\}$ of $M$ where the bidegre of $m_i$ is not equal to the bidegree $m_j$ for any $i\neq j$, $J_i' = \mathfrak{a}_i'+ (m_1, \dots, m_{n-1})$.

  Thus, $J_i' = \begin{cases}
      \mathfrak{a}_i'+(\{X_{[3,j-1]}Y_{[j,n]}\:\vert \:  3 \leq j \leq n+1\})& \text{if }\: i = 1 \\
      \mathfrak{a}_i' + (\{X_{[1,j-1]}Y_{[j,n-2]}\:\vert \: 1 \leq j \leq n-1\})& \text{if }\: i = n \\
      \mathfrak{a}_i'+ (\{X_{[1,j-1\}\setminus\{i-1,i+1\}}Y_{[j,n]\setminus\{i-1,i+1\}}\:\vert \:1 \leq j \leq n+1\}) & \text{if }\: 2 \leq i \leq n-1.
   \end{cases}$

   Note that in the last case $j = i$ gives the same element as $j = i-1$, similarly $j=i+1$ gives the same element as $j = i+2$.

   Let $\underline{g}_i = g_1, \dots, \widehat{g}_i,\dots,g_n$ and $\underline{g}_i' = g_1', \dots, \widehat{g}_i',\dots,g_n'$. Let $(F_\bullet, \lambda_\bullet)$ be a minimal, homogeneous, free $R$-resolution of $R/I$. Note that $I$ is a perfect ideal \cite[Corollary 2.8]{BV}, so the length of $F_\bullet$ is $n-1$. Let $K_\bullet(\underline{g}_i)$ be the Koszul complex of $\underline{g}_i$ and $K_\bullet(\underline{g}_i')$ be the Koszul complex of $\underline{g}_i'$. Let $e_j$ be the basis element of $K_1(\underline{g}_i)$ that maps to $g_j$ and $e_j'$ be the basis element of $K_1(\underline{g}_i')$ that maps to $g_j'$.

   Define $\gamma_\bullet \: : \: K_\bullet(\underline{g}_i) \rightarrow K_\bullet(\underline{g}_i')$ so that $\gamma_1: e_j \mapsto z_je_j'$ and $\gamma_k = \wedge^k \gamma_1$. Define $u_\bullet' \: : \: K_\bullet(\underline{g}_i') \rightarrow F_\bullet\;$  to be any  morphism of complexes such that $u_0'$ is the identity map on $R$. Define $ u_\bullet \: : \: K_\bullet(\underline{g}_i) \rightarrow F_\bullet\ $ as $u_\bullet = u'_\bullet \circ \gamma_\bullet$.

  Let $h = \grade(I) = n-1$. Since $I$ is a perfect ideal \cite[Corollary 2.8]{BV}, we may use the mapping cone construction to see that $J_i = (\underline{g}_i) + I_1(u_h) = \mathfrak{a}_i+I_1(u_h)$ and $J_i' = (\underline{g}_i')+I_1(u_h') = \mathfrak{a}_i'+I_1(u_h')$. We see that
  \begin{equation*}
    u_h = \wedge^h (u_1'\circ \gamma_1) = \det(\gamma_1)u_h',
  \end{equation*}
 so
 \begin{equation*}
   I_1(u_h) = det(\gamma_1)I_1(u_h') = Z_{[1,n]\setminus\{i\}}I_1(u_h').
 \end{equation*}

   Thus it follows that
   \begin{equation*}
     J_i = \begin{cases}
         \mathfrak{a}_i +(\{X_{[3,j-1]}Y_{[j,n]}Z_{[2,n]}\:\vert \:  3 \leq j \leq n+1\})& \text{if }\: i = 1 \\
         \mathfrak{a}_i + (\{X_{[1,j-1]}Y_{[j,n-2]}Z_{[1,n-1]}\:\vert \: 1 \leq j \leq n-1\})& \text{if }\: i = n \\
         \mathfrak{a}_i+ (\{X_{[1,j-1\}\setminus\{i-1,i+1\}}Y_{[j,n]\setminus\{i-1,i+1\}}Z_{[1,n]\setminus\{i\}}\:\vert \:1 \leq j \leq n\}) & \text{if }\: 2 \leq i \leq n-1.
      \end{cases}
   \end{equation*} \end{proof}

\begin{cor}\label{set-M}
  For $1 \leq i \leq n$, define the set $M_i$ as follows:
  \begin{equation*}
    M_i = \begin{cases}
        \{X_KY_LZ_{[2,n]} \: \vert \: K\cup L = [3,n], K\cap L = \emptyset \}& \text{if } \:i = 1 \\
        \{X_KY_LZ_{[1,n-2]} \: \vert \: K\cup L = [1,n-2],K\cap L = \emptyset \}& \text{if }\: i = n \\
        \{X_KY_LZ_{[1,n]\setminus\{i\}} \: \vert \: K\cup L = [1, n] \setminus\{i+1,i-1\}, K\cap L = \emptyset \}&  \text{if }\: 2 \leq i \leq n-1.
     \end{cases}
  \end{equation*}
  Then, $J_i = \mathfrak{a}_i+(M_i)$.
\end{cor}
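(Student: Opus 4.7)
The approach is to combine Lemma \ref{link is sum of monomials} with a minor strengthening of Theorem \ref{generators of link}. Only the inclusion $J_i \supseteq \mathfrak{a}_i + (M_i)$ carries content; the reverse inclusion is immediate, because the monomials $m_{i,j}$ of Notation \ref{monomials} form a subset of $M_i$, so Lemma \ref{link is sum of monomials} already gives $J_i = \mathfrak{a}_i + (\{m_{i,j}\}) \subseteq \mathfrak{a}_i + (M_i)$.

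To show $M_i \subseteq J_i$, I first promote Theorem \ref{generators of link} to the statement that every element of the monomial set $M$ appearing in that theorem lies in the link $\mathfrak{a}:I$. Indeed, any $m \in M$ has bidegree $(k, n-2-k)$ for some $0 \le k \le n-2$; by choosing one monomial of $M$ of each of the remaining $n-2$ bidegrees, I can extend $\{m\}$ to a family of $n-1$ monomials of $M$ with pairwise distinct bidegrees, and Theorem \ref{generators of link} applied to this family immediately forces $m \in \mathfrak{a}:I$. Thus the entire set $M$ is contained in $\mathfrak{a}:I$.

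Next I would transfer this containment to $J_i$ using the machinery already set up in Lemma \ref{link is sum of monomials}. The ring automorphism $\phi$ of $R'$ constructed in Lemma \ref{a link} permutes $x$- and $y$-indices identically, so it preserves $I$, and by design carries $\mathfrak{a}_i'$ to the ideal $(\{\Delta_{t,t+1}\})$ of Section \ref{Sec Links} up to signs on generators; hence $\phi^{-1}$ sends $\mathfrak{a}:I$ to $J_i'$, and so every element of $\phi^{-1}(M)$ lies in $J_i'$. Inspection of $\phi$ shows that $\phi^{-1}(M)$ is exactly the set of $X_KY_L$ factors occurring in $M_i$. The mapping-cone identity $I_1(u_h) = Z_{[1,n]\setminus\{i\}}\,I_1(u_h')$ derived in the proof of Lemma \ref{link is sum of monomials}, together with the containment $Z_{[1,n]\setminus\{i\}}\,\mathfrak{a}_i' \subseteq \mathfrak{a}_i$ (each $g_j = z_j g_j'$), then gives $Z_{[1,n]\setminus\{i\}}\,J_i' \subseteq J_i$. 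Multiplying each element of $\phi^{-1}(M)$ by the appropriate $Z$-monomial produces the elements of $M_i$ and yields $M_i \subseteq J_i$.

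The main obstacle is purely bookkeeping: one must track $\phi$ through the three cases $i=1$, $i=n$, and $2 \le i \le n-1$ and verify that $\phi^{-1}([2,n-1])$ is the complementary index set $[3,n]$, $[1,n-2]$, or $[1,n]\setminus\{i-1,i+1\}$ respectively appearing in the definition of $M_i$, and that the $Z$-monomial exponents in $M_i$ match the factor $Z_{[1,n]\setminus\{i\}}$ produced by the determinant of $\gamma_1$ in the boundary cases. Once those case checks are made, the proof is a direct assembly of results already established.
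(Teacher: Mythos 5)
Your argument is precisely the paper's intended proof: the paper disposes of this corollary with the single remark that it ``follows directly from the proof of Lemma \ref{link is sum of monomials}'', and what you have written is exactly that proof unpacked --- every monomial of the set $M$ in Theorem \ref{generators of link} lies in the link (your ``promotion'' step, which is in fact already established in the first paragraph of that theorem's proof), transport by $\phi^{-1}$, and multiplication by $Z_{[1,n]\setminus\{i\}}$ via the mapping-cone identity $I_1(u_h) = Z_{[1,n]\setminus\{i\}} I_1(u_h')$ together with $Z_{[1,n]\setminus\{i\}}\mathfrak{a}_i' \subseteq \mathfrak{a}_i$. The only discrepancy is not yours: your argument produces $Z_{[1,n-1]}$ in the case $i=n$, consistent with Lemma \ref{link is sum of monomials}, so the $Z_{[1,n-2]}$ appearing in the corollary's statement for $i=n$ is a typo.
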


The proof of the above corollary follows directly from the proof of Lemma \ref{link is sum of monomials}. We state this corollary, as the Gr\"obner basis of $\sum_i^nJ_i$ which we compute contains $\bigcup_{i=1}^n M_i$. While it is possible to further reduce this Gr\"obner basis, the computation of a reduced Gr\"obner basis requires extra work and is unnecessary for the proof of our main result.

\begin{lem}\label{geometric link}
  Each $J_i$ is a geometric link.
\end{lem}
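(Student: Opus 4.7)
The plan is to use the standard characterization: a link $J_i = \mathfrak{a}_i : I$ is a \emph{geometric} link precisely when $\height(I + J_i) \geq \height(I) + 1$. Since $I$ is the ideal of $2 \times 2$ minors of a generic $2 \times n$ matrix we have $\height(I) = n-1$, and Lemma \ref{a link} already supplies that $J_i$ is a link. So the entire task reduces to proving $\height(I + J_i) \geq n$.

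The key observation I would exploit is that $I$ is prime in $R$. It is prime in $R' = \mathbf{k}[x_1,\ldots,x_n,y_1,\ldots,y_n]$ by \cite[Theorem 2.10]{BV}, and remains prime after adjoining the $z_j$ because
\begin{equation*}
  R/I \cong (R'/I)[z_1,\ldots,z_n]
\end{equation*}
is a polynomial ring over a domain. For a prime ideal $I$ in a Noetherian ring, every prime strictly containing $I$ has height at least $\height(I) + 1$, so $\height(I + J_i) \geq n$ will follow once we exhibit a single element of $J_i$ not lying in $I$.

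To produce such an element I would invoke Corollary \ref{set-M}: every element of $M_i \subseteq J_i$ has the form $m = X_K Y_L Z_S$, where $S = [1,n]$ or $S = [1,n] \setminus \{i\}$, so the $Z$-part is a nonempty squarefree monomial in the $z_j$. Under the isomorphism $R/I \cong (R'/I)[z_1,\ldots,z_n]$, the image of $m$ is a $z$-monomial with coefficient $\overline{X_K Y_L}$, so $m \in I$ if and only if $X_K Y_L \in I$. But the generators of $I$ are bihomogeneous of bidegree $(1,1)$, hence no $x_j$ or $y_j$ lies in $I$, and since $I$ is prime the squarefree monomial $X_K Y_L$ is not in $I$ either. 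Consequently $m \notin I$, giving $I + J_i \supsetneq I$, and the height bound follows.

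There is no substantive obstacle here; once Lemma \ref{a link} and Corollary \ref{set-M} are in hand, the argument is essentially formal. The one thing to check is that $M_i$ is nonempty for every $1 \leq i \leq n$ when $n \geq 4$, which is immediate from the case analysis in Notation \ref{monomials} (e.g., take $K = [3,n]$, $L = \emptyset$ when $i = 1$, and analogous choices otherwise).
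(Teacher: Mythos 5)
Your proposal is correct and follows essentially the same route as the paper: both arguments use Lemma \ref{a link} for linkage, exhibit a monomial in $J_i$ via the explicit generators, and use primality of $I$ (which contains no monomials since it is generated in degree two) to conclude $J_i \not\subseteq I$ and hence $\height(I+J_i)\geq \height(I)+1$. Your version is slightly more careful in justifying why $I$ remains prime after adjoining the $z_j$ and why a squarefree monomial cannot lie in $I$, but this is a presentational refinement rather than a different argument.
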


\begin{proof}
  By Lemma \ref{a link}, each $J_i$ is link, moreover by Lemma \ref{link is sum of monomials} each $J_i$ contains a monomial. Note that $I$ is a prime ideal \cite[Theorem 2.10]{BV} generated by degree two polynomials. Thus, $I$ cannot contain a monomial, as if $I$ contained a monomial, it would have to contain a degree one polynomial, which is a contradiction. So $J_i \not\subseteq I$, and since $I$ is prime, that means that $\height(I+J_i) \geq \height(I)+1$, equivalently $J_i$ is a geometric link. \end{proof}

Now that we have computed the generators of $J_i$ and proven that each $J_i$ is a geometric link, we must show that $J$ is an $n$-residual intersection. In order to do so, we will use graph theoretic results about binomial edge ideals from \cite{HHHKR}. Thus, we must briefly review some terminology and introduce some notation.

We call a graph \emph{simple} if it has no loops and no multiple edges. Let $G$ be a simple graph on the vertex set $[1,n]$. The \emph{binomial edge ideal} of $G$, $J_G$, is the ideal generated by $\Delta_{i,j}$ where $\{i,j\}$ is an edge in $G$. Notice that $\mathfrak{a}$ and $I$, as well as $\mathfrak{a}_i$ for all $1 \leq i \leq n$, are binomial edge ideals. Thus we may apply the results from \cite{HHHKR} to our ideals.

\begin{lem}\label{is Res Int}
  The height of the ideal $J_n + (g_n)$ is at least $n$.
\end{lem}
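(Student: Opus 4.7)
The plan is to show that every prime $P \subset R$ containing $J_n + (g_n)$ satisfies $\height(P) \geq n$, by case analysis on $S := \{i \in [1,n] : z_i \in P\}$. Since each generator $g_i = z_i \Delta_{\phi(i)}$ of $\mathfrak{a}$ lies in $P$---where $\phi(1) = \{1,2\}$, $\phi(n) = \{n-1,n\}$, and $\phi(i) = \{i-1,i+1\}$ for $2 \leq i \leq n-1$---primality forces $i \in S$ or $\Delta_{\phi(i)} \in P$. Setting $T := [1,n] \setminus S$ and letting $G_T$ be the graph on vertex set $[1,n]$ with edges $\{\phi(i) : i \in T\}$, we obtain $J_{G_T} \subseteq P \cap R'$, where $R' := \mathbf{k}[x_1,\ldots,x_n,y_1,\ldots,y_n]$. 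The full graph $G$ (the case $T = [1,n]$) has $n$ edges, every vertex of degree two, and is connected, hence is the $n$-cycle $C_n$.

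If $S \neq \emptyset$, the subgraph $G_T \subsetneq C_n$ is a forest with $c(G_T) = n - |T|$ components. The standard binomial edge ideal height formula $\height(J_H) = n - c(H)$ from \cite{HHHKR} gives $\height_{R'}(J_{G_T}) = |T|$. Since the $z_i$ for $i \in S$ live in variables disjoint from $R'$, heights add and $\height(P) \geq |S| + |T| = n$.

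If $S = \emptyset$, then $J_{C_n} \subseteq P \cap R'$, so $P \cap R'$ contains some minimal prime $Q$ of $J_{C_n}$. The \cite{HHHKR} classification expresses each minimal prime as $P_T(C_n)$ with $\height(P_T) = n + |T| - c(C_n \setminus T)$. For $T = \emptyset$ this is $P_\emptyset = I_2(M)$ of height $n-1$, while for any $T \neq \emptyset$ the bound $c(C_n \setminus T) \leq |T|$ yields $\height(P_T) \geq n$. If $Q \neq I_2(M)$ we are done, as $\height(P) \geq \height_{R'}(Q) \geq n$. Otherwise $P \cap R' \supseteq I_2(M)$, and I would close by using that $P$ contains each monomial $m_{n,j} = X_{[1,j-1]} Y_{[j,n-2]} Z_{[1,n-1]}$ from $J_n$ (Lemma \ref{link is sum of monomials}); since $S = \emptyset$ gives $Z_{[1,n-1]} \notin P$, primality forces $X_{[1,j-1]} Y_{[j,n-2]} \in P$ for every $j$. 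Taking $j = 1$ yields $y_1 \cdots y_{n-2} \in P$, which does not lie in the prime $I_2(M)$ (generated by bidegree $(1,1)$ binomials, hence containing no variable). Therefore $P \cap R' \supsetneq I_2(M)$, and $\height(P) \geq \height_{R'}(P \cap R') \geq n$.

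The main obstacle is invoking the binomial edge ideal machinery from \cite{HHHKR} precisely: the height formula $\height(J_H) = n - c(H)$, the classification of minimal primes as $P_T(G)$, and the verification that for $G = C_n$ only $P_\emptyset = I_2(M)$ has height below $n$. Given these facts, the case analysis and the closing monomial argument in the $S = \emptyset$ case are routine.
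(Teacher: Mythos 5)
Your argument is correct, but it takes a genuinely different route from the paper's. The paper first uses that $J_n$ is a geometric link to reduce the claim to showing that $g_n$ lies in no minimal prime $p \neq I$ of the complete intersection $\mathfrak{a}_n$, and then analyzes those minimal primes through the $P_S(G)$ classification of \cite[Theorem 3.2]{HHHKR} applied to the path graph underlying $\mathfrak{a}_n$: the endpoints $n-1$ and $n$ of that path cannot lie in the cut set, so $\Delta_{n,n-1}$ (and $z_n$) survive in every $p \neq I$. You instead bound the height of every prime $P \supseteq J_n + (g_n)$ directly, exploiting that any such $P$ contains all of $\mathfrak{a}$, whose underlying graph is the $n$-cycle. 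Your stratification by $S = \{i : z_i \in P\}$ then either gives $\height(P) \geq |S| + |T| = n$ outright, or (when $S = \emptyset$) forces $I_2(M) \subseteq P \cap R'$, which you exclude using the monomial generators $m_{n,j}$ of $J_n$ from Lemma \ref{link is sum of monomials}: since no $z_i$ lies in $P$, primality puts $y_1\cdots y_{n-2}$ in $P \cap R'$, and this monomial is not in the prime $I_2(M)$. That last step is a nice alternative to the paper's use of the geometric-link property; both routes ultimately rest on the \cite{HHHKR} description of minimal primes of binomial edge ideals.

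One caveat: the ``standard height formula $\height(J_H) = n - c(H)$'' you cite is not a theorem of \cite{HHHKR} for arbitrary graphs; it already fails for the star $K_{1,3}$, whose binomial edge ideal is contained in $(x_c,y_c)$ and has height $2$, not $3$. It does hold for the graphs you apply it to (disjoint unions of paths), but the cleanest justification available in this paper is that for $T \subsetneq [1,n]$ the minors occurring in $\{g_i : i \in T\}$ form a subset of one of the regular sequences $\mathfrak{g}_j'$ of Lemma \ref{a link}, hence are themselves a regular sequence, so $\height_{R'}(J_{G_T}) = |T|$ exactly. With that substitution your proof is complete.
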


\begin{proof}
As $J_n = \mathfrak{a}_n:I$ is geometric link by Lemma \ref{geometric link}, and $I$ is a prime ideal \cite[Theorem 2.10]{BV}, $\Ass(J_n) = \Ass(\mathfrak{a}_n)\backslash \{I\}$. Since $\mathfrak{a}_n$ is a generated by a regular sequence, the set of its associated primes is equivalent to the set of its minimal primes. Thus, all we need to show is that the $g_n$ is not contained in any minimal prime $p \neq I$ of $\mathfrak{a}_n$.

Let $g_1' = \Delta_{2,1}$, $g_n' = \Delta_{n,n-1}$ and for $2 \leq i \leq n-1$, $g_i' = \Delta_{i+1,i-1}$. Let $p \neq I$ be a minimal prime of $\mathfrak{a}_n$ and let $T \subseteq [1,n-1]$ be exactly the set of indices such that $z_i$, for $i \in T$, is in $p$. Note that we are allowing $T$ to be the empty set. Now we localize $R$ at $p$. Note that $z_i$, for $i \notin T$, becomes a unit in $R_p$. So, $p_p = (\{z_i \; \vert  \; i \in T\}) + p'_p$ where $p'$ is a minimal prime of $\mathfrak{b} = (\{g_i' \; \vert  \; i \in [1,n-1]\backslash T\})$. Note that $\mathfrak{b} = J_G$, the binomial edge ideal for a simple graph $G$.

Let $\mathfrak{a}_n'  = (g_1',\dots,g_{n-1}')$. It is easy to see that $\mathfrak{a}_n' = J_F$ is the binomial edge ideal for a simple graph $F$ where $F$ is a line with endpoints $n$ and $n-1$. Note that $G$ is a subgraph of this line.

By \cite[Theorem 3.2]{HHHKR} $p' = P_S(G) = (\bigcup_{i\in S}\{x_i, y_i\}, J_{G_1'},\dots,J_{G_t'})$ for some $S \subseteq [1,n]$ where $G_{[1,n-1]\backslash S}$ is the restriction of $G$ to $[1,n-1]\backslash S$ whose edges are exactly those edges $\{i,j\}$ of $G$ for which $\{i,j\} \subseteq [1,n-1]\backslash S$; $G_1, \dots, G_t$ are the connected components of $G_{[1,n-1]\backslash S}$; and $G_i'$ is the complete graph of the vertex set $V(G_i)$.

Suppose $n$ or $n-1$ is in $S$. Let $S' = S\backslash \{n,n-1\}$. Note, by \cite[Theorem 3.2]{HHHKR} $\mathfrak{b} \subset P_{S'}(G)$. Let $G_1,\dots,G_t$ be the connected components of $G_{[1,n-1]\backslash S}$ and $H_1, \dots H_{t'}$ be the connected components of $G_{[1,n]\backslash S'}$. Note that since $n$ and $n-1$ are on the endpoints of the line segments contained in the graph $G$, removing them does not split any connected components.

Thus, for all $i \in [1,t']$, $V(H_i)\backslash S \subseteq V(G_j)$ for some $j \in [1,t]$. So, by \cite[Proposition 3.8]{HHHKR} $P_{S'}(G) \subsetneq P_{S}(G)$, which is a contradiction as $p'$ is a minimal prime of $\mathfrak{b}$. So $n$ and $n-1$ are not contained in $S$.

For $n$ and $n-1$ to be in the vertex set of the same connected component of $G_{[1,n-1]\backslash S}$, both $S$ and $T$ have to be the empty set, but in that case $P_{S}(G) = I$, so $p_p = I_p$, thus $p = I$, which is a contradiction. So $g_n'$ is not a generator of $p'$, and $z_n \notin p$, thus $g_n \notin p$ and we are done. \end{proof}

The above lemma not only implies that $J$ is an $n$-residual intersection, but it also gives us that $\height(\sum_{i=1}^n J_i)\geq n$.

\subsection{Gr\"obner Basis}\label{sec-p2-gb}

Now, we will compute a Gr\"obner basis of $\sum_{i=1}^n J_i$. The monomial ordering we use to do so is reverse lexicographical order with $x_1>x_2>\dots>x_n>y_1>y_2>\dots>y_n>z_1>z_2>\dots>z_n$.  

Here we will briefly review some facts about Gr\"obner bases, but we refer the reader to \cite{HH} for a more detailed exploration. 

The \emph{initial term} of a polynomial $f$ is defined to the term with the largest monomial and denoted as $\initial(f)$. For an $R$-ideal $I$ we define the \emph{initial ideal} of $I$ as the ideal generated by the initial terms of all the polynomials in $I$,  $\initial(I) = (\{ \initial(f) \;\vert\; f \in I\})$. Critical to the results of this paper is the fact that, if $\initial(I)$ is a squarefree monomial ideal, then $I$ is reduced \cite[Proposition 3.3.7]{HH}.

The Gr\"obner basis is used to compute the initial ideal. A set of nonzero $R$-polynomials $G$ is said to be a \emph{Gr\"obner basis} of an $R$-ideal $I$ if  $\initial(I) = (\{\initial(g) \;\vert\; g \in G\})$. In order to calculate a Gr\"obner basis, we will be using Buchberger’s Criterion. To give Buchberger’s Criterion, we first need to define the support of a polynomial and $S$-polynomials, as well as give the division algorithm.

If we let $M$ be the set of all monomials in $R$, then any polynomial $f$ can be written as a unique $\mathbf{k}$-linear combination of those monomials, $f = \sum_{u \in M}c_u u$, and we define the \emph{support} of $f$ as $\support(f) = \{ u \in N \;\vert\; c_u \neq 0\}$.
\begin{thm}\cite[Theorem 2.2.1]{HH}
  Let $f_1,f_2,\dots,f_n$ be nonzero polynomials of $R$. Then, for any nonzero polynomial $h \in R$, there exist polynomials $h_1,h_2,\dots,h_n$ and $h'$ of $R$ such that
  \begin{equation*}
    h = h'+ \sum_{i=1}^n h_if_i,
  \end{equation*}
  where:
  \begin{enumerate}[label=$($\alph*$)$]
    \item if $h' \neq 0$ and $u \in \support(h')$, then $u \notin (\initial(f_1),\initial(f_2),\dots,\initial(f_n));$
    \item if $h_i \neq 0$, then $\initial(h) \geq \initial(h_if_i)$.
  \end{enumerate}
\end{thm}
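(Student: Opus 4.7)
The plan is to construct $h'$ and $h_1,\dots,h_n$ by running the multivariate division algorithm, and then to verify the two conditions by direct bookkeeping. Initialize $r := h$, $h' := 0$, and $h_i := 0$ for each $i$. At each step, if $r = 0$ we stop; otherwise write $\initial(r) = c u$ with $c \in \mathbf{k}$ nonzero and $u$ a monomial. If $u$ is divisible by the leading monomial of some $f_i$, pick the smallest such $i$, write $u = v \cdot u_i$ where $u_i$ is the leading monomial of $f_i$, and set $t = (c/c_i) v$ where $c_i$ is the leading coefficient of $f_i$; then update $r := r - t f_i$ and $h_i := h_i + t$. Otherwise, move the leading term into $h'$: set $h' := h' + c u$ and $r := r - c u$.

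In both cases the leading term of $r$ is cancelled by construction, so the leading monomial of $r$ strictly decreases in the chosen monomial order after each step. Since a monomial order is a well-order on the monoid of monomials of $R$, this strictly descending sequence must terminate in finitely many steps, necessarily with $r = 0$. Telescoping the updates then gives $h = h' + \sum_i h_i f_i$, with $h'$ and the $h_i$ as accumulated during the procedure.

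For condition $(a)$: every term transferred into $h'$ has its monomial not divisible by any $\initial(f_j)$ at the moment of transfer, and because successive leading monomials of $r$ are strictly decreasing, the monomials contributed to $h'$ are pairwise distinct, so no cancellation occurs inside $h'$ and the condition survives on all of $\support(h')$. For condition $(b)$: each term $t$ added to $h_i$ satisfies $\initial(t f_i) = \initial(r)$ at that step, which is bounded above by $\initial(h)$; since $\initial(h_i f_i)$ is at most the largest such contribution across the steps that modified $h_i$, the inequality $\initial(h_i f_i) \leq \initial(h)$ follows.

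The only nontrivial step is termination, which rests entirely on the well-ordering property of the monomial order on the monoid of monomials of $R$; the remainder of the argument is routine tracking of what is produced at each step. The statement is standard in computational commutative algebra and is cited here as \cite[Theorem 2.2.1]{HH}; the sketch above is the usual proof of the multivariate division algorithm.
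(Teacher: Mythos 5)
Your proof is correct and is the standard division-algorithm argument; the paper itself offers no proof of this statement, citing it directly from \cite[Theorem 2.2.1]{HH}, and your sketch reproduces the usual textbook proof, with termination correctly reduced to the well-ordering of monomials and both conditions $(a)$ and $(b)$ verified by tracking the strictly decreasing leading monomials of the intermediate remainders.
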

In this paper we will define the \emph{gcd} of the two terms to be the greatest common divisor with coefficient 1.

 For any two nonzero polynomials $f$ and $g$ of $R$, the polynomial \begin{equation*}
    S(f,g) = \frac{\initial(g)}{\gcd(\initial(f),\initial(g))}f - \frac{\initial(f)}{\gcd(\initial(f),\initial(g))}g
  \end{equation*} is called the \emph{$S$-polynomial} of $f$ and $g$. If, with respect to $R$-polynomials $f_1, \dots, f_n$, the remainder in the division algorithm is zero, we say that $f$ \emph{reduces to 0} with respect to $f_1, \dots, f_n$.

\begin{thm}\cite[Theorem 2.3.2]{HH} (Buchberger's criterion).  Let $I= (f_1,\dots,f_n)$ be a nonzero $R$-ideal. Then $\{f_1,\dots,f_n\}$ is a Gr\"obner basis of I if and only if, for all $i \neq j$, $S(f_i,f_j)$ reduces to 0 with respect to $f_1, \dots, f_n$.
\end{thm}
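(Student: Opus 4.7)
The plan is to establish the two directions of the biconditional separately. The forward direction is short and uses only the defining property of a Gr\"obner basis together with the division algorithm from Theorem 2.2.1. The reverse direction, which is where the real content lies, proceeds by a Noetherian descent on the term order.

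For the forward direction I would argue as follows. Suppose $\{f_1,\dots,f_n\}$ is a Gr\"obner basis of $I$. Since $S(f_i,f_j) \in I$ and $\initial(I) = (\initial(f_1),\dots,\initial(f_n))$ by assumption, the leading monomial of $S(f_i,f_j)$ is divisible by some $\initial(f_k)$. Running the division algorithm produces a representation $S(f_i,f_j) = h' + \sum_k h_k f_k$ in which no term of $h'$ is divisible by any $\initial(f_k)$. Because $h' \in I$, this forces $\initial(h') \in \initial(I) = (\initial(f_1),\dots,\initial(f_n))$, which is incompatible with $h' \neq 0$; hence $h' = 0$ and $S(f_i,f_j)$ reduces to $0$ with respect to $f_1,\dots,f_n$.

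For the reverse direction, assume every $S(f_i,f_j)$ reduces to $0$; I would prove $\initial(I) \subseteq (\initial(f_1),\dots,\initial(f_n))$, the opposite inclusion being immediate. Given a nonzero $f \in I$, among all representations $f = \sum_i h_i f_i$ pick one that minimizes $u := \max_i \initial(h_i f_i)$ in the term order; such a minimum exists because the monoid of monomials is well-ordered. If $\initial(f) = u$, then $u = \initial(h_k f_k)$ for some $k$, so $\initial(f_k) \mid \initial(f)$ and we are done. The remaining case $\initial(f) < u$ forces cancellation among the summands with $\initial(h_i f_i) = u$, and the plan is to use the $S$-polynomial hypothesis to rewrite $f$ with strictly smaller $u$, contradicting minimality.

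The main obstacle is making this rewriting step explicit. I would use the standard telescoping device: among the indices contributing top monomial $u$, pair them off, and for each pair $i,j$ in that group write $c_i \initial(h_i) f_i + c_j \initial(h_j) f_j$ (with $c_i, c_j$ chosen to cancel the leading term) as $v \cdot S(f_i,f_j)$ plus terms whose initial monomials are strictly below $u$, where $v$ is the monomial making the degrees match. By hypothesis, $v \cdot S(f_i,f_j)$ admits a representation $\sum_k h_k' f_k$ with every $\initial(h_k' f_k) < u$. Substituting back eliminates one index from the top group without introducing new top contributions; iterating finitely many times empties the top group and produces a representation of $f$ with maximum strictly below $u$, giving the desired contradiction. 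The coefficient bookkeeping here is delicate, but the combinatorics terminates because the top group is finite at every stage.
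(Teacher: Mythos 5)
Your outline is correct and is the standard proof of Buchberger's criterion: the forward direction via the division algorithm and the defining property of $\initial(I)$, and the converse by taking a representation $f=\sum h_if_i$ minimizing $u=\max_i\initial(h_if_i)$ and using the $S$-polynomial hypothesis, via the telescoping identity, to lower $u$ when cancellation occurs at the top. The paper does not prove this statement but merely cites \cite[Theorem 2.3.2]{HH}, and your argument coincides with the proof given there, so there is nothing to compare beyond noting the match.
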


Now, we will begin computing the Gr\"obner basis of $\sum_{i=1}^n J_i$. 

\begin{notation}\label{more gs}
  Let
  \begin{equation*}
    g_{1,j} = X_{[1,j-1]}Z_{[1,j]}\Delta_{j+1,j} \text{ for } 1 \leq j \leq n-1
  \end{equation*} and
  \begin{equation*}
    g_{j,n} = Y_{[j+1,n]} Z_{[j,n]}\Delta_{j,j-1} \text{ for } 2 \leq j \leq n.
  \end{equation*}
\end{notation}

Note that the distinction between $g_{1,j}$ and $g_{j,n}$ is clear as neither permits the index $({1,n})$. Also note that $g_{1,1} = g_1$ and $g_{n,n} = g_n$.

\begin{notation}\label{set G}
  Let $G = \{g_i \; \vert  \; 1 \leq i \leq n\} \cup \{g_{1,i} \;\vert \: 2 \leq i \leq n-1 \}\cup \{g_{i,n} \;\vert \; 2 \leq i \leq n-1 \}$ for $g_i$ as in \ref{gens of a}, as well as $g_{1,i}$ and $g_{i,n}$ as in \ref{more gs}.
\end{notation}

We will show that $G\cup(\bigcup_{i=1}^n M_i)$ for $M_i$ as in Corollary \ref{set-M} is a Gr\"obner basis of $\sum_{i=1}^n J_i$, however, in order to do so we first argue that $G$ is a Gr\"obner basis of $\mathfrak{a}$, as this allows for more legible proofs. To simply our computations, we begin by proving Lemma \ref{lemma 1}, Lemma \ref{lemma 2}, and Corollary \ref{lemma 3}.

\begin{lem}\label{lemma 1}
For integers $i$ and $j$ such that $1 \leq i < j\leq n$,
\begin{equation*}
  X_{[1,j-1]\setminus\{i\}}Z_{[1,j-1]}\Delta_{j,i} = \sum_{g_\alpha \in G} f_\alpha g_\alpha
\end{equation*} such that, for all $\alpha$, $g_\alpha$ is in $G$, and $\initial(X_{[1,j-1]\setminus\{i\}}Z_{[1,j-1]}\Delta_{j,i}) = X_{[1,j]\setminus\{i\}}Y_{\{i\}}Z_{[1,j-1]} \geq \initial(f_\alpha g_\alpha)$.
\end{lem}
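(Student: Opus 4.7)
The plan is to induct on the index gap $d = j-i$, which is the natural parameter since each element of $G$ involves $\Delta_{k,k-1}$ or $\Delta_{k+1,k-1}$ (gaps of $1$ or $2$), and we want to reduce $\Delta_{j,i}$ down to a gap-one minor using the ``three-term'' Pl\"ucker style identity
\begin{equation*}
  x_{j-1}\Delta_{j,i} \;=\; x_j\,\Delta_{j-1,i} \;+\; x_i\,\Delta_{j,j-1},
\end{equation*}
which is easily verified by expanding. Before starting the induction I would check the claimed initial term: since $i<j$ and $x_j > y_i > x_i$ is not directly relevant, the reverse-lex comparison of $x_jy_i$ against $x_iy_j$ goes in favor of $x_jy_i$ (the difference has its last nonzero coordinate at $y_j$, which is negative), so $\initial(\Delta_{j,i}) = x_jy_i$ and hence $\initial\bigl(X_{[1,j-1]\setminus\{i\}}Z_{[1,j-1]}\Delta_{j,i}\bigr) = X_{[1,j]\setminus\{i\}}Y_{\{i\}}Z_{[1,j-1]}$, matching the statement.

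For the base case $d=1$ we have $i=j-1$, so the expression becomes $X_{[1,j-2]}Z_{[1,j-1]}\Delta_{j,j-1}$. If $j=2$ this is $z_1\Delta_{2,1}=g_1\in G$; if $j\geq 3$ this is exactly $g_{1,j-1}\in G$ from Notation~\ref{more gs}. In either case the expression is a single element of $G$ whose initial term is the one computed above, so the base case holds trivially.

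For the inductive step with $d\geq 2$, the key observation is that $j-1\neq i$, so $x_{j-1}$ appears in $X_{[1,j-1]\setminus\{i\}}$ and we may factor it out. Applying the identity above gives
\begin{equation*}
  X_{[1,j-1]\setminus\{i\}}Z_{[1,j-1]}\Delta_{j,i} \;=\; x_j z_{j-1}\cdot\bigl(X_{[1,j-2]\setminus\{i\}}Z_{[1,j-2]}\Delta_{j-1,i}\bigr) \;+\; X_{[1,j-2]}Z_{[1,j-1]}\Delta_{j,j-1},
\end{equation*}
where in the second term I have absorbed $x_i$ into $X_{[1,j-2]\setminus\{i\}}$ to get $X_{[1,j-2]}$. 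The second term is exactly $g_{1,j-1}\in G$. The parenthesized factor in the first term has gap $d-1$, so the inductive hypothesis writes it as $\sum f_\alpha g_\alpha$ with each $\initial(f_\alpha g_\alpha)\leq X_{[1,j-1]\setminus\{i\}}Y_{\{i\}}Z_{[1,j-2]}$; multiplying by $x_jz_{j-1}$ preserves the inequality and raises the bound to exactly $X_{[1,j]\setminus\{i\}}Y_{\{i\}}Z_{[1,j-1]}$, the claimed initial term.

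The only real thing to check is that the second summand $g_{1,j-1}$ also satisfies the initial-term bound, and this is the step I expect to be the main (albeit still routine) obstacle because it depends on the reverse-lex comparison. Its initial term is $X_{[1,j-2]\cup\{j\}}\,y_{j-1}\,Z_{[1,j-1]}$, which differs from the target $X_{[1,j]\setminus\{i\}}\,y_i\,Z_{[1,j-1]}$ only by swapping $x_{j-1}y_i$ for $x_iy_{j-1}$. In reverse-lex with $x_1>\cdots>x_n>y_1>\cdots>y_n>z_1>\cdots>z_n$, the last nonzero coordinate of (target $-$ second) lies at $y_{j-1}$ (since $j-1>i$ puts $y_{j-1}$ later in the order than $y_i$) and equals $-1$, so the target dominates. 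Combining both pieces yields a decomposition of the desired form with the correct initial-term control, completing the induction.
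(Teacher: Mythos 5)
Your proof is correct and is essentially the same as the paper's: unrolling your induction on $j-i$ reproduces exactly the paper's explicit telescoping decomposition $X_{[1,j-1]\setminus\{i\}}Z_{[1,j-1]}\Delta_{j,i} = \sum_{k=i}^{j-1}X_{[k+2,j]}Z_{[k+1,j-1]}g_{1,k}$, and your initial-term comparisons match the paper's.
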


\begin{proof}

Note that for $i \leq k \leq j-1$,
\begin{equation*}
  X_{[k+2,j]}Z_{[k+1,j-1]}g_{1,k} = X_{[1,j]\setminus\{k\}}Y_{\{k\}}Z_{[1,j-1]}  -  X_{[1,j]\setminus\{k+1\}}Y_{\{k+1\}}Z_{[1,j-1]},
\end{equation*}
which gives us,
\begin{equation*}
  X_{[1,j-1]\setminus\{i\}}Z_{[1,j-1]}\Delta_{j,i} = \sum_{k=i}^{j-1}X_{[k+2,j]}Z_{[k+1,j-1]}g_{1,k}.
\end{equation*}
Moreover, for $i \leq k \leq j-1$, we have that,
\begin{eqnarray*}
  \initial(X_{[k+2,j]}Z_{[k+1,j-1]}g_{1,k}) &=& X_{[1,j]\setminus\{k\}}Y_{\{k\}}Z_{[1,j-1]} \\
  &\leq& X_{[1,j]\setminus\{i\}}Y_{\{i\}}Z_{[1,j-1]}\\
  &=& \initial(X_{[1,j-1]\setminus\{i\}}Z_{[1,j-1]}\Delta_{j,i}).
\end{eqnarray*}
Thus, we are done. \end{proof}

\begin{lem}\label{lemma 2}
  For integers $i$ and $j$ such that $1 \leq j < i \leq n$,
  \begin{equation*}
    Y_{[j+1,n]\setminus\{i\}}Z_{[j+1,n]}\Delta_{i,j} = \sum_{g_\alpha \in G} f_\alpha g_\alpha,
  \end{equation*} such that, for all $\alpha$, $g_\alpha$ is in $G$, and $\initial(Y_{[j+1,n]\setminus\{i\}}Z_{[j+1,n]}\Delta_{i,j}) = X_{\{i\}} Y_{[j,n]\setminus\{i\}} Z_{[j+1,n]} \geq \initial(f_\alpha g_\alpha)$.
\end{lem}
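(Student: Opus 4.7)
The plan is to mirror the proof of Lemma \ref{lemma 1}: that argument telescopes using the ``left-end'' generators $g_{1,k}$, and for this statement I will telescope using the ``right-end'' generators $g_{k,n}$. Concretely, for each $k$ with $j+1 \le k \le i$ I will multiply $g_{k,n} = Y_{[k+1,n]} Z_{[k,n]} \Delta_{k,k-1}$ by $h_k := Y_{[j,k-2]} Z_{[j+1,k-1]}$ (with the empty-product convention giving $h_{j+1} = 1$). The index sets combine cleanly into $Y_{[j,n]\setminus\{k-1,k\}} Z_{[j+1,n]}$, and expanding $\Delta_{k,k-1} = x_k y_{k-1} - x_{k-1} y_k$ yields
\begin{equation*}
h_k\, g_{k,n} = X_{\{k\}} Y_{[j,n]\setminus\{k\}} Z_{[j+1,n]} - X_{\{k-1\}} Y_{[j,n]\setminus\{k-1\}} Z_{[j+1,n]}.
\end{equation*}

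Summing from $k = j+1$ to $k = i$ telescopes to $X_{\{i\}} Y_{[j,n]\setminus\{i\}} Z_{[j+1,n]} - X_{\{j\}} Y_{[j+1,n]} Z_{[j+1,n]}$, and a direct expansion of $\Delta_{i,j} = x_i y_j - x_j y_i$ shows this equals $Y_{[j+1,n]\setminus\{i\}} Z_{[j+1,n]} \Delta_{i,j}$. Since each $g_{k,n}$ with $j+1 \le k \le i$ lies in $G$ (where the endpoint $k = n$ is covered by the identification $g_{n,n} = g_n$), this produces the required expression $\sum_\alpha f_\alpha g_\alpha$.

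For the initial-term bound I will use that $\initial(\Delta_{k,k-1}) = x_k y_{k-1}$ under the given reverse-lex ordering, so $\initial(h_k g_{k,n}) = X_{\{k\}} Y_{[j,n]\setminus\{k\}} Z_{[j+1,n]}$. Comparing this to $X_{\{i\}} Y_{[j,n]\setminus\{i\}} Z_{[j+1,n]}$, the last nonzero entry of the exponent difference occurs at $y_i$ with value $-1$, giving equality when $k = i$ and strict inequality when $k < i$, hence the required bound. The only real thing to watch is the degenerate boundary $k = j+1$, where both factors of $h_k$ collapse to $1$ and the first summand is simply $g_{j+1,n}$; apart from identifying the correct ``mirrored'' multipliers $h_k$, the calculation is entirely formal and parallels Lemma \ref{lemma 1}.
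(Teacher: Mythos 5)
Your proposal is correct and is essentially identical to the paper's proof: the paper also writes $Y_{[j+1,n]\setminus\{i\}}Z_{[j+1,n]}\Delta_{i,j}$ as the telescoping sum $\sum_{k=j+1}^{i} Y_{[j,k-2]}Z_{[j+1,k-1]}g_{k,n}$ and then compares initial terms exactly as you do. The boundary cases you flag ($k=j+1$ giving trivial multipliers, and $g_{n,n}=g_n$) are handled the same way by the paper's empty-product conventions.
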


\begin{proof}
Note that for that for $j+1 \leq k \leq i$,
\begin{equation*}
  Y_{[j,k-2]}Z_{[j+1,k-1]}g_{k,n} = X_{\{k\}} Y_{[j,n]\setminus\{k\}} Z_{[j+1,n]} - X_{\{k-1\}} Y_{[j,n]\setminus\{k-1\}} Z_{[j+1,n]},
\end{equation*}
which gives us,
\begin{equation*}
  Y_{[j+1,n]\setminus\{i\}} Z_{[j+1,n]}\Delta_{i,j} = \sum_{k=j+1}^{i} Y_{[j,k-2]}Z_{[j+1,k-1]}g_{k,n}.
\end{equation*}
Moreover, for $j+1 \leq k \leq i$, we have that,
\begin{eqnarray*}
  \initial(Y_{[j,k-2]}Z_{[j+1,k-1]}g_{k,n}) &=& X_{\{k\}} Y_{[j,n]\setminus\{k\}} Z_{[j+1,n]} \\
  &\leq& X_{\{i\}} Y_{[j,n]\setminus\{i\}} Z_{[j+1,n]}\\
  &=& \initial(Y_{[j+1,n]\setminus\{i\}}Z_{[j+1,n]}\Delta_{i,j}).
\end{eqnarray*}
Thus, we are done.\end{proof}

The following lemma further simplifies our computation of a Gr\"obner basis of $\sum_{i=1}^n J_i$.

\begin{lem}
\label{general lemma 3}
  Let $f$ and $g$ be two binomials which are not monomials, such that $f \neq g$. Let $c = \gcd(\initial(f),\initial(g))$ and suppose $c$ divides $\initial(f)-f$ and $\initial(g)-g$. Then $S(f,g)$ reduces to 0 with respect to $f,g$.
\end{lem}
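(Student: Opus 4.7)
The plan is to show that the divisibility hypothesis forces the $S$-polynomial to collapse to a difference of just two terms, after which the reduction to $0$ is essentially forced.

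First I will set up notation. Write $m = \initial(f)$ and $n = \initial(g)$; since $f$ and $g$ are binomials I can write $f = m - u$ and $g = n - v$ for single nonzero terms $u,v$. The hypothesis lets me factor $u = c\tilde u$ and $v = c\tilde v$ for terms $\tilde u, \tilde v$, and I also write $m = cm'$, $n = cn'$ with $\gcd(m',n')=1$.

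Next I will compute $S(f,g)$ directly. Expanding $S(f,g) = n'f - m'g$ and using the identities $n'm = m'n = cm'n'$, $n'u = n\tilde u$, and $m'v = m\tilde v$ (each of which uses the factorization through $c$), everything collapses to
\begin{equation*}
S(f,g) \;=\; m\tilde v - n\tilde u.
\end{equation*}
This is the main step, and the only place the divisibility hypothesis really enters: without $c$ dividing $u$ and $v$, the ``middle'' terms $n'u$ and $m'v$ would not rewrite as monomial multiples of $n$ and $m$ respectively, and one would be left with polynomial, rather than monomial, expressions.

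Finally I will verify that this two-term expression reduces to $0$ with respect to $\{f,g\}$. If $m\tilde v = n\tilde u$ the $S$-polynomial is already zero. Otherwise, after possibly swapping the roles of $f$ and $g$, I may assume $m\tilde v > n\tilde u$, so that $\initial(S(f,g)) = m\tilde v$. Subtracting $\tilde v f$ kills the leading term, and the identity $\tilde v u = c\tilde u\tilde v = \tilde u v$ shows that what remains is precisely $-\tilde u g$. Hence $S(f,g) = \tilde v f - \tilde u g$, a valid division-algorithm expression with remainder $0$ (the check $\initial(\tilde u g) = n\tilde u < m\tilde v = \initial(S(f,g))$ is immediate). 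I do not anticipate a genuine obstacle; the only care required is tracking signs and the cancellation of monomials through the factor $c$, which is exactly what the divisibility hypothesis was designed to facilitate.
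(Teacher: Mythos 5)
Your proof is correct and follows essentially the same route as the paper's: both expand the $S$-polynomial, use the divisibility of the trailing terms by $c$ to collapse it to $m\tilde v - n\tilde u = \tilde v f - \tilde u g$ (the paper's $\frac{f_2}{c}g - \frac{g_2}{c}f$), and then check the leading-term inequalities to see this is a valid division expression with zero remainder.
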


\begin{proof}
  Without loss of generality we can assume $f = f_1-f_2$ where $f_1$ and $f_2$ are terms where $\initial(f) = f_1$. Similarly we can assume $g = g_1-g_2$ where $g_1$ and $g_2$ are terms where $\initial(g) = g_1$. Thus, we have the following:

  \begin{eqnarray*}
    \frac{\initial(f)}{c}g-\frac{\initial(g)}{c}f &=&
   \frac{f_1}{c}(g_1-g_2) - \frac{g_1}{c}(f_1-f_2) \\
    &=&\frac{f_2}{c}g_1 - \frac{g_2}{c}f_1 \\
    &=& \frac{f_2}{c}g - \frac{g_2}{c}f.
  \end{eqnarray*}

  Note that if $\frac{\initial(f)}{c}g-\frac{\initial(g)}{c}f  = 0$, then we are done. Otherwise we have that $\frac{f_2}{c}g_1 \neq \frac{g_2}{c}f_1$. Note that $\initial(\frac{f_2}{c}g) = \frac{f_2}{c}\initial(g) = \frac{f_2}{c}g_1 \leq \initial(\frac{f_2}{c}g_1 - \frac{g_2}{c}f_1)$. Similarly, $\initial(\frac{g_2}{c}f) = \frac{g_2}{c}\initial{f} = \frac{g_2}{c}f_1 \leq \initial(\frac{f_2}{c}g_1 - \frac{g_2}{c}f_1)$. So, $S(f,g)$ reduces to zero with respect to $f$ and $g$. \end{proof}

\begin{cor}
\label{lemma 3}
  Let $\{g_\alpha, g_\beta\} \subseteq G$, for $G$ from Notation \ref{set G}. If $g = \gcd(\initial(g_\alpha),\initial(g_\beta))$ divides $\initial(g_\alpha) - g_\alpha$ and $\initial(g_\beta) - g_\beta$, then $S(g_\alpha,g_\beta)$ reduces to 0 with respect to $g_\alpha, g_\beta$.
\end{cor}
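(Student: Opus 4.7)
The plan is to observe that Corollary \ref{lemma 3} is essentially immediate from Lemma \ref{general lemma 3}, once we verify one structural fact about the elements of $G$. First I would check that every element $g \in G$ is a binomial which is not a monomial. Inspecting Notation \ref{set G} together with Notation \ref{gens of a} and Notation \ref{more gs}, each generator has the form $\mu \cdot \Delta_{a,b}$, where $\mu$ is a squarefree monomial in the $x_i, y_i, z_i$ variables and $a \neq b$. Since $\Delta_{a,b} = x_a y_b - x_b y_a$ is a binomial whose two terms are distinct monomials (both of bidegree $(1,1)$ in the $x,y$ variables), multiplying by the monomial $\mu$ scales each term without producing any cancellation, so $\mu\Delta_{a,b}$ remains a binomial with exactly two nonzero terms, and in particular is not a monomial.

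With that structural observation in hand, the hypotheses of Lemma \ref{general lemma 3} are precisely those assumed in the corollary: $g_\alpha$ and $g_\beta$ are binomials (each not a monomial), and $g = \gcd(\initial(g_\alpha),\initial(g_\beta))$ divides both $\initial(g_\alpha) - g_\alpha$ and $\initial(g_\beta) - g_\beta$. In the trivial case $g_\alpha = g_\beta$ the $S$-polynomial is zero and there is nothing to prove; otherwise we may invoke Lemma \ref{general lemma 3} verbatim to conclude that $S(g_\alpha, g_\beta)$ reduces to $0$ with respect to $g_\alpha, g_\beta$. There is no real obstacle here: the content of the corollary is already encoded in Lemma \ref{general lemma 3}, and the corollary merely repackages that lemma in the form needed for the upcoming Buchberger-criterion computations, where applying it pair by pair to elements of $G$ will substantially reduce the number of $S$-polynomials that must be computed by hand.
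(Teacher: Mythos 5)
Your proposal is correct and takes the same route as the paper, which simply cites Lemma \ref{general lemma 3}; your extra verification that each element of $G$ is a genuine binomial (a squarefree monomial times some $\Delta_{a,b}$, hence two distinct nonzero terms) and your handling of the trivial case $g_\alpha = g_\beta$ are sensible bits of diligence the paper leaves implicit.
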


\begin{proof}
Follows directly from \ref{general lemma 3}.\end{proof}

For organizational purposes, we first compute a Gr\"obner basis for $\mathfrak{a}$.

\begin{lem}
  \label{Grobner basis of a}
  The set $G$ from Notation \ref{set G} is a Gr\"obner basis of $\mathfrak{a}$.
\end{lem}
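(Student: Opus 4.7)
The plan is to invoke Buchberger's criterion, which requires (i) that $G \subseteq \mathfrak{a}$, so that $(G) = \mathfrak{a}$ (since $g_1, \dots, g_n \in G$), and (ii) that $S(g_\alpha, g_\beta)$ reduces to zero modulo $G$ for every pair $g_\alpha, g_\beta \in G$. For (i) the only work is to show $g_{1,j}, g_{j,n} \in \mathfrak{a}$ for $2 \leq j \leq n-1$. The Pl\"ucker identity
\begin{equation*}
  x_{j}\Delta_{j+1,j-1} = x_{j+1}\Delta_{j,j-1} + x_{j-1}\Delta_{j+1,j},
\end{equation*}
multiplied by $X_{[1,j-2]}Z_{[1,j]}$, exhibits $g_{1,j}$ as an $R$-linear combination of $g_{1,j-1}$ and $g_j$; starting from $g_{1,1} = g_1$ and inducting on $j$ gives $g_{1,j} \in \mathfrak{a}$, and the $y$-analog handles $g_{j,n}$.

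For (ii), each $g_\alpha \in G$ has the form $A_\alpha\Delta_{p_\alpha,q_\alpha}$ with $p_\alpha > q_\alpha$ and $A_\alpha$ a squarefree monomial disjoint from $\{x_{p_\alpha}, x_{q_\alpha}, y_{p_\alpha}, y_{q_\alpha}\}$; under reverse lex one checks that $\initial(g_\alpha) = A_\alpha x_{p_\alpha}y_{q_\alpha}$ and $g_\alpha - \initial(g_\alpha) = -A_\alpha x_{q_\alpha}y_{p_\alpha}$. For a pair $(g_\alpha, g_\beta)$, the gcd $c = \gcd(\initial g_\alpha, \initial g_\beta)$ divides $g_\alpha - \initial(g_\alpha)$ precisely when $c$ contains neither $x_{p_\alpha}$ nor $y_{q_\alpha}$ (and symmetrically for $\beta$); equivalently, neither $p_\alpha = p_\beta$, nor $q_\alpha = q_\beta$, nor does $x_{p_\alpha}$ or $y_{q_\alpha}$ appear in $A_\beta$ (and symmetrically). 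Whenever no such collision occurs, Corollary \ref{lemma 3} yields the reduction.

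The remaining ``collision'' pairs, namely $(g_1, g_2)$, $(g_{n-1}, g_n)$, pairs $(g_{1,j}, g_{1,j'})$ with $j' \geq j+2$, pairs $(g_{j,n}, g_{j',n})$ with $j' \geq j+2$, certain mixed pairs $(g_{1,j}, g_{j',n})$ with $j \neq j'$, and crossings $(g_i, g_{1,j})$ or $(g_i, g_{j,n})$, must be handled directly. The strategy in each case is to compute $S(g_\alpha, g_\beta)$, cancel the leading terms, and then use Pl\"ucker-type identities to rewrite the remainder as a sum of terms of the form $X_K Y_L Z_T \Delta_{r,s}$ whose index ranges satisfy the hypotheses of Lemma \ref{lemma 1} or Lemma \ref{lemma 2}. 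Applying those lemmas yields a presentation $S(g_\alpha, g_\beta) = \sum f_\gamma g_\gamma$ with every $\initial(f_\gamma g_\gamma) \leq \initial(S(g_\alpha,g_\beta))$, which is exactly the required reduction to zero. The main obstacle is the case-by-case bookkeeping: one must verify for each collision type that the indices produced by the Pl\"ucker manipulation fall within the valid ranges of Lemma \ref{lemma 1} or Lemma \ref{lemma 2} and that the initial-term bound is preserved throughout the rewriting.
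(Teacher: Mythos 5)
Your overall architecture is exactly the paper's: the same Pl\"ucker recursion shows $g_{1,j},g_{j,n}\in\mathfrak{a}$ (the paper uses the identity in the form $X_{[1,i-1]\cup\{i+1\}}Z_{[1,i]}g_{i+1}-z_{i+1}x_{i+2}g_{1,i}=g_{1,i+1}$ and its $y$-analog), the non-colliding pairs are dispatched by Corollary \ref{lemma 3}, and the colliding pairs are meant to be rewritten via Pl\"ucker manipulations into terms handled by Lemma \ref{lemma 1} and Lemma \ref{lemma 2}. Your identification of when the gcd fails to divide the trailing terms (presence of $x_{p_\alpha}$ or $y_{q_\alpha}$ in the gcd) is also the paper's Case 4 dichotomy.

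The gap is that the proposal stops precisely where the actual content of this lemma begins: you never carry out a single one of the collision reductions. The paper's proof is, in bulk, exactly that bookkeeping --- Cases 2, 3, 4.a (three shapes of $g_\alpha$) and 4.b (three shapes of $g_\alpha$), each with an explicit two- or four-term Pl\"ucker decomposition of $S(g_\alpha,g_\beta)$, an identification of which summands are monomial multiples of elements of $G$ and which fall under Lemma \ref{lemma 1} or \ref{lemma 2}, and a term-by-term verification that every $\initial(f_\gamma g_\gamma)\leq\initial(S(g_\alpha,g_\beta))$ (including edge cases such as $j=k-1$ or $i=l+1$ where a summand vanishes or its initial term changes). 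Saying ``one must verify for each collision type that the indices fall within the valid ranges and that the initial-term bound is preserved'' names the obligation without discharging it; since nothing else in the lemma is nontrivial, this is not a complete proof. One small additional point: for the pairs where the two minors share an index (your $(g_1,g_2)$, $(g_{n-1},g_n)$, and the $(g_i,g_{1,j})$-type crossings of the paper's Cases 2 and 3), the S-polynomial collapses directly to a monomial multiple of a single element of $G$ (e.g.\ $f=-g_{i+1,n}$ or $f=-y_{i+2}x_iz_{i+1}g_{1,i}$), so no appeal to Lemma \ref{lemma 1} or \ref{lemma 2} is needed there; only the genuinely disjoint-index collisions of Case 4 require the full decomposition.
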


\begin{proof}
  First note that $\mathfrak{a} = (g_1,\dots,g_n)$, thus $\mathfrak{a}$ is contained in the ideal generated by $G$. Also note that, for $1 \leq i \leq n-1$, $X_{[1,i-1]\cup\{i+1\}}Z_{[1,i]}g_{i+1}-z_{i+1}x_{i+2}g_{1,i} =  g_{1,i+1}$ and, for $2 \leq j \leq n$, $Y_{\{j-1\}\cup[j+1,n]}Z_{[j,n]}g_{j-1}-z_{j-1}x_{j}g_{n,j} = g_{n,j-1}$. As $g_{1,1} = g_1$ and $g_{n,n} = g_n$, it follows that the ideal generated by $G$ is contained in $\mathfrak{a}$. Thus $G$ is a generating set of $\mathfrak{a}$.

  Let $\{g_\alpha, g_\beta\} \subseteq G$. First note that $g_\alpha$ has the form $c\Delta_{j,i}$ where $j = i+1$ or $j= i+2$ and $c$ is a monomial. Similarly, $g_\beta$ has the form $d\Delta_{l,k}$ where $l = k+1$ or $l=k+2$ and $d$ is a monomial. Furthermore, note that $g_\alpha$ and $g_\beta$ are the sum of square free monomials.

  Let  $g = \gcd(\initial(g_\alpha),\initial(g_\beta))$. Let \begin{equation*}
    f = S(g_\beta, g_\alpha) =  \frac{\initial(g_\alpha)}{g}g_\beta - \frac{\initial(g_\beta)}{g}g_\alpha = \frac{cd}{g}(X_{\{i,l\}}Y_{\{j,k\}}-X_{\{j,k\}}Y_{\{i,l\}}).
    \end{equation*}

  \emph{Case 1}: $j = l$ and $i = k$. Then, $f = 0$.

  \emph{Case 2}: Suppose that $j=l$ and $i \neq k$. Without loss of generality we may assume that $i = j-2$ and $k = j-1$. Then we have
  \begin{equation*}
    g_\alpha = z_{i+1}\Delta_{i+2,i}
  \end{equation*} and
  \begin{equation*}
    g_\beta = d\Delta_{i+2,i+1}.
  \end{equation*} So
  \begin{eqnarray*}
    g &=& \gcd(z_{i+1}x_{i+2}y_i,dx_{i+2}y_{i+1})\\
        &=& \gcd(z_{i+1}x_{i+2}y_i,d)\gcd(z_{i+1}x_{i+2}y_i,x_{i+2})\gcd(z_{i+1}x_{i+2}y_i,y_{i+1})\\
        &=& \gcd(z_{i+1}x_{i+2}y_i,d)x_{i+2} \\
        &=& \gcd(z_{i+1},d)\gcd(x_{i+2},d)\gcd(y_{i},d)x_{i+2} \\
        &=& \gcd(z_{i+1},d)\gcd(y_i,d)x_{i+2}.
  \end{eqnarray*}

  The first and final equalities follow from the fact that $\initial(g_\beta)$ is square free and thus $x_{i+2}$ and $y_{i+1}$ do not divide $d$.

  Note that $d = Y_{[i+3,n]}Z_{[i+2,n]}$ or $d = X_{[1,i]}Z_{[1,i+1]}$, thus $\gcd(y_i,d)=1$. So we have $g = \gcd(z_{i+1},d)x_{i+2}$. Thus, $f = \lcm(z_{i+1},d)(y_{i+1}x_{i}y_{i+2} - x_{i+1}y_{i}y_{i+2})= \lcm(z_{i+1},d)y_{i+2}\Delta_{i,i+1}$. Note that $\lcm(z_{i+1},d) = Y_{[i+3,n]}Z_{[i+1,n]}$ or $\lcm(z_{i+1},d) = X_{[1,i]}Z_{[1,i+1]}$, so $f = -g_{i+1,n}$ or $f = -y_{i+2}x_iz_{i+1}g_{1,i}$.

  \emph{Case 3}: Suppose $j \neq l$ but $i = k$. Without loss of generality assume that $j>l$, then
  \begin{equation*}
    g_\alpha = z_{i+1}\Delta_{i+2,i}
  \end{equation*} and \begin{equation*}
    g_\beta = d\Delta_{i+1,i}.
\end{equation*}
  \begin{eqnarray*}
    g &=& \gcd(z_{i+1}x_{i+2}y_i,dx_{i+1}y_{i})\\
        &=& \gcd(z_{i+1}x_{i+2}y_i,d)\gcd(z_{i+1}x_{i+2}y_i,x_{i+1})\gcd(z_{i+1}x_{i+2}y_i,y_{i})\\
        &=& \gcd(z_{i+1}x_{i+2}y_i,d)y_{i}\\
        &=& \gcd(z_{i+1},d)\gcd(x_{i+2},d)\gcd(y_i,d)y_i\\
        &=& \gcd(z_{i+1},d)\gcd(x_{i+2},d)y_i.
  \end{eqnarray*}

  The first and last equalities follow from the fact that $\initial(g_\beta)$ is square free and thus $y_i$ and $x_{i+1}$ do not divide $d$.

  Note that $d = Y_{[i+2,n]} Z_{[i+1,n]}$ or $d = X_{[1,i-1]} Z_{[1,i]} $, thus $\gcd(x_{i+2},d)=1$. So we have $g = \gcd(z_{i+1},d)y_{i}$. Thus, $f = \lcm(z_{i+1},d)(x_{i+1}x_iy_{i+2}-x_iy_{i+1}x_{i+2}) = \lcm(z_{i+1},d)x_i\Delta_{i+1,i+2}$. Note that $\lcm(z_{i+1},d) = Y_{[i+2,n]} Z_{[i+1,n]}$ or $\lcm(z_{i+1},d) = X_{[1,i-1]}Z_{[1,i+1]}$, so $f = -g_{1,i+1}$ or $f = -x_iy_{i+2}z_{i+1}g_{i+2,n}$.

  \emph{Case 4}: Suppose $j \neq l$ and $i \neq k$.

  \begin{eqnarray*}
    g &=& \gcd(cx_{j}y_i,dx_{l}y_{k})\\
    &=& \gcd(cx_{j}y_i,d)\gcd(cx_{j}y_i,x_{l}y_{k})\\
    &=& \gcd(c,d)\gcd(x_{j}y_i,d)\gcd(c,x_{l}y_{k})\gcd(x_{j}y_i,x_{l}y_{k})\\
    &=& \gcd(c,d)\gcd(x_{j}y_i,d)\gcd(c,x_{l}y_{k}).
  \end{eqnarray*}

  Note, that if $g$ divides $\initial(g_\alpha) - g_\alpha$ and $\initial(g_\beta) - g_\beta$, then we are done by Corollary \ref{lemma 3}. So, without loss of generality, assume that $g$ does not divide $\initial(g_\alpha) - g_\alpha$. Note $\initial(g_\alpha) = cx_jy_i$ and $\initial(g_\alpha) - g_\alpha = cx_iy_j$. Since $g$ divides $cx_jy_i$ and $cx_jy_i$ is a square free monomial, then $x_j$ divides $g$ or $y_i$ divides $g$ (as if neither do, then $g$ must divide $c$ and thus divide $cx_iy_j$).

  \emph{Case 4.a}:

  First assume that $x_j$ divides $g$. Note that $x_j$ dividing $g$ implies that $x_j$ divides $\initial(g_\beta)$. Since $x_j$ divides $\initial(g_\beta)$ and does not divide $x_ly_k$, $x_j$ must divide $d$. Thus, $d = X_{[1,k-1]}Z_{[1,k]}$
  and $j \leq k-1$. Also note that this implies $l = k+1$.

  We have three options for $g_\alpha$

\begin{enumerate}[label=$($\alph*$)$]
  \item $g_\alpha = z_{i+1}\Delta_{i+2,i}$ where $i \leq k-3$
  \item $g_\alpha = X_{[1,i-1]}Z_{[1,i]}\Delta_{i+1,i}$ where $i \leq k-2$
  \item $g_\alpha = Y_{[i+2,n]}Z_{[i+1,n]}\Delta_{i+1,i}$ where $i \leq k-2$.
\end{enumerate}

\emph{In cases a and b}, $g = \gcd(c,d)x_j$, thus $f = \frac{\lcm(c,d)}{x_j}(X_{\{i,k+1\}}Y_{\{j,k\}}-X_{\{j,k\}}Y_{\{i,k+1\}})$. In this case $\lcm(c,d)= X_{[1,k-1]}Z_{[1,k]}$. So we have,
  \begin{eqnarray*}
    f &=& X_{[1,k-1]\setminus \{j\}}X_{\{i,k+1\}} Y_{\{j,k\}} Z_{[1,k]} - X_{[1,k]}Y_{\{i,k+1\}}Z_{[1,k]} \\
    &=& X_{[1,k-2]\setminus\{j\}}X_{\{i,k+1\}}Y_{\{k\}}Z_{[1,k]}\Delta_{k-1,j}+X_{[1,k-2]}X_{\{i\}}Y_{\{k\}}Z_{[1,k]}\Delta_{k+1,k-1}\\
    &&-X_{[1,k-1]}Y_{\{k+1\}}Z_{[1,k]}\Delta_{k,i}\\
    &=&X_{\{i,k+1\}}Y_{\{k\}}Z_{\{k-1,k\}}(X_{[1,k-2]\setminus\{j\}}Z_{[1,k-2]}\Delta_{k-1,j}) + X_{[1,k-2]}X_{\{i\}}Y_{\{k\}}Z_{[1,k-1]}g_k\\
    && -X_{\{i\}}Y_{\{k+1\}}Z_{\{k\}}(X_{[1,k-1]\setminus\{i\}}Z_{[1,k-1]}\Delta_{k,i})\\
    & =& X_{[1,k-2]}X_{\{i\}}Y_{\{k\}}Z_{[1,k-1]} g_k + \sum_{g_\gamma \in G}f_\gamma g_\gamma.
  \end{eqnarray*}

The final equality follows from Lemma \ref{lemma 1}. Note that,
\begin{equation*}
  \initial(X_{[1,k-1]}Y_{\{k+1\}}Z_{[1,k]}\Delta_{k,i}) = X_{[1,k]}Y_{\{i,k+1\}}Z_{[1,k]} \leq \initial(f),
\end{equation*} and, as $j \leq k-1$,
\begin{eqnarray*}
  \initial(X_{[1,k-2]}X_{\{i\}}Y_{\{k\}}Z_{[1,k]}\Delta_{k+1,k-1}) &=&X_{[1,k-1]\setminus\{k-1\}}X_{\{i,k+1\}}Y_{\{k-1,k\}}Z_{[1,k]}\\
  &\leq& X_{[1,k-1]\setminus \{j\}}X_{\{i,k+1\}} Y_{\{j,k\}}Z_{[1,k]}\\
  &\leq& \initial(f).
\end{eqnarray*}
Also note that, if $j \neq k-1$,
\begin{equation*}
  \initial(X_{[1,k-2]\setminus\{j\}}X_{\{i,k+1\}}Y_{\{k\}}Z_{[1,k]}\Delta_{k-1,j}) = X_{[1,k-1]\setminus \{j\}}X_{\{i,k+1\}} Y_{\{j,k\}} \leq \initial(f).
\end{equation*}
Furthermore, note that by Lemma \ref{lemma 1}, for all $\gamma$, \begin{equation*}
  \initial(g_\gamma f_\gamma) \leq \max\{\initial(X_{[1,k-2]\setminus\{j\}}X_{\{i,k+1\}}Y_{\{k\}}Z_{[1,k]}\Delta_{k-1,j}), \initial(X_{[1,k-1]}Y_{\{k+1\}}Z_{[1,k]}\Delta_{k,i})\}.
\end{equation*}

\emph{In case c}, $g= gcd(c,d)x_{i+1}y_k$.  $f = \frac{\lcm(c,d)}{x_{i+1}y_k}(X_{\{i,k+1\}}Y_{\{i+1,k\}}-X_{\{i+1,k\}}Y_{\{i,k+1\}})$. Note in this case $\lcm(c,d)= X_{[1,k-1]}Y_{[i+2,n]}Z_{[1,n]}$. So we have,
\begin{eqnarray*}
  f &=& X_{[1,k-1]\setminus\{i+1\}}X_{\{i,k+1\}} Y_{[i+1,n]}Z_{[1,n]} - X_{[1,k]} Y_{[i+2, n]\setminus\{k\}}Y_{\{i,k+1\}}Z_{[1,n]} \\
  &=& X_{[1,k-2]\setminus\{i+1\}}X_{\{i,k+1\}}Y_{[i+2,n]}Z_{[1,n]}\Delta_{k-1,i+1} + X_{[1,k-2]}X_{\{i\}}Y_{[i+2,n]}Z_{[1,n]}\Delta_{k+1,k-1}\\
  & & -X_{[1,k-1]}Y_{[i+3,n]\setminus\{k\}}Y_{\{i,k+1\}}Z_{[1,n]}\Delta_{k,i+2} - X_{[1,k-1]}Y_{[i+3,n]}Y_{\{k+1\}}Z_{[1,n]}\Delta_{i+2,i} \\
  &=& X_{\{i,k+1\}}Y_{[i+2,n]}Z_{[k-1,n]}(X_{[1,k-2]\setminus\{i+1\}}Z_{[1,k-2]}\Delta_{k-1,i+1}) + X_{[1,k-2]}X_{\{i\}} Y_{[i+2,n]}Z_{[1,n]\setminus \{k\}} g_k \\
  & & - X_{[1,k-1]}Y_{\{i,k+1\}}Z_{[1,i+2]}(Y_{[i+3,n]\setminus\{k\}}Z_{[i+3,n]}\Delta_{k,i+2}) - X_{[1,k-1]} Y_{[i+3,n]}Y_{\{k+1\}} Z_{[1,n]\setminus\{i+1\}}g_{i+1}\\
  &=& X_{[1,k-2]}X_{\{i\}} Y_{[i+2,n]}Z_{[1,n]\setminus \{k\}} g_k - X_{[1,k-1]} Y_{[i+3,n]}Y_{\{k+1\}} Z_{[1,n]\setminus\{i+1\}}g_{i+1} \\
  &&+\:\: \sum_{g_\gamma \in G}f_\gamma g_\gamma.
\end{eqnarray*}

The final equality follows from Lemma \ref{lemma 1} and Lemma \ref{lemma 2}. Note that, as $i < k-2$,
\begin{eqnarray*}
  \initial(X_{[1,k-2]}X_{\{i\}} Y_{[i+2,n]}Z_{[1,n]\setminus \{k\}} g_k) &=&  X_{[1,k-1]\setminus\{k-1\}}X_{\{i,k+1\}}Y_{\{k-1\}}Y_{[i+2,n]}Z_{[1,n]}\\
  &\leq& X_{[1,k-1]\setminus\{i+1\}}X_{\{i,k+1\}} Y_{[i+1,n]}Z_{[1,n]}\\
  &\leq& \initial(f),
\end{eqnarray*} and
\begin{eqnarray*}
  \initial(X_{[1,k-1]} Y_{[i+3,n]}Y_{\{k+1\}} Z_{[1,n]\setminus\{i+1\}}g_{i+1}) &=& X_{[1,k-1]}X_{\{i+2\}}Y_{[i+3,n]}Y_{\{i,k+1\}}Z_{[1,n]} \\
  &\leq&   X_{[1,k]} Y_{[i+2, n]\setminus\{k\}}Y_{\{i,k+1\}}Z_{[1,n]} \\
  &\leq& \initial(f).
\end{eqnarray*}
Moreover, if $i+2\neq k$,
\begin{equation*}
  \initial(X_{[1,k-2]\setminus\{i+1\}}X_{\{i,k+1\}}Y_{[i+2,n]}Z_{[1,n]}\Delta_{k-1,i+1}) = X_{[1,k-1]\setminus\{i+1\}}X_{\{i,k+1\}} Y_{[i+1,n]}Z_{[1,n]} \leq \initial(f)
\end{equation*}
and
\begin{equation*}
  \initial(X_{[1,k-1]}Y_{[i+3,n]\setminus\{k\}}Y_{\{i,k+1\}}Z_{[1,n]}\Delta_{k,i+2}) = X_{[1,k]} Y_{[i+2, n]\setminus\{k\}}Y_{\{i,k+1\}}Z_{[1,n]} \leq \initial(f).
\end{equation*}
Furthermore, note that by Lemma \ref{lemma 1} and Lemma \ref{lemma 2}, for all $\gamma$,
\begin{multline*}
    \initial(f_\gamma g_\gamma)  \leq \max\{\initial(X_{[1,k-2]\setminus\{i+1\}}X_{\{i,k+1\}}Y_{[i+2,n]}Z_{[1,n]}\Delta_{k-1,i+1}), \\ \initial(X_{[1,k-1]}Y_{[i+3,n]\setminus\{k\}}Y_{\{i,k+1\}}Z_{[1,n]}\Delta_{k,i+2})\}.
\end{multline*}

\emph{Case 4.b}:

  Now assume that $y_i$ divides $g$. Note that $y_i$ dividing $g$ implies that $y_i$ divides $\initial(g_\beta)$. Since $y_i$ divides $\initial(g_\beta)$ and as $y_i$ does not divide $x_ly_k$, $y_i$ must divide $d$. Thus, $d = Y_{[l+1,n]}Z_{[l,n]}$
  and $i \geq l+1$. Also note that this implies $k = l-1$ and that $j>l+1$.

  We have three options for $g_\alpha$

  \begin{enumerate}[label=$($\alph*$)$]
    \item $g_\alpha = z_{i+1}\Delta_{i+2,i}$
    \item $g_\alpha = X_{[1,i-1]}Z_{[1,i]}\Delta_{i+1,i}$
    \item $g_\alpha = Y_{[i+2,n]}Z_{[i+1,n]}\Delta_{i+1,i}$.
  \end{enumerate}

  \emph{In cases a and c}, $g = \gcd(c,d)y_i$, thus $f = \frac{\lcm(c,d)}{y_i}(X_{\{l,i\}}Y_{\{l-1,j\}}-X_{\{l-1,j\}}Y_{\{l,i\}})$. Note in this case $\lcm(c,d)= Y_{[l+1,n]}Z_{[l,n]}$. So we have,
  \begin{eqnarray*}
    f &=& X_{\{l,i\}}Y_{[l+1,n]\setminus\{i\}}Y_{\{l-1,j\}}Z_{[l,n]} - X_{\{l-1,j\}}Y_{[l,n]}Z_{[l,n]} \\
    &=&  X_{\{l\}}Y_{[l+2,n]\setminus \{i\}}Y_{\{l-1,j\}}Z_{[l,n]}\Delta_{i,l+1} + X_{\{l\}}Y_{[l+2,n]}Y_{\{j\}}Z_{[l,n]}\Delta_{l+1,l-1} - X_{\{l-1\}}Y_{[l+1,n]}Z_{[l,n]}\Delta_{j,l}\\
    &=&  X_{\{l\}}Y_{\{l-1,j\}}Z_{\{l,l+1\}}(Y_{[l+2,n]\setminus \{i\}}Z_{[l+2,n]}\Delta_{i,l+1}) + X_{\{l\}}Y_{[l+2,n]}Y_{\{j\}}Z_{[l+1,n]}g_l \\
    & & - X_{\{l-1\}}Y_{\{j\}}Z_{\{l\}}(Y_{[l+1,n]\setminus\{j\}}Z_{[l+1,n]}\Delta_{j,l})\\
    &=&  X_{\{l\}}Y_{[l+2,n]}Y_{\{j\}}Z_{[l+1,n]}g_l  + \sum_{g_\gamma \in G}f_\gamma g_\gamma.
  \end{eqnarray*}

  The final equality follows from Lemma \ref{lemma 2}. Note that,
  \begin{equation*}
    \initial( X_{\{l-1\}}Y_{[l+1,n]}Z_{[l,n]}\Delta_{j,l}) = X_{\{l-1,j\}}Y_{[l,n]}Z_{[l,n]} \leq \initial(f),
  \end{equation*}
  and, as $l+1\leq i$,
  \begin{eqnarray*}
    \initial(X_{\{l\}}Y_{[l+2,n]}Y_{\{j\}}Z_{[l+1,n]}g_l) & = & X_{\{l,l+1\}}Y_{[l+1,n]\setminus\{l+1\}}Y_{\{l-1,j\}}Z_{[l,n]} \\
    &\leq&X_{\{l,i\}}Y_{[l+1,n]\setminus\{i\}}Y_{\{l-1,j\}}Z_{[l,n]}\\
    &\leq& \initial(f).
  \end{eqnarray*}
  Also note that, if $i \neq l+1$,
  \begin{equation*}
    \initial( X_{\{l\}}Y_{[l+2,n]\setminus \{i\}}Y_{\{l-1,j\}}Z_{[l,n]}\Delta_{i,l+1}) = X_{\{l,i\}}Y_{[l+1,n]\setminus\{i\}}Y_{\{l-1,j\}}Z_{[l,n]} \leq \initial(f).
  \end{equation*}

  Furthermore, note that by Lemma \ref{lemma 2}, for all $\gamma$,
  \begin{equation*}
    \initial(f_\gamma g_\gamma) \leq \max\{\initial( X_{\{l-1\}}Y_{[l+1,n]}Z_{[l,n]}\Delta_{j,l}), \initial( X_{\{l\}}Y_{[l+2,n]\setminus \{i\}}Y_{\{l-1,j\}}Z_{[l,n]}\Delta_{i,l+1})\}.
  \end{equation*}

  \emph{In case b}, $g= gcd(c,d)x_{l}y_i$.  $f = \frac{\lcm(c,d)}{x_{l}y_i}(X_{\{l,i\}}Y_{\{l-1,i+1\}}-X_{\{l-1,i+1\}}Y_{\{l,i\}})$. Note in this case $\lcm(c,d)= X_{[1,i-1]}Y_{[i+1,n]}Z_{[1,n]}$. So we have,
  \begin{eqnarray*}
    f &=& X_{[1,i]}Y_{[l+1,n]\setminus\{i\}}Y_{\{l-1,i+1\}}Z_{[1,n]} -  X_{[1,i-1]\setminus \{l\}}X_{\{l-1,i+1\}}Y_{[l,n]}Z_{[1,n]}\\
    &=& X_{[1,i-1]}Y_{[l+2,n]\setminus\{i\}}Y_{\{l-1,i+1\}}Z_{[1,n]}\Delta_{i,l+1} +  X_{[1,i-1]}Y_{[l+2,n]}Y_{\{i+1\}}Z_{[1,n]}\Delta_{l+1,l-1}\\
    & & - X_{[1,i-2]\setminus \{l\}}X_{\{l-1,i+1\}}Y_{[l+1,n]}Z_{[1,n]}\Delta_{i-1,l} - X_{[1,i-2]}X_{\{l-1\}}Y_{[l+1,n]}Z_{[1,n]}\Delta_{i+1,i-1}\\
    &=& X_{[1,i-1]}Y_{\{l-1,i+1\}}Z_{[1,l+1]}(Y_{[l+2,n]\setminus\{i\}}Z_{[l+2,n]}\Delta_{i,l+1}) +  X_{[1,i-1]}Y_{[l+2,n]}Y_{\{i+1\}}Z_{[1,n]\setminus\{l\}}g_l\\
    & & - X_{\{l-1,i+1\}}Y_{[l+1,n]}Z_{[i-1,n]}(X_{[1,i-2]\setminus \{l\}}Z_{[1,i-2]}\Delta_{i-1,l}) - X_{[1,i-2]}X_{\{l-1\}}Y_{[l+1,n]}Z_{[1,n]\setminus\{i\}}g_i\\
    &=&  X_{[1,i-1]}Y_{[l+2,n]}Y_{\{i+1\}}Z_{[1,n]\setminus\{l\}}g_l -  X_{[1,i-2]}X_{\{l-1\}}Y_{[l+1,n]}Z_{[1,n]\setminus\{i\}}g_i \\
    &&+\:\: \sum_{g_\gamma \in G}f_\gamma g_\gamma
  \end{eqnarray*}
The final equality follows from Lemma \ref{lemma 1} and Lemma \ref{lemma 2}. Note that, as $l+1 \leq i$,
\begin{eqnarray*}
  \initial(X_{[1,i-1]}Y_{[l+2,n]}Y_{\{i+1\}}Z_{[1,n]\setminus\{l\}}g_l) &=& X_{[1,i-1]}X_{\{l+1\}}Y_{[l+1,n]\setminus\{l+1\}}Y_{\{l-1,i+1\}}Z_{[1,n]}\\
  &\leq& X_{[1,i]}Y_{[l+1,n]\setminus\{i\}}Y_{\{l-1,i+1\}}Z_{[1,n]} \\
  &\leq& \initial(f),
\end{eqnarray*} and
\begin{eqnarray*}
  \initial(X_{[1,i-2]}X_{\{l-1\}}Y_{[l+1,n]}Z_{[1,n]\setminus\{i\}}g_i) &=& X_{[1,i-2]}X_{\{l-1,i+1\}}Y_{\{i-1\}}Y_{[l+1,n]}Z_{[1,n]}\\
  &\leq& X_{[1,i-1]}X_{\{l-1\}}Y_{[l+1,n]}Y_{\{i+1\}}Z_{[1,n]}\\
  &\leq& \initial(f).
\end{eqnarray*}
Moreover, if $i\neq l+1$,
\begin{equation*}
  \initial(X_{[1,i-1]}Y_{[l+2,n]\setminus\{i\}}Y_{\{l-1,i+1\}}Z_{[1,n]}\Delta_{i,l+1}) = X_{[1,i]}Y_{[l+1,n]\setminus\{i\}}Y_{\{l-1,i+1\}}Z_{[1,n]} \leq \initial(f),
\end{equation*} and
\begin{equation*}
  \initial(X_{[1,i-2]\setminus \{l\}}X_{\{l-1,i+1\}}Y_{[l+1,n]}Z_{[1,n]}\Delta_{i-1,l})= X_{[1,i-1]\setminus \{l\}}X_{\{l-1,i+1\}}Y_{[l,n]}Z_{[1,n]} \leq \initial(f).
\end{equation*}
Furthermore, note that by Lemma \ref{lemma 1} and Lemma \ref{lemma 2}, for all $\gamma$,
\begin{multline*}
      \initial(f_\gamma g_\gamma)  \leq \max\{\initial(X_{[1,i-1]}Y_{[l+2,n]\setminus\{i\}}Y_{\{l-1,i+1\}}Z_{[1,n]}\Delta_{i,l+1}),\\ \initial(X_{[1,i-2]\setminus \{l\}}X_{\{l-1,i+1\}}Y_{[l+1,n]}Z_{[1,n]}\Delta_{i-1,l})\}.
\end{multline*}
\end{proof}

\begin{lem}
  \label{Grobner basis of J}
  Let $M = \bigcup_{i=1}^n$, for $M_i$ as in Corollary \ref{set-M}. A Gr\"obner basis of\: $\sum_{i=1}^n J_i$ is $G \cup M$, for $G$ as in Notation \ref{set G}.
\end{lem}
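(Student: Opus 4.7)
My plan is to verify Buchberger's criterion directly for $G \cup M$ with respect to the reverse lexicographic order fixed above. First, $G \cup M$ generates $\sum_{i=1}^n J_i$: by Corollary~\ref{set-M}, $J_i = \mathfrak{a}_i + (M_i)$, so $\sum_{i=1}^n J_i = \mathfrak{a} + (M)$, and $\mathfrak{a} = (G)$ by Lemma~\ref{Grobner basis of a}.

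S-polynomials then split into three types. For pairs $g_\alpha, g_\beta \in G$, the S-polynomial already reduces to $0$ with respect to $G$ by Lemma~\ref{Grobner basis of a}, hence with respect to the larger set $G \cup M$. For pairs $m, m' \in M$ of monomials, $S(m,m') = 0$ automatically. The only nontrivial case is a mixed pair $(g_\alpha, m)$. Here every $g_\alpha \in G$ has the form $c(x_j y_i - x_i y_j)$ with $j > i$, $c$ a squarefree monomial, and $\initial(g_\alpha) = c x_j y_i$. A direct computation gives
\[
S(g_\alpha, m) \;=\; -\frac{c\, x_i y_j \cdot m}{\gcd(c x_j y_i,\, m)},
\]
which is a \emph{monomial} $N$. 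To show $N$ reduces to $0$ it is enough to produce some $h \in G \cup M$ with $\initial(h) \mid N$: if $h \in M$ the remainder vanishes at once, and if $h \in G$ the remainder is a single monomial strictly smaller than $N$, so the division algorithm terminates after finitely many such steps.

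The core of the argument is a case split based on which family in $G$ contains $g_\alpha$ (the $g_k$, $g_{1,k}$, or $g_{k,n}$ families) and which $M_k$ contains $m$. The guiding observation is that each $m \in M_k$ carries all but one of the $z$-variables and is missing only the $x,y$-indices $k-1$ and $k+1$ (with variants at $k = 1, n$). Multiplying by the swapped term $c\, x_i y_j$ and dividing by the gcd therefore produces a monomial whose support either fits the defining pattern of another $M_{k'}$, yielding a divisor in $M$, or picks up a consecutive pair $x_{t+1} y_t$ (resp.\ $x_t y_{t-1}$), providing a divisor $\initial(g_{1,t})$ (resp.\ $\initial(g_{t,n})$) in $G$. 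After the first such reduction, Lemmas~\ref{lemma 1}, \ref{lemma 2} and Corollary~\ref{lemma 3} handle the remaining monomial tail exactly as in the pure-$G$ computations of Lemma~\ref{Grobner basis of a}.

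The main obstacle is purely combinatorial: for each pair $(g_\alpha, m)$ I need to track how the index sets of $c$ and $m$ overlap with $\{i-1, i, i+1, j-1, j, j+1\}$ and then name the correct divisor $h \in G \cup M$. The book-keeping grows at the boundary of the index range, since the families $M_1$ and $M_n$ are defined slightly differently from the middle $M_k$, and this is where I expect the most care to be required. Once the right divisor is identified in each sub-case, the verification of the reduction itself is routine and mirrors the computations already carried out in Lemma~\ref{Grobner basis of a}.
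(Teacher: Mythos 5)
Your skeleton matches the paper's: verify Buchberger's criterion, note that $G\cup M$ generates $\sum_i J_i$, dispose of $G$--$G$ pairs via Lemma \ref{Grobner basis of a} and of $M$--$M$ pairs trivially, and observe that a mixed $S$-polynomial $S(g_\alpha,m_\beta)$ is a single monomial $N$. The divergence, and the gap, is in how you finish the mixed case. You propose to find \emph{some} $h\in G\cup M$ with $\initial(h)\mid N$ and iterate, arguing that "the division algorithm terminates after finitely many such steps." Termination is not the issue --- the division algorithm always terminates; the issue is whether the remainder is zero. If the divisor found is a binomial $h\in G$, subtracting the appropriate multiple of $h$ from the monomial $N$ leaves another \emph{nonzero} monomial, so a chain of reductions by elements of $G$ alone can never produce remainder $0$; the chain must eventually hit a monomial divisible by an element of $M$. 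Your proposal never establishes that this happens, and the appeal to Lemmas \ref{lemma 1}, \ref{lemma 2} and Corollary \ref{lemma 3} to "handle the remaining monomial tail" is misplaced: those results produce standard expressions for certain binomial multiples of the $\Delta_{j,i}$ and control $G$--$G$ pairs; they say nothing about reducing a monomial to zero.

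The missing idea, which makes the entire case split on families of $G$ and on $M_1$ versus $M_n$ versus the middle $M_k$ unnecessary, is that each $M_i$ is \emph{closed under the swap $x_l\leftrightarrow y_l$}: writing $m_\beta = X_LY_KZ_{[1,n]\setminus\{i\}}$ with $K\cap L=\emptyset$ and $K\cup L$ fixed, the monomials $y_lm_\beta/x_l$, $x_km_\beta/y_k$ and $x_ky_lm_\beta/(x_ly_k)$ (in the cases where the relevant variables divide $m_\beta$) again lie in $M_i$, since they merely move an index between $K$ and $L$. Writing $g_\alpha = c(x_ly_k-x_ky_l)$ and computing $\gcd(\initial(g_\alpha),m_\beta)$ using squarefreeness and the fact that $x_s,y_s$ never both divide $m_\beta$, one finds that $N$ is exactly a monomial times one of these three swapped monomials. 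Hence $S(g_\alpha,m_\beta)$ is a monomial multiple of an element of $M$ and reduces to zero in a single step; no iteration, no case analysis on which family contains $g_\alpha$, and no boundary bookkeeping for $M_1$ and $M_n$ is needed.
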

\begin{proof}
  It follows from the proof of Lemma \ref{Grobner basis of a} and from Corollary \ref{set-M} that $G\cup M$ is a generating set of $\sum_{i=1}^n J_i$. Note that from the proof of Lemma \ref{Grobner basis of a} any $\{g_\alpha, g_\beta\} \subseteq G$, $S(g_\alpha, g_\beta)$ reduces to 0 with respect to $G$. Let $m_\alpha$ and $m_\beta$ be elements of $M$, and let $g = \gcd(\initial(m_\alpha),\initial(m_\beta))$. Note that $S(m_\alpha,m_\beta) = \frac{\initial(m_\alpha)}{g} m_\beta - \frac{\initial(m_\alpha)}{g} m_\beta=0$. So, we only have to check the $S$-polynomials of $g_\alpha \in G$ and $m_\beta \in M$.

  Let $g = \gcd(\initial(g_\alpha),\initial(m_\beta))$. Notice that, as $\initial(g_\alpha)$ and $\initial(m_\beta)$ are square free, so is $g$. Let \begin{equation*}
    f = S(g_\alpha,m_\beta) = \frac{\initial(m_\beta)}{g} g_\alpha - \frac{\initial(g_\alpha)}{g} m_\beta = \frac{\initial(g_\alpha)-g_\alpha}{g}m_\beta.
  \end{equation*}

  If $g$ divides $\initial(g_\alpha)-g_\alpha$, then we are done. So, suppose $g$ does not divide $\initial(g_\alpha)-g_\alpha$.  We can write $g_\alpha$ as $c(x_ly_k - x_ky_l)$ where $l= k+1$ or $l=k+2$. As $g$ is square free, $x_l,y_k,x_k$ and $y_l$ do not divide $c$. Since $g$ does not divide $\initial(g_\alpha)-g_\alpha$, $g$ does not divide $cx_ky_l$, thus $x_l$ or $y_k$ divide $g$ and $m_\beta$.

  Also, note that for all $x_s$, $y_t$ that divide $m_\beta$, by definition, $s \neq t$. Thus if $x_l$ divides $m_\beta$ then $y_l$ does not divide $m_\beta$. Similarly if $y_k$ divides $m_\beta$ then $x_k$ does not divide $m_\beta$. Also note that $m_\beta$ is square free.

  Thus $m_\beta = \begin{cases} a x_l \text{ where } x_l \text{ and } y_l \text{ do not divides } a &\quad \text{if } x_l \text{ divides } m_\beta \\
  a y_k \text{ where } x_k \text{ and } y_k \text{ do not divides } a & \quad \text{if } y_k \text{ divides } m_\beta.
  \end{cases}$

  If $x_l$ divides $m_\beta$, then:
  \begin{eqnarray*}
    g & = &\gcd(cx_ly_k,ax_l) \\
    &=& \gcd(cx_ly_k,a)\gcd(cx_ly_k,x_l) \\
    &=& \gcd(cx_ly_k,a)x_l \\
    &=& \gcd(c,a)\gcd(x_ly_k,a)x_l \\
    &=& \gcd(c,a)\gcd(y_k,a)x_l.
  \end{eqnarray*}

  If $y_k$ divides $m_\beta$, then:
  \begin{eqnarray*}
    g & = &\gcd(cx_ly_k,ay_k) \\
    &=& \gcd(cx_ly_k,a)\gcd(cx_ly_k,y_k) \\
    &=& \gcd(cx_ly_k,a)y_k \\
    &=& \gcd(c,a)\gcd(x_ly_k,a)y_k \\
    &=& \gcd(c,a)\gcd(x_l,a)y_k.
  \end{eqnarray*}

Thus, letting $b = \frac{c}{\gcd(c,a)}$,
\begin{equation*}
  f = \begin{cases}
  bx_k(\frac{y_l m_\beta}{x_l}) &\quad \text{if } x_l \text{ divides } m_\beta \text{ and } y_k \text{ does not}\\
  b y_l(\frac{x_k m_\beta }{y_k}) &\quad \text{if } y_k \text{ divides } m_\beta \text{ and } x_l \text{ does not}\\
  b(\frac{x_k y_l m_\beta}{x_l y_k}) & \quad \text{otherwise}.
  \end{cases}
\end{equation*}

Notice, that for some $i$, $m_\beta \in M_i \subseteq M$, which implies that $m_\beta = X_LY_KZ_{[1,n]\setminus\{i\}}$ with $L \cap K = \emptyset$. If $x_l$ divides $m_\beta$, then $l \in L$, so
  \begin{equation*}
    \frac{y_lm_\beta}{x_l} = X_{L\setminus\{l\}}Y_{K\cup\{l\}}Z_{[1,n]\setminus\{i\}}.
  \end{equation*} As $(L\setminus\{l\})\cap(K\cup\{l\}) = \emptyset$ and $(L\setminus\{l\})\cup(K\cup\{l\})=L\cup K$, we have that $\frac{y_lm_\beta}{x_l} \in M_i \subseteq M$. If $y_k$ divides $m_\beta$, $k \in K$, so we have that
    \begin{equation*}
      \frac{x_k m_\beta}{y_k} = X_{L\cup\{k\}}Y_{K\setminus\{k\}}Z_{[1,n]\setminus\{i\}}.
    \end{equation*} As
    $(L\cup\{k\})\cap(K\setminus\{k\}) = \emptyset$ and
    $(L\cup\{k\})\cup(K\setminus\{k\})=L\cup K$, we have that $\frac{x_k m_\beta}{y_k} \in M_i \subseteq M$. If $x_l$ and $y_k$ divide $m_\beta$, then $l \in L$ and $k \in K$, so we have that
      \begin{equation*}
        \frac{x_ky_lm_\beta}{x_ly_k} = X_{(L\cup\{k\})\setminus\{l\}}Y_{(K\cup\{l\})\setminus\{k\}}Z_{[1,n]\setminus\{i\}}.
      \end{equation*} As
      $((L\cup\{k\})\setminus\{l\})\cap((K\cup\{l\})\setminus\{k\}) = \emptyset$ and
      $((L\cup\{k\})\setminus\{l\})\cup((K\cup\{l\})\setminus\{k\})=L\cup K$, we have that $\frac{x_ky_lm_\beta}{x_ly_k} \in M_i \subseteq M$. Thus $f$ reduces zero with respect to $G \cup M$, and we are done. \end{proof}

\subsection{Main Result in the Special Case}

Now, we have all the tools we need to prove the main result of this section.

\linksum*

\begin{proof}
  By Lemma \ref{a link} each $J_i$ is a geometric link and by Lemma \ref{is Res Int} $J$ is a residual intersection. Let $J' = \sum_{i=1}^n J_i$. Note, since $J' \subseteq J$, it is enough to show that $J_p = J_p'$ for all the associated primes $p$ of $J'$. We begin by arguing that the set of associated primes of $J'$ is just the set of minimal primes of $J'$.

  From Lemma \ref{Grobner basis of J} we have $G \cup M$ is a Gr\"obner basis of $J'$. By definition, $\{\initial(g_\alpha) \; \vert  \; g_\alpha \in G \cup M\}$ generates $\initial(J')$.  Note that all elements in $G$ are sums of square free monomials and all elements in $M$ are square free monomials. Thus, we can see that $\initial(J')$ is square free and thus $J'$ is reduced \cite[Proposition 3.3.7]{HH}. So, the set of associated of $J'$ is the set of minimal primes of $J'$.

  Let $m = (x_1,\dots,x_n,y_1,\dots,y_n)$. Note that $m$ is a prime ideal of $R$. We now argue that $\support(J/J')_R \subseteq V(m)$. Suppose that $q$ is a prime that does not contain $m$, then $I_q$ is a complete intersection, so $J_q = J'_q$ \cite[Corollary 2.18]{KMU}. Thus $q \notin \support(J/J')$.

  All that is left to show is that no minimal prime of $J'$ contains $m$. We now argue that $J \subseteq m$. Suppose $J$ is not contained in $m$, then $J_m = R_m$, which implies that $\mathfrak{a}_m = I_m$, which further implies that $I_m$ is generated by at most $n$ elements. However, we know that $\mu(I_m) = \binom{n}{2} > n.$ This is a contraction, so we have $J' \subseteq J \subseteq m$.

  Suppose there exists a minimal prime $p$ of $J'$ such that $m \subseteq p$. Since $p$ is a minimal prime of $J'$ and $m$ contains $J'$, $p = m$. Since $J$ is between $J'$ and $m$, $m$ is a minimal prime of $J$. Notice $I$ satisfies $G_{2n}$, and notice that $I$ is Cohen-Macaulay, which implies its links are also Cohen-Macaulay \cite{PS}. So, we may apply \cite[Proposition 1.7]{U} every $n$-residual intersection of $I$ is unmixed of height $n$. So $J$ is unmixed of height $n$. But $\height(m) = 2n$, so $m$ cannot be a minimal prime of $J$. Thus, no minimal primes of $J'$ contain $m$ and we are done. \end{proof}

\section{Main Result}\label{Main Section}

Now that we have shown that our main result is true in a special case, all that is left is to generalize. In order to do so, we will first need to show that the residual intersections we are working with commute with surjective maps (Theorem \ref{general specialization}). Note that, in general, colon ideals do not commute with surjective maps. Once we have shown our residual intersections commute with surjective maps, we will use this result to show that a generic $n$-residual intersection of an ideal generated by the maximal minors of a generic $2\times n$ matrix is the sum of links (Lemma \ref{MainLem}). This will allow us to apply Theorem \ref{general specialization} to a generic matrix to get our main result.

\begin{thm}
  \label{general specialization}
  Let $R$ be a Noetherian local ring, $\pi:S \rightarrow \overline{S}$ be a surjection of local $R$-algebras whose kernel is generated by an $S$-regular sequence, $\underline{l}$, and assume that the map $R \rightarrow S$ is local. Assume $S$ is a Gorenstein ring and that $\overline{S}$ is flat over $R$. Let $I'$ be an $R$-ideal that has height $h \leq n$, $I$ be the extension of $I'$ to $S$, and $J = \mathfrak{a}:I$ be an $S$-ideal such that $\mathfrak{a} \subset I$ is an $n$-generated $S$-ideal.

  Suppose $I'$ is $G_n$, $R/I'$ is Cohen-Macaulay, and $\depth(R/I'^j) \geq \dim R-h-j+1$ for $1 \leq j \leq n-h$. Also suppose that $\Ext^{n+1}_R(R/(I')^{n-h+1},R)=0$. If $\pi(\mathfrak{a}):\pi(I)$ is an $n$-residual intersection then:
  \begin{enumerate}[label=$($\arabic*$)$]
    \item $\pi(J) = \pi(\mathfrak{a}):\pi(I)$
    \item $\underline{l}$ is a regular sequence on $S/J$.
  \end{enumerate}
\end{thm}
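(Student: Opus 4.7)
The plan is to prove both assertions simultaneously by constructing a resolution of $S/J$ whose components are pulled back from $R$-modules, so that the flatness of $\overline S$ over $R$ controls the base change. The natural framework is the residual approximation complex of Hassanzadeh and Cho--Ulrich, which the hypotheses are tailor-made to support.

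First I would reduce to the case where $\underline l$ has length one by induction on the length of the sequence, checking at each step that the intermediate quotient $S/(l_1,\dots,l_i)$ remains $R$-flat and that the residual-intersection hypothesis on the final image of $\pi(\mathfrak a):\pi(I)$ descends to each intermediate stage. For the one-step case, write $\overline S = S/(l)$. Using the hypotheses on $I'$ --- that $I'$ is $G_n$, $R/I'$ is Cohen--Macaulay, $\depth(R/(I')^j) \geq \dim R - h - j + 1$ for $1 \leq j \leq n-h$, and $\Ext^{n+1}_R(R/(I')^{n-h+1},R)=0$ --- one constructs an acyclic complex $\mathcal{Z}_\bullet$ of finitely generated free $S$-modules with $H_0(\mathcal{Z}_\bullet)=S/J$. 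This complex is assembled from a Koszul complex on a generating set of $\mathfrak a$ and from the symmetric powers $\Sym^j(I'/(I')^2)$ with $j \leq n-h+1$, all of which are pulled back from $R$ along the local map $R \to S$.

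Now apply $-\otimes_S \overline S$. Because the components of $\mathcal Z_\bullet$ are pulled back from $R$, this tensor product is naturally $\mathcal Z_\bullet \otimes_R \overline S$, which remains acyclic since $\overline S$ is $R$-flat. Its zeroth homology is $\overline S / \pi(J)$, while applying the same complex construction directly in $\overline S$ --- permissible precisely because $\pi(\mathfrak a):\pi(I)$ is an $n$-residual intersection by hypothesis --- identifies this zeroth homology with $\overline S /(\pi(\mathfrak a):\pi(I))$. Comparing the two gives $\pi(J) = \pi(\mathfrak a):\pi(I)$, proving (1). The same base-change acyclicity simultaneously delivers $\Tor^S_i(\overline S, S/J) = 0$ for all $i \geq 1$, which, since $\overline S$ is the Koszul quotient $S/(\underline l)$, is equivalent to $\underline l$ being a regular sequence on $S/J$, proving (2).

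The main obstacle is establishing the acyclicity of $\mathcal{Z}_\bullet$ together with the correct $H_0$ under the exact hypotheses given: the depth bounds on $R/(I')^j$ control the acyclicity in degrees $\leq n-h$ of the spectral sequence arising from the double complex defining $\mathcal Z_\bullet$, and the $\Ext$-vanishing hypothesis is precisely what is needed at the top degree $n-h+1$ to pin down $H_0$ as $S/J$ rather than some larger quotient. Verifying this tracks through the standard filtration arguments for the residual approximation complex, after which the specialization statements follow essentially formally from the $R$-flatness of $\overline S$.
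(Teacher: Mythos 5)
Your strategy is genuinely different from the one used in the paper (which proceeds by induction on $n$ through a chain of intermediate geometric residual intersections $J_k=\mathfrak{a}_k:I$, works with the canonical module of $A=S/J_{n-1}$ via $\omega_A\cong I^{n-h}A$, and reduces everything to the single vanishing $\Ext^1_A(I^{n-h+1}A,\omega_A)=0$), but as written it has a gap at its central step. You claim that because the components of $\mathcal{Z}_\bullet$ are pulled back from $R$, the complex $\mathcal{Z}_\bullet\otimes_S\overline{S}$ ``is naturally $\mathcal{Z}_\bullet\otimes_R\overline{S}$, which remains acyclic since $\overline{S}$ is $R$-flat.'' This does not work: the differentials of a residual approximation complex involve the elements $a_1,\dots,a_n$ and the coefficient matrix expressing them in terms of the generators of $I$, and these live in $S$, not in $R$ (the ideal $\mathfrak{a}$ is an arbitrary $n$-generated $S$-ideal, not the extension of an $R$-ideal). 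So $\mathcal{Z}_\bullet$ is not the base change of a complex of $R$-modules, and $R$-flatness of $\overline{S}$ gives no control over the exactness of $-\otimes_S\overline{S}$ applied to an $S$-complex. Making this step honest would require showing $\Tor^S_{>0}(\overline{S},-)$ vanishes on the syzygies of $S/J$ in your resolution, i.e.\ that $\underline{l}$ is regular on those syzygies --- which is essentially assertion (2), the thing you are trying to prove. (This is exactly why the paper instead proves the compatibility by applying $\Hom_A(I^{n-h+1}A,-)$ to $0\to\omega_A\to\omega_A\to\omega_A/l\omega_A\to 0$ and checking one $\Ext^1$ vanishes.)

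There is a second, independent gap: you invoke acyclicity of the residual approximation complex with $H_0(\mathcal{Z}_\bullet)=S/J$ as if it were available under the stated hypotheses. The known acyclicity theorems for these complexes require sliding-depth conditions on the Koszul homology of $I$, not the conditions assumed here (depth bounds on the powers $R/(I')^j$ plus a single $\Ext$ vanishing, which are Ulrich's hypotheses tailored to the duality argument); and even under sliding depth, identifying $H_0$ with $S/J$ rather than with a possibly smaller ``disguised'' residual intersection $S/K$, $\mathfrak{a}\subseteq K\subseteq J$, is a delicate point that cannot be waved through with ``standard filtration arguments.'' Both issues would have to be resolved before the rest of your outline (which is formally fine: right-exactness does give $H_0(\mathcal{Z}_\bullet\otimes_S\overline{S})=\overline{S}/\pi(J)$, and comparison with the complex built over $\overline{S}$ would then yield (1)) could be considered a proof.
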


\begin{proof}

  Let $\underline{l} = l_1,\dots,l_m$ and let $\underline{l}_k = l_1,\dots,l_k$. Let define $\pi_k$ to be the natural surjections $S \rightarrow S/(\underline{l}_k)$. By \cite[Theorem 22.6]{M2} $S/(\underline{l}_k)$ is flat over $R$. By \cite[Lemma 4.1]{HU}, we have that $\height(\pi_k(\mathfrak{a}):\pi_k(I))\geq n$. So, we may reduce to the case where $\underline{l}$ is a single element. Let said element be $l$. Thus, we can reduce to $\overline{S} = S/(l)$. Notice that both $S$ and $\overline{S}$ are flat over $R$.

  Because the maps $R \rightarrow S$ and $R \rightarrow \overline{S}$ are flat and local, and the rings $S$ and $\overline{S}$ are local Gorenstein rings, the special fibers are Gorenstein \cite[Theorem 23.4]{M2}. Therefore, again as $S$ and $\overline{S}$ are flat $R$-algebras, we have that $\Ext^{n+1}_S(S/I^{n-h+1},S)=0$ and $\Ext^{n+1}_{\overline{S}}(\overline{S}/\overline{I}^{n-h+1},\overline{S})=0$. Also, by flatness, we have the going down property, thus $I$ and $\overline{I}$ are $G_n$.

  Let $M$ be $R/(I')^j$ for $1 \leq j \leq n-h$, let $m$ be the maximal ideal of $R$, and let $F = S \otimes R/m$. By flatness we have that $\depth_S(M\otimes_R S) = \depth_R(M) + \depth_S(F)$ \cite[Theorem 50]{M1} and that $\dim_S(M\otimes_R S) = \dim_R(M) + \dim_S(F)$ \cite[Theorem 51ii]{M1}. As noted previously, $F$ is Gorenstein, and therefore we have that $S/I$ is Cohen-Macaulay, and $\depth(S/I^j) \geq \dim S-h-j+1$ for $1 \leq j \leq n-h$. A similar argument shows that $\overline{S}/\overline{I}$ is Cohen-Macaulay, and $\depth(\overline{S}/\overline{I}^j) \geq \dim \overline{S}-h-j+1$ for $1 \leq j \leq n-h$.

  Without loss of generality, by adjoining an indeterminate to $S$ and localizing, we may assume $S$ has an infinite residue field.

  We will prove our theorem by induction on $n$. Suppose that $n = h$. By assumption we have that $\mu(\mathfrak{a}) = h$. As $\pi(\mathfrak{a}):\pi(I)$ is an $h$-residual intersection, the height of $\height(\pi(\mathfrak{a})) = h$. So, we have the inequality $h \geq \height(\mathfrak{a}) \geq \height(\pi(\mathfrak{a})) = h$. This implies that $\mathfrak{a}$ is a complete intersection ideal, and thus unmixed. Since $\height(\mathfrak{a}) = \height(\pi(\mathfrak{a}))$, $l$ is not contained in any minimal prime of $\mathfrak{a}$, and thus is not contained in any associated prime of $\mathfrak{a}$. Thus $l$ is regular on $S/\mathfrak{a}$, and (1) and (2) follow from \cite[Proposition 2.8]{HU-old}. 

  Now suppose $n > h$. Let $\overline{\cdot}$ represent images mod $l$. Note that, as $\overline{I}$ is $G_{n}$, by \cite[Corollary 1.6a]{U} we can select general elements $\underline{a} = a_1,\dots,a_n$  that are generators of $\mathfrak{a}$ such that for $\mathfrak{a}_k = (a_1,\dots,a_k)$, $\overline{\mathfrak{a}_k}:\overline{I}$ is a geometric $k$-residual intersection for all $k <n$. For notational purposes write $J_k = \mathfrak{a}_k : I$.  Note, $\overline{J_k} = \overline{\mathfrak{a}_k}:\overline{I}$ for all $h \leq k <n$ by the induction hypothesis. As for all $k<n$, $\height(J_{k}) \geq \height(\overline{J_k})$ and  $\height(I + J_{k}) \geq \height(\overline{I+J_k})$, $J_k$ is a geometric $k$-residual intersection.

  Let $S_k = S/J_k$ and $A = S_{n-1}$. As the depth properties of $I'$ pass to $I$, and since $S$ is Gorenstein, $I$ is $AN_{n-1}$ \cite[Theorem 2.9]{U}. In particular, $A$ is Cohen-Macaulay. Let $\omega_A$ be the canonical module of $A$. Note that if an ideal is contained in $S$, $\overline{\cdot}$ represents images in $\overline{S}$, and if an ideal is contained in $A$, $\overline{\cdot}$ represents images in $\overline{A} = A/lA$.

  By induction $l$ is regular on $A$ and thus we have the exact sequence
  \begin{equation*}
    0 \rightarrow \omega_A \xrightarrow{\cdot l} \omega_A \rightarrow \omega_A/l\omega_A \rightarrow 0.
  \end{equation*}

  We will first show that, after applying the functor $\Hom_A(I^{n-h+1}A,-)$, we have the short exact sequence
  \begin{equation}\label{homEq}
    0 \rightarrow \Hom_A(I^{n-h+1}A,\omega_A) \xrightarrow{\cdot l} \Hom_A(I^{n-h+1}A,\omega_A) \rightarrow \Hom_A(I^{n-h+1}A,\omega_A/l\omega_A) \rightarrow 0.
  \end{equation}

  For this it is enough to show that $\Ext_A^1(I^{n-h+1}A,\omega_A) = 0.$

  As $A$ is Cohen-Macaulay, we have that $\Ext_A^1(I^{n-h+1}A,\omega_A) \cong \Ext_S^{n}(I^{n-h+1}A,\omega_S)$ and since $S$ is Gorenstein, $\omega_S \cong S$. Note that, again as $S$ is Gorenstein, $\depth(S/I^k) \geq \dim S-h-k+1$ implies that $\Ext^j(S/I^k,S) = 0$ for all $j \geq h+k$. Thus $\Ext_{S}^{h+k}(S/I^k,S)=0$ for $1 \leq k \leq n-h+1$. From the proof of \cite[Theorem 4.1]{CEU}, we have that  $\Ext_S^{n}(I^{n-h+1}A,S) = 0$. For the convenience of the reader we will reprove the relevant portion of the proof of \cite[Theorem 4.1]{CEU} and use our stronger assumptions to simplify it.

  In order to do this, we will show that given $i$ such that $0 \leq i \leq n-2$, $\Ext_S^{h+j-1}(I^{j}S_k,S) = 0$ for $0 \leq k \leq i+1$ and $k - h + 2 \leq j \leq i-h+3$. Note if $k = i+1 = n-1$, and $j = i-h+3 = n-h+1$, then we are done.

  To prove this, we use induction on $k$. First suppose $k = 0$. If $j \leq 0$, then $h \geq 2$ and hence $I^jS_0 = S$, so we are done.

  If $j \geq 1$, then $I^jS_0 = I^j$, as $I\cap J_k =a_k$ for $k <n$ and $a_0 = 0$.  As $\Ext_{S}^{h+j}(S/I^j,S)=0$ for $1 \leq j \leq n-h+1$, $\Ext_{S}^{h+j-1}(I^j,S)=0$ for $1 \leq j \leq i-h+3 \leq n-h+1$ and we are done.

  So, now suppose $1 \leq k \leq i+1$. We first show that the following sequence is exact: \begin{equation*}
    C_{jk}: 0 \rightarrow I^{j-1}S_{k-1} \xrightarrow{\cdot a_k} I^jS_{k-1} \rightarrow I^jS_{k}\rightarrow 0.
\end{equation*}
  If $j\leq 1$, then $k \leq h-1$, so $J_k = \mathfrak{a}_k$ and $J_{k-1}= \mathfrak{a}_{k-1}$, are complete intersections and $a_k$ is a regular element on $S_{k-1}$, so we are done. For $j \geq 2$, exactness is given by \cite[Lemma 2.7]{U} with $s = n-1$ and $r = n-h$, and $J_{n-1}$ as the geometric $n-1$-residual intersection.

  Applying $\Hom_S(-,S)$ to the exact sequence $C_{j,k}$ we get
  \begin{equation*}
    \Ext^{h+j-2}(I^{j-1}S_{k-1},S) \rightarrow \Ext^{h+j-1}(I^{j}S_{k},S) \rightarrow \Ext^{h+j-1}(I^{j}S_{k-1},S) \rightarrow \dots
  \end{equation*}

  By induction we have that $\Ext^{h+j-2}(I^{j-1}S_{k-1},S) = \Ext^{h+j-1}(I^{j}S_{k-1},S) = 0$, thus we obtain $\Ext^{h+j-1}(I^{j}S_{k},S) = 0$ and we are done.

  Since $I$ satisfies $AN_{n-1}$, $J_{n-1}$ is unmixed of height $n-1$ \cite[Proposition 1.7]{U}. Note that $\height(J_{n-1}+I) \geq n$, as $J_{n-1}$ is geometric. Thus $I$ is not contained in any associated prime of $J_{n-1}$. So, localizing at any associated prime $p$ of $A$, which is the image of an associated prime of $J_{n-1}$ in $A$, we get that $(IA)_p = A_p$. Thus, locally, at every associated prime of $A$, the map $IA\otimes_A\omega_A \rightarrow I\omega_A$ is an isomorphism. Thus the kernel of $IA\otimes_A\omega_A \rightarrow I\omega_A$ is an $A$-torsion module. So,
  \begin{equation*}
    \Hom_A(I\omega_A,\omega_A) \cong \Hom_A(IA\otimes_A\omega_A,\omega_A).
  \end{equation*}

  Now we prove that \begin{eqnarray}\label{eq2}
    \Hom_{A}(I^{n-h+1}A,\omega_A) &\cong& \mathfrak{a}A : IA.
  \end{eqnarray}

  Note, by \cite[Theorem 2.9]{U} $\omega_A \cong I^{n-h}A$. So we have
  \begin{eqnarray*}
    \Hom_{A}(I^{n-h+1}A,\omega_A) &\cong& \Hom_{A}(I\omega_{A}, \omega_{A}) \\
    &\cong& \Hom_{A}(IA\otimes_A\omega_{A}, \omega_{A})\\
    &\cong& \Hom_{A}(IA, \Hom_{A}(\omega_{A},\omega_{A}))\\
    &\cong& \Hom_{A}(IA, A)\\
    &\cong& \mathfrak{a}A : IA.
  \end{eqnarray*} The fourth isomorphism follows from the fact that $A$ is $S_2$, and the fifth holds because $\mathfrak{a}A$ is generated by a single non-zero-divisor element on $A$.

  Note that $\overline{A} \cong \overline{S}/\overline{J_{n-1}}$. Induction gives us that $\overline{J_{n-1}} \cong \overline{\mathfrak{a}_{n-1}}:\overline{I}.$ Applying \cite[Theorem 2.9]{U} we have that $\omega_{\overline{A}} \cong \overline{I^{n-h}}\overline{A}.$ By the same reasoning as above,
  \begin{equation*}
    \Hom_{\overline{A}}(\overline{I}\omega_{\overline{A}},\omega_{\overline{A}}) \cong \Hom_{\overline{A}}(\overline{I}\overline{A}\otimes_{\overline{A}}\omega_{\overline{A}},\omega_{\overline{A}}).
  \end{equation*}

  As $A$ is Cohen-Macaulay and $l$ is regular on $A$, we have that $\omega_{A}/l\omega_A \cong \omega_{\overline{A}} \cong \Hom_{\overline{A}}(\overline{A}, \omega_{\overline{A}})$.

  There exists a natural surjection $I^{n-h+1}A/lI^{n-h+1}A \rightarrow \overline{I^{n-h+1}}\overline{A}$. Note that $(IA+lA)/lA = \overline{I}\overline{A}$ and that $\height(\overline{I}\overline{A}) \geq 1$ because $\overline{J_{n-1}}$ is a geometric residual intersection of $\overline{I}$. Since $l$ is regular on $A$, we have that $\height(IA+lA) \geq 2$, which implies that locally at every minimal prime of $lA$ and for every integer $j$, $I^jA \cap lA = lI^jA$. Note because $\overline{A}$ is Cohen-Macaulay, the associated primes of $\overline{A}$ are all minimal, thus the kernel of $I^{n-h+1}A/lI^{n-h+1}A \rightarrow \overline{I^{n-h+1}}\overline{A}$ is an $\overline{A}$-torsion module. So,
  \begin{eqnarray}\label{eq3}
    \Hom_{\overline{A}}(I^{n-h+1}A/lI^{n-h+1}A,\omega_{\overline{A}})
    &\cong& \Hom_{\overline{A}}(\overline{I^{n-h+1}}\overline{A},\omega_{\overline{A}}).
  \end{eqnarray}\\

  We now prove that \begin{eqnarray}\label{eq44}
    \Hom_A(I^{n-h+1}A,\omega_A/l\omega_A)
    &\cong& \overline{\mathfrak{a}}\overline{A} : \overline{I}\overline{A}.
  \end{eqnarray}

  We have that
  \begin{eqnarray*}
    \Hom_A(I^{n-h+1}A,\omega_A/l\omega_A) &\cong& \Hom_A(I^{n-h+1}A,\Hom_{\overline{A}}(\overline{A}, \omega_{\overline{A}}))\\
    &\cong&   \Hom_{\overline{A}}(I^{n-h+1}A/lI^{n-h+1}A,\omega_{\overline{A}})\\
    &\cong& \Hom_{\overline{A}}(\overline{I^{n-h+1}}\overline{A},\omega_{\overline{A}})\\
    &\cong& \overline{\mathfrak{a}}\overline{A} : \overline{I}\overline{A}.
  \end{eqnarray*}
  The third isomorphism follows from (\ref{eq3}). The forth isomorphism follows from the same argument as the isomorphism $\Hom_{A}(I^{n-h+1}A,\omega_A) \cong \mathfrak{a}A : IA$.

  Applying (\ref{eq2}) and (\ref{eq44}) to exact sequence (\ref{homEq}) we have the exact sequence
  \begin{equation*}
    0 \rightarrow \mathfrak{a}A : IA \xrightarrow{l} \mathfrak{a}A : IA \xrightarrow{\pi}  \overline{\mathfrak{a}}\overline{A} : \overline{I}\overline{A} \rightarrow 0.
  \end{equation*}
  Thus $\overline{J} = \overline{\mathfrak{a}}:\overline{I}$.

  Now it remains to show that $l$ is regular on $S/J$. By \cite[Proposition 1.7]{U} $J$ is unmixed of height $n$. Note that $\pi$ induces a surjective map $S/J \rightarrow \overline{S}/\overline{J}$. As the height of $\overline{J}$ is at least $n$ and since all associated primes of $J$ have height $n$, $l$ is none of these associated primes, therefore $l$ is regular on $S/J$. \end{proof}

Now that we have established this general setting, we shall show that it can be applied to the setting of our main result. For $n > 5$, the main tool we will need for that is a representation theoretic result by Raicu, Weyman, and Witt \cite[Theorem 4.3]{RWW} to establish the vanishing of $\Ext$ required by the above theorem (Corollary \ref{geq5}). Note that the use of this result is why we require characteristic 0 in this chapter. However, in the case where $n =4$, this result alone is not enough. We cannot actually apply our theorem to the case of $n=4$, however we can modify the proof and apply the work of \cite{ABW} (Lemma \ref{eq4}).

Let $\mathcal{A}$, $\mathcal{B}$ be Gorenstein rings containing a field of characteristic zero. For $n \geq 4$, let $R' = \mathcal{A}[x_1,\dots,x_n,y_1,\dots,y_n]$ and $S' = \mathcal{B}[x_1,\dots,x_n,y_1,\dots,y_n]$. Select prime ideals $p$ of $R'$ and $q$ of $S'$ and let $R = R'_p$ and $S = S'_q$.

Let $M = \begin{pmatrix}
  x_1 \dots x_n \\
  y_1 \dots y_n
\end{pmatrix}$. Let $I = I_2(M) \subsetneq S$.
Define $h = \height(I) = n-1$. Let $\mathfrak{a} \subset I$ be an ideal such that $J = \mathfrak{a}:I$ is a $n$-residual intersection.

\begin{cor}\label{geq5}
  Suppose $n \geq 5$. If there exists a surjective local map $\pi: S \rightarrow R$ of algebras over $\mathbb{Z}[x_1,\dots,x_n, y_1,\dots,y_n]$ such that the kernal of $\pi$ is generated by a regular sequence $\underline{l}$, and if $\pi(\mathfrak{a}):\pi(I)$ is an $n$-residual intersection then:
  \begin{enumerate}[label=$($\alph*$)$]
    \item $\pi(J) = \pi(\mathfrak{a}):\pi(I)$
    \item $\underline{l}$ is a regular sequence in $S/J$.
  \end{enumerate}
\end{cor}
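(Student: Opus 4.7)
The plan is to deduce this corollary from Theorem \ref{general specialization} by introducing a suitable common base ring through which both $R$ and $S$ factor. Let $\mathfrak{p}_0$ denote the contraction to $\mathbb{Z}[x_1,\dots,x_n,y_1,\dots,y_n]$ of the maximal ideal of $S$, which by the locality of $\pi$ equals the contraction of the maximal ideal of $R$. I would then set $T = \mathbb{Q}[x_1,\dots,x_n,y_1,\dots,y_n]_{\mathfrak{p}_0}$, a regular and hence Gorenstein local ring. Since $\mathcal{A}$ and $\mathcal{B}$ contain $\mathbb{Q}$, they are flat over $\mathbb{Q}$, so both $R$ and $S$ become flat local $T$-algebras, and $\pi:S\to R$ remains a surjection of $T$-algebras with kernel the regular sequence $\underline{l}$. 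Taking $I' := I_2(M)T$, one obtains a proper $T$-ideal of height $h=n-1$ whose extension to $S$ is $I$, so the structural hypotheses of Theorem \ref{general specialization} are in place.

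Most of the remaining hypotheses of Theorem \ref{general specialization} would then follow from well-known properties of the $2\times 2$ minors of a generic $2\times n$ matrix: $T/I'$ is Cohen-Macaulay and $I'$ is $G_\infty$, in particular $G_n$. Because $h = n-1$, we have $n-h = 1$, so the depth bound on powers of $I'$ reduces to the Cohen-Macaulayness of $T/I'$ in the single case $j=1$, which is already known. The hypothesis that $\pi(\mathfrak{a}):\pi(I)$ is an $n$-residual intersection is part of the corollary's statement.

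The only serious step is the Ext vanishing $\Ext^{n+1}_T(T/(I')^2, T) = 0$, which corresponds to $n-h+1 = 2$. Here is where characteristic zero and the bound $n\geq 5$ enter, via \cite[Theorem 4.3]{RWW}: that result describes $\Ext^{\bullet}_T(T/(I')^k, T)$ explicitly as a $\mathrm{GL}_2\times \mathrm{GL}_n$-representation, from which one reads off that $\Ext^{n+1}$ against $T/(I')^2$ vanishes once $n\geq 5$; the passage from the standard graded polynomial ring to its localization $T$ is harmless since the Ext modules are graded. Granting this vanishing, Theorem \ref{general specialization} yields both conclusions (a) and (b) of the corollary.

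The main obstacle in this plan is locating the correct entry in the Raicu-Weyman-Witt formula for our precise pair of indices $(j, k) = (n+1, 2)$ and checking that the critical bound the formula yields is exactly $n\geq 5$. Everything else is a matter of bookkeeping: verifying that the chosen base $T$ renders the structure maps local, that flatness of $R, S$ over $T$ is preserved, and that the standard determinantal properties used above are all applicable in the localized setting.
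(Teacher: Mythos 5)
Your proposal takes essentially the same route as the paper's proof: reduce to Theorem \ref{general specialization} over a localized polynomial base ring, note that with $h=n-1$ the depth hypotheses collapse to Cohen--Macaulayness of the determinantal ring, and obtain the one substantive hypothesis, the vanishing of $\Ext^{n+1}$ against the square of the ideal (the case $j=n-h+1=2$ of $\Ext_S^{h+j}(S/I^{j},S)=0$ for $1\le j\le n-3$), from \cite[Theorem 4.3]{RWW}, the constraint $2\le n-3$ being exactly where $n\ge 5$ enters. One small correction: $I'$ is not $G_\infty$ once $n\ge 6$, since at a prime containing $I_1(M)$ one has $\mu(I'_p)=\binom{n}{2}>2n$; but $I'$ is prime of height $n-1$, hence $G_{2n}$ and in particular $G_n$, which is all that Theorem \ref{general specialization} requires.
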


\begin{proof}

  By \cite[Theorem 4.3]{RWW} $\Ext_S^{h+j}(S/I^j,S) = 0$ for $1 \leq j \leq n-3$, so we are done by Theorem \ref{general specialization}.
\end{proof}

  \begin{lem}\label{eq4}
    Suppose $3 \leq n \leq 4$. If there exists a surjective local map $\pi: S \rightarrow R$ of algebras over $\mathbb{Z}[x_1,\dots,x_4, y_1,\dots,y_4]$ such that the kernel of $\pi$ is generated by a regular sequence $\underline{l}= l_1,\dots,l_m$ where $l_j \notin (x_1,\dots,x_4,y_1,\dots,y_4)$ for all $1 \leq j \leq m$, and if $\pi(\mathfrak{a}):\pi(I)$ is an $n$-residual intersection then:
    \begin{enumerate}[label=$($\alph*$)$]
      \item $\pi(J) = \pi(\mathfrak{a}):\pi(I)$
      \item $\underline{l}$ is a regular sequence in $S/J$.
    \end{enumerate}
  \end{lem}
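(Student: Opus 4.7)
The approach is to mirror the proof of Theorem \ref{general specialization}, reusing most of its machinery, while substituting the invocation of the Raicu--Weyman--Witt vanishing (which underlies Corollary \ref{geq5} and requires $n\geq 5$) by an explicit resolution coming from \cite{ABW}. The additional hypothesis $l_j\notin(x_1,\dots,x_4,y_1,\dots,y_4)$ is precisely what will let me transport this resolution across the specialization $\pi$.

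First I would dispose of the case $n=3$: here $h=\height(I)=n-1=2$ and $n-h+1=1$, so the $\Ext$-vanishing hypothesis of Theorem \ref{general specialization} collapses to $\Ext^{n+1}_R(R/I',R)=\Ext^{4}_R(R/I',R)=0$, which is automatic since $R/I'$ is perfect of grade $2$ and hence of projective dimension $2$. Thus for $n=3$ the lemma is a direct application of Theorem \ref{general specialization} and the extra hypothesis on $\underline l$ is not even needed. Next, as in the first paragraph of the proof of Theorem \ref{general specialization}, I would reduce to the case where $\underline l$ is a single element $l$, observing that the condition $l_j\notin(x_i,y_i)$ is stable under passing to intermediate quotients $S/(l_1,\dots,l_k)$.

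The substantive case is $n=4$. Here the only step in the proof of Theorem \ref{general specialization} that breaks is the initial Ext-vanishing $\Ext^{h+j-1}_S(I^j,S)=0$ at $j=n-h+1=2$, equivalently $\Ext^{4}_S(I^{2},S)=0$, equivalently $\Ext^{5}_S(S/I^{2},S)=0$. To establish this I would appeal to the Akin--Buchsbaum--Weyman resolution of $I^{2}$ from \cite{ABW}, which in the $2\times 4$ case can be used to bound $\projdim_S(S/I^{2})\leq 4$, or equivalently $\depth(S/I^{2})\geq\dim S-4$. The hypothesis $l\notin(x_1,\dots,x_4,y_1,\dots,y_4)$ is what makes $l$ a nonzerodivisor on each term and each syzygy module of the ABW complex, so that the projective dimension bound is preserved after reducing modulo $l$. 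With this input the inductive argument via the short exact sequences $C_{j,k}$ from the proof of Theorem \ref{general specialization} yields $\Ext^{n}_S(I^{n-h+1}A,S)=0$ just as in the original proof.

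The remainder of the argument is then taken verbatim from Theorem \ref{general specialization}: the short exact sequence (\ref{homEq}), the identification $\Hom_A(I^{n-h+1}A,\omega_A)\cong \mathfrak a A:IA$ via $\omega_A\cong I^{n-h}A$, the analogous identification over $\overline A$, and the unmixedness of $J$ together with $\height(\overline J)\geq n$ to conclude both $\overline J=\overline{\mathfrak a}:\overline I$ and the regularity of $l$ on $S/J$. The main obstacle is the middle step: one must extract from the ABW complex the sharp length bound $\projdim_S(S/I^{2})\leq 4$ in the specific $2\times 4$ setting, and then verify that the hypothesis $l\notin(x_i,y_i)$ genuinely suffices to preserve this bound after specialization.
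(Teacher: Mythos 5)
Your overall plan---run the argument of Theorem \ref{general specialization} and replace the Raicu--Weyman--Witt input by the Akin--Buchsbaum--Weyman resolution of $I^2$---is the right one, and your reduction to a single element $l$ and your concluding steps (the $\Hom$-identifications via $\omega_A$ and the unmixedness argument) match the paper. But the pivotal step fails as you have stated it. You propose to prove $\Ext^4_S(I^2,S)=0$ by extracting from the ABW complex the bound $\projdim_S(S/I^2)\leq 4$. That bound is false: for the $2\times 4$ generic matrix the ABW resolution of $I^2$ is linear of length $4$ with last Betti number $1$, so $\projdim_S(S/I^2)=5$ and
\begin{equation*}
\Ext^5_S(S/I^2,S)\cong\Ext^4_S(I^2,S)\cong S/(x_1,\dots,x_4,y_1,\dots,y_4)\neq 0.
\end{equation*}
This nonvanishing is precisely why $n=4$ cannot simply be fed into Theorem \ref{general specialization} and needs a separate lemma; if the Ext module vanished, the extra hypothesis $l_j\notin(x_1,\dots,x_4,y_1,\dots,y_4)$ would be superfluous, whereas it is in fact the crux.

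The repair is to notice that exactness of the sequence (\ref{homEq-4}) does not require $\Ext^1_A(I^{2}A,\omega_A)=0$: the long exact sequence obtained by applying $\Hom_A(I^2A,-)$ only requires that multiplication by $l$ be \emph{injective} on $\Ext^1_A(I^{2}A,\omega_A)\cong\Ext^4_S(I^2S_3,S)$. Using the vanishings $\Ext^3_S(IS_1,S)=\Ext^3_S(IS_2,S)=0$ (which do follow from \cite{RWW} and the argument of \cite{CEU}) and dualizing the sequences $C_{i,2}$, one embeds $\Ext^4_S(I^2S_3,S)\hookrightarrow\Ext^4_S(I^2,S)\cong S/(x_1,\dots,x_4,y_1,\dots,y_4)$, and the hypothesis $l\notin(x_1,\dots,x_4,y_1,\dots,y_4)$ makes $l$ a nonzerodivisor on this cyclic module, hence on the submodule, giving the needed injectivity. (Your aside on $n=3$ also has the wrong numerology: here $h=\height I_2(M)=3$ for the $2\times 4$ matrix, so $n=3$ is the base case $n=h$, where $\mathfrak a$ is a complete intersection and one concludes from \cite[Proposition 2.8]{HU-old}; it is not a grade-two situation.)
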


  \begin{proof}

  Without loss of generality, we may assume $S$ and $R$ have infinite residue fields by attaching indeterminates onto $S$ and $R$, then localizing. Note that $I$ is a complete intersection on the punctured spectrum of $S$, and thus $I$ is $G_{8}$. Also note that $I$ is Cohen-Macaulay \cite[Corollary 2.8]{BV}.

  Let $\underline{l}_k = l_1,\dots,l_k$, and define $\pi_k$ to be the natural surjections $S \rightarrow S/(\underline{l}_k)$. By \cite[Theorem 22.6]{M2} $S/(\underline{l}_k)$ is flat over $Z$. By \cite[Lemma 4.1]{HU}, we have that $\height(\pi_k(\mathfrak{a}):\pi_k(I))\geq n$. So, we may reduce to the case where $\underline{l}$ is a single element. Let said element be $l$. Thus, we can reduce to $R = S/(l)$.

  Suppose that $n = 3$. By assumption we have that $\mu(\mathfrak{a}) = 3$. As $\pi(\mathfrak{a}):\pi(I)$ is an $3$-residual intersection, the height of $\height(\pi(\mathfrak{a})) = 3$. So, we have the inequality $3 \geq \height(\mathfrak{a}) \geq \height(\pi(\mathfrak{a})) = 3$. This implies that $\mathfrak{a}$ is a complete intersection ideal, and thus unmixed. Since $\height(\mathfrak{a}) = \height(\pi(\mathfrak{a}))$, $l$ is not contained in any minimal prime of $\mathfrak{a}$, and thus is not contained in any associated prime of $\mathfrak{a}$. Thus $l$ is regular on $S/\mathfrak{a}$, and (1) and (2) follow from \cite[Proposition 2.8]{HU-old}. 

  Now suppose $n = 4 $. Let $\overline{\cdot}$ represent images mod $l$. Note that, as $\overline{I}$ is $G_{8}$, by \cite[Corollary 1.6a]{U} we can select general elements $\underline{a} = a_1,\dots,a_4$  that are generators of $\mathfrak{a}$ such that for $\mathfrak{a}_k = (a_1,\dots,a_k)$, $\overline{\mathfrak{a}_k}:\overline{I}$ is a geometric $k$-residual intersection for all $k <4$. For notational purposes write $J_k = \mathfrak{a}_k : I$. Note, $\overline{J_3} = \overline{\mathfrak{a}_3}:\overline{I}$. As for all $k<4$, $\height(J_{k}) \geq \height(\overline{J_k})$ and  $\height(I + J_{k}) \geq \height(\overline{I+J_k})$, $J_k$ is a geometric $k$-residual intersection. 

  Let $S_k = S/J_k$ and $A = S_{3}$. Note that $A$ is Cohen-Macaulay \cite{PS}. Let $\omega_A$ be the canonical module of $A$. Note that if an ideal is contained in $S$, $\overline{\cdot}$ represents images in $R$, and if an ideal is contained in $A$, $\overline{\cdot}$ represents images in $\overline{A} = A/lA$.

  Note, that by induction $l$ is regular on $A$ and thus we have the exact sequence
  \begin{equation*}
    0 \rightarrow \omega_A \xrightarrow{\cdot l} \omega_A \rightarrow \omega_A/l\omega_A \rightarrow 0.
  \end{equation*}

  We will first show that, after applying the functor $\Hom_A(I^{2}A,-)$, we have the short exact sequence
  \begin{equation}\label{homEq-4}
    0 \rightarrow \Hom_A(I^{2}A,\omega_A) \xrightarrow{\cdot l} \Hom_A(I^{2}A,\omega_A) \rightarrow \Hom_A(I^{2}A,\omega_A/l\omega_A) \rightarrow 0.
  \end{equation}

  Note that after applying $\Hom_A(I^2A,-)$, we have the long exact sequence

  \begin{equation*}
    \dots \rightarrow \Hom_A(I^{2}A,\omega_A/l\omega_A) \rightarrow \Ext_A^1(I^{2}A,\omega_A)\xrightarrow{\phi}  \Ext_A^1(I^{2}A,\omega_A) \rightarrow \dots
  \end{equation*}

  and so it enough to show that $\phi$ is injective. Note that $\phi$ is just multiplication by $l$.

  As $A$ is Cohen-Macaulay \cite{PS}, we have that $\Ext_A^1(I^{2}A,\omega_A) \cong \Ext_S^{4}(I^{2}A,\omega_S)$ and since $S$ is Gorenstein, $\omega_S \cong S$.

  Note, since $I$ is Cohen-Macaulay it satisfies $AN_3^-$ \cite{PS}. So, from \cite[Lemma 2.7]{U}, setting $r=1$, we have the exact sequences

  \begin{equation*}
    C_{i,j}: 0 \rightarrow I^{j-1}S_{i-1} \rightarrow I^{j}S_{i-1} \rightarrow I^{j}S_{i} \rightarrow 0
  \end{equation*}

  for $1 \leq i \leq 4$ and $j \geq 2$.

  Note that $\dim(S) - \dim(S/I) = 3$ and  $\dim(S)-\depth(S/I) = 3$. As $S$ is Gorenstein, $0 = \Ext^i_S(S/I, S) \cong \Ext_S^{i-1}(I,S)$ for all $i \neq 3$. In particular $\Ext_S^3(I,S) = \Ext_S^4(I,S) = 0$. Applying $\Hom_S(-,S)$ to $C_{1,2}$ gives us $\Ext_S^4(I^2S_1,S) \cong \Ext_S^4(I^2,S)$.

  From \cite[Theorem 4.3]{RWW} and the proof of \cite[Theorem 4.1]{CEU}, we have $\Ext_S^{3+j-1}(I^jS_k,S)=0$ whenever $k-1\leq j \leq 1$ and $0 \leq k \leq 2$. Here we will not reprove the relevant results, as they are made explicit in the proof of Theorem \ref{general specialization}.

  So we have that $\Ext_S^3(IS_1,S) = \Ext_S^3(IS_2,S) = 0$. Applying $\Hom_S(-,S)$ to $C_{2,2}$ and $C_{3,2}$ we get that $\Ext_{S}^4(I^2S_2,S)\hookrightarrow \Ext_S^4(I^2S_1,S)$ and $\Ext_{S}^4(I^2S_3,S)\hookrightarrow \Ext_S^4(I^2S_2,S)$. So $\Ext_{S}^4(I^2S_3,S)\hookrightarrow \Ext_S^4(I^2,S)$.

  By \cite{ABW} we have that the resolution of $I^2$ is linear, that the last Betti number is $1$ and that the projective dimension of $I^2$ is $4$. Computing $\Ext$ gives us $\Ext_S^4(I^2,S) \cong S/(x_1,\dots,x_4,y_1,\dots,y_4) \cong \mathcal{B}$. Since $l_1$ a non-zero-divisor in $S$ and not contained in the ideal $(x_1,\dots,x_4,y_1,\dots,y_4)$, it is a nonzero divisor on $S/(x_1,\dots,x_4,y_1,\dots,y_4)$. Thus $\phi$ is injective and we are done.

  Since $\Ass(R/J_3) \subseteq \Ass(R/\mathfrak{a}_3)$ and $\mathfrak{a_3}$ is a complete intersection, $J_{3}$ is unmixed of height $3$. As $J_{3}$ and $I$ are geometrically linked, $I$ is not contained in any associated prime of $J_{3}$. So, localizing at any associated prime, $p$, of $A$, which is the image of an associated prime of $J_{3}$ in $A$, we get that $(IA)_p = A_p$. Thus, locally, at every associated prime of $A$, the map $IA\otimes_A\omega_A \rightarrow I\omega_A$ is an isomorphism. Thus the kernel of $IA\otimes_A\omega_A \rightarrow I\omega_A$ is an $A$-torsion module. So,
  \begin{equation*}
    \Hom_A(I\omega_A,\omega_A) \cong \Hom_A(IA\otimes_A\omega_A,\omega_A).
  \end{equation*}

  Now we prove that: \begin{eqnarray}\label{eq2-4}
    \Hom_{A}(I^{2}A,\omega_A) &\cong& \mathfrak{a}A : IA.
  \end{eqnarray}

  Note $\omega_A \cong IA$ \cite{PS}. So we have
  \begin{eqnarray*}
    \Hom_{A}(I^{2}A,\omega_A) &\cong& \Hom_{A}(I\omega_{A}, \omega_{A}) \\
    &\cong& \Hom_{A}(IA\otimes_A\omega_{A}, \omega_{A})\\
    &\cong& \Hom_{A}(IA, \Hom_{A}(\omega_{A},\omega_{A}))\\
    &\cong& \Hom_{A}(IA, A)\\
    &\cong& \mathfrak{a}A : IA.
  \end{eqnarray*} The fourth isomorphism follows from the fact that $A$ is $S_2$ and the fifth holds because $\mathfrak{a}A$ is generated by a single nonzero divisor element on $A$.

  Note that $\overline{A} \cong \overline{S}/\overline{J_{3}}$. We have that $\overline{J_{3}} \cong \overline{\mathfrak{a}_{3}}:\overline{I}$, and $\omega_{\overline{A}} \cong \overline{I}\overline{A}$ \cite{PS}. By the same reasoning as above,
  \begin{equation*}
    \Hom_{\overline{A}}(\overline{I}\omega_{\overline{A}},\omega_{\overline{A}}) \cong \Hom_{\overline{A}}(\overline{I}\overline{A}\otimes_{\overline{A}}\omega_{\overline{A}},\omega_{\overline{A}}).
  \end{equation*}

  As $A$ is Cohen-Macaulay and $l$ is regular on $A$, we have that $\omega_{A}/l\omega_A \cong \omega_{\overline{A}} \cong \Hom_{\overline{A}}(\overline{A}, \omega_{\overline{A}})$.

  There exists a natural surjection $I^{2}A/lI^{2}A \rightarrow \overline{I^{2}}\overline{A}$. Note that $(IA+lA)/lA = \overline{I}\overline{A}$ and that $\height(\overline{I}\overline{A}) \geq 1$ because $\overline{J_{3}}$ and $\overline{I}$ are geometrically linked. Since $l$ is regular on $A$, we have that $\height(IA+lA) \geq 2$, which implies that locally at every minimal prime of $lA$ and for every integer $j$, $I^jA \cap lA = lI^jA$. Note because $\overline{A}$ is Cohen-Macaulay, the associated pimes of $\overline{A}$ are all minimal, thus the kernal of $I^{2}A/lI^{2}A \rightarrow \overline{I^{2}}\overline{A}$ is an $\overline{A}$-torsion module. So,
  \begin{eqnarray}\label{eq3-4}
    \Hom_{\overline{A}}(I^{2}A/lI^{2}A,\omega_{\overline{A}})
    &\cong& \Hom_{\overline{A}}(\overline{I^{2}}\overline{A},\omega_{\overline{A}}).
  \end{eqnarray}\\

  We now prove that: \begin{eqnarray}\label{eq44-4}
    \Hom_A(I^{2}A,\omega_A/l\omega_A)
    &\cong& \overline{\mathfrak{a}}\overline{A} : \overline{I}\overline{A}.
  \end{eqnarray}

  We have that
  \begin{eqnarray*}
    \Hom_A(I^{2}A,\omega_A/l\omega_A) &\cong& \Hom_A(I^{2}A,\Hom_{\overline{A}}(\overline{A}, \omega_{\overline{A}}))\\
    &\cong&   \Hom_{\overline{A}}(I^{2}A/lI^{2}A,\omega_{\overline{A}})\\
    &\cong& \Hom_{\overline{A}}(\overline{I^{2}}\overline{A},\omega_{\overline{A}})\\
    &\cong& \overline{\mathfrak{a}}\overline{A} : \overline{I}\overline{A}.
  \end{eqnarray*}
  The third isomorphism follows from (\ref{eq3-4}). The forth isomorphism follows from the same argument as the isomorphism $\Hom_{A}(I^{2}A,\omega_A) \cong \mathfrak{a}A : IA$.

  Applying (\ref{eq2-4}) and (\ref{eq44-4}) to exact sequence (\ref{homEq-4}) we have the exact sequence
  \begin{equation*}
    0 \rightarrow \mathfrak{a}A : IA \xrightarrow{l} \mathfrak{a}A : IA \xrightarrow{\pi}  \overline{\mathfrak{a}}\overline{A} : \overline{I}\overline{A} \rightarrow 0.
  \end{equation*}
  Thus $\overline{J} = \overline{\mathfrak{a}}:\overline{I}$.

  It remains to show that $l$ is regular on $S/J$. By \cite[Proposition 1.7]{U} $J$ is unmixed of height $n$. Note that $\pi$ induces a surjective map $S/J \rightarrow \overline{S}/\overline{J}$. As the height of $\overline{J}$ is at least $n$ and since all associated primes of $J$ have height $n$, $l$ is none of these associated primes, therefore $l$ is regular on $S/J$. \end{proof}

Now we prove our main result for the special case of generic residual intersections. Let $\mathbf{k}$ be a field of characteristic 0. For $n \geq 4$, let $r = \binom{n}{2}$, let $R = \mathbf{k}[x_1,\dots,x_n,y_1,\dots,y_n,z_1,\dots,z_{n}]$, and let $S = \mathbf{k}[x_1,\dots,x_n,y_1,\dots,y_n][z_{i,j} \;\vert \; 1 \leq i \leq r, 1 \leq j \leq n]$.

Let $M = \begin{pmatrix}
  x_1 \dots x_n \\
  y_1 \dots y_n
\end{pmatrix}$. Let $I = I_2(M) = (g_1,\dots,g_{r}) \subseteq S$ be an ideal where $g_1,\dots,g_{r}$ are binomials, $g_1 = \Delta_{2,1}$, $g_n = \Delta_{n,n-1}$  and, for $2 \leq i \leq n-1$, $g_i = \Delta_{i+1,i-1}$.
Define $h = \height(I) = n-1$. Let $B = (z_{i,j})$ be a generic $r \times n$ matrix, let $[a_1,\dots,a_n] = [g_1,\dots,g_r]B$, and let $\mathfrak{a} = (a_1,\dots,a_n)$. The ideal $J = \mathfrak{a}:I$ is a generic residual intersection as defined by Huneke and Ulrich in \cite[Definition 3.1]{HU}, and, as proved in the same paper, $J$ is a geometric $n$-residual intersection \cite[Theorem 3.3]{HU}.

\begin{lem}\label{MainLem}
  The residual intersection $J$ is equal to $\sum_{i=1}^{n} (a_1,\dots,\widehat{a}_i,\dots,a_n):I$.
\end{lem}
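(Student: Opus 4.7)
The plan is to deduce the equality from Theorem \ref{sum of links} via a specialization argument. Define the graded $\mathbf{k}[x_1,\dots,x_n,y_1,\dots,y_n]$-algebra surjection $\pi\colon S\twoheadrightarrow R$ by $\pi(z_{j,j})=z_j$ for $1\le j\le n$ and $\pi(z_{i,j})=0$ otherwise. Then $\pi(a_j)=z_j g_j$, so $\pi$ sends the generic data $(I,\mathfrak{a})$ onto the special-case data of Notation \ref{gens of a}; write $J^{\mathrm{sp}}$ and $J_i^{\mathrm{sp}}$ for the ideals $J$ and $J_i$ of Section \ref{Sum of Links}. The kernel $K=\ker\pi$ is generated by the variables $z_{i,j}$ that $\pi$ kills, hence by a regular sequence of positive-degree indeterminates disjoint from $(x_1,\dots,y_n)$.

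After localizing $S$ and $R$ at their homogeneous maximal ideals---harmless because equality of homogeneous ideals in a $\mathbf{k}$-graded polynomial ring is detected at the irrelevant maximal ideal---the map $\pi$ satisfies the hypotheses of Corollary \ref{geq5} (for $n\ge 5$) or Lemma \ref{eq4} (for $n=4$), with $\mathcal{A}=\mathbf{k}[z_1,\dots,z_n]$ and $\mathcal{B}=\mathbf{k}[z_{i,j}]$. Since $\pi(\mathfrak{a}):\pi(I)=J^{\mathrm{sp}}$ is an $n$-residual intersection by Lemma \ref{is Res Int}, the result gives $\pi(J)=J^{\mathrm{sp}}$ together with the key assertion that the generators of $K$ form a regular sequence on $S/J$. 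For each $J_i$, the ideal $\mathfrak{a}_i$ is a regular sequence in $S$ whose image $\pi(\mathfrak{a}_i)$ is again a regular sequence by Lemma \ref{a link}; the complete-intersection base case ($n=h$) of the proof of Theorem \ref{general specialization}, which reduces to \cite[Proposition~2.8]{HU-old}, then yields $\pi(J_i)=J_i^{\mathrm{sp}}$. Combined with Theorem \ref{sum of links},
\begin{equation*}
\pi(J)=J^{\mathrm{sp}}=\sum_{i=1}^n J_i^{\mathrm{sp}}=\pi\!\Bigl(\sum_{i=1}^n J_i\Bigr),
\end{equation*}
which translates to $J\subseteq \sum_{i=1}^n J_i+K$ in $S$.

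To upgrade this containment to an equality, use that $K$ is a regular sequence on $S/J$: a standard Koszul computation shows $K\cap J=KJ$, since any element of $K\cap J$ has its coefficients successively pushed into $J$ using the regular-sequence hypothesis. Writing any $f\in J$ as $f=g+k$ with $g\in\sum_i J_i$ and $k\in K$ gives $k=f-g\in K\cap J=KJ$, so $f\in \sum_i J_i+KJ$. Hence the finitely generated graded $S$-module $J/\sum_i J_i$ satisfies $J/\sum_i J_i=K\cdot(J/\sum_i J_i)$, and since $K$ lies in the maximal graded ideal $S_+$, graded Nakayama forces $J=\sum_i J_i$. The main obstacle is not this concluding Nakayama argument but the preceding specialization step: one must verify the residual-intersection hypothesis for the target (via Lemma \ref{is Res Int}) and separately handle each link $J_i$ using the complete-intersection base case of the general specialization machinery.
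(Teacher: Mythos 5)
Your proposal is correct and follows essentially the same route as the paper: specialize via the map $z_{i,i}\mapsto z_i$, $z_{i,j}\mapsto 0$, invoke Corollary \ref{geq5}/Lemma \ref{eq4} to commute the colons with $\pi$ (for $J$ and for each link $J_i$), identify the images with the special case of Theorem \ref{sum of links}, and then lift the equality back using that $\ker\pi$ is a regular sequence on $S/J$ together with Nakayama. Your extra care in justifying $\pi(J_i)=J_i^{\mathrm{sp}}$ through the complete-intersection base case, and in spelling out $K\cap J=KJ$, only makes explicit what the paper leaves implicit.
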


\begin{proof}
Since all the ideals we are dealing with are homogeneous, without loss of generality, we may localize $R$ and $S$ at their homogeneous maximal ideals.

Define $\pi: S\rightarrow R$ as the map that sends $z_{i,i} \mapsto z_i$ for all $i$ and $z_{i,j} \mapsto 0$ for all $i \neq j$. Let $\mathfrak{a}_i = (a_1, \dots, \widehat{a_i}, \dots, a_n)$, $J_i = \mathfrak{a}_i:I$ and $J' = \sum_{i=1}^n J_i$. By Corollary \ref{geq5} and Lemma \ref{eq4} we have that $\pi(J) = \pi(\mathfrak{a}):\pi(I)$ and that $\pi(J') = \sum_{i=1}^n\pi(\mathfrak{a}_i):\pi(I)$. Note that these images are the sum of links and residual intersection from Section \ref{Sum of Links}, so by Theorem \ref{sum of links} we have that $\pi(J) = \pi(J')$.

Note that $\ker(\pi)$ is generated by regular sequence in $S$ and by Corollary \ref{geq5} and Lemma \ref{eq4} it is generated by a regular sequence in $S/J$. Note $J' \subseteq J$ and, since $\pi(J) = \pi(J')$, $J \subseteq J' + \ker(\pi)$.

So we have: $J = (J' + \ker(\pi))\cap J = J' +  \ker(\pi)\cap J = J' +  \ker(\pi)J$, where the last equality holds because $\ker(\pi)$ is generated by a regular sequence in $S/J$. So, by Nakayama's lemma, $J = J'$. \end{proof}

Now, we are finally ready to prove our main result. Let $\mathbf{k}$ be a field of characteristic 0. For an integer $n \geq 4$, let $R$ be $\mathbf{k}[x_1,\dots,x_n,y_1,\dots,y_n]$ localized at its homogeneous maximal ideal, let $M = \begin{pmatrix}
  x_1 \dots x_n \\
  y_1 \dots y_n
\end{pmatrix}$ be a $2\times n$ generic matrix, and let $I \subsetneq R$ be the ideal generated by the $2\times 2$ minors of $M$.

\mainResult

\begin{proof}
  This follows from Corollary \ref{geq5}, Lemma \ref{eq4} and Lemma \ref{MainLem}. \end{proof}

\bibliographystyle{amsplain}
\bibliography{Bibliography}

\end{document}